\theoremstyle{plain}
\newtheorem{thm}{Theorem}[section]
\newtheorem{lemma}[thm]{Lemma}
\newtheorem{prop}[thm]{Proposition}
\newtheorem{cor}[thm]{Corollary}
\theoremstyle{definition}
\newtheorem{dfn}[thm]{Definition}
\newtheorem{rem}[thm]{Remark}
\newtheorem*{thm*}{Theorem}
\numberwithin{equation}{section}
\renewcommand{\subsubsection}{\@startsection{subsubsection}{3}
	{\z@}{1.3ex \@plus .8ex \@minus .2ex}{-0.5em}%
	{\normalfont\bfseries}}
\DeclareMathAlphabet{\mathesstixfrak}{U}{esstixfrak}{m}{n}
\DeclareFontFamily{U}{Tempora-TLF}{}
\DeclareFontShape{U}{Tempora-TLF}{m}{it}{<-> Tempora-Italic-TLF-lgr}{}
\newcommand{\bR}{\mathbb{R}}
\newcommand{\SO}{\mathsf{SO}}
\newcommand{\sL}{\mathsf{L}}
\newcommand{\tL}{\mathtt{L}}
\newcommand{\GL}{\mathsf{GL}}
\newcommand{\M}{\mathrm{M}}
\newcommand{\U}{\mathsf{U}}
\newcommand{\bP}{\mathbb{P}}
\newcommand{\tM}{\mathtt{M}}
\newcommand{\rT}{\mathrm{T}}
\newcommand{\sd}{\mathsf{d}}
\newcommand{\tR}{\mathtt{R}}
\newcommand{\tH}{\mathtt{H}}
\newcommand{\Z}{\mathtt{Z}}
\newcommand{\rL}{\mathrm{L}}
\newcommand{\sP}{\mathsf{P}}
\newcommand{\sG}{\mathsf{G}}
\newcommand{\PSL}{\mathsf{PSL}}
\newcommand{\fk}{\mathesstixfrak}
\newcommand{\Hom}{\mathrm{Hom}}
\newcommand{\Th}{\mathrm{Th}}
\newcommand{\HR}{\mathrm{HR}}
\newcommand{\Top}{\mathrm{top}}
\newcommand{\BM}{\mathrm{BM}}
\title{Thurston's asymmetric metric on Margulis spacetimes }
\author{Krishnendu Gongopadhyay} 
\address{Indian Institute of Science Education and Research (IISER) Mohali, Knowledge City,  Sector 81, S.A.S. Nagar 140306, Punjab, India}
\email{krishnendu@iisermohali.ac.in}
\author{Neelanjan Mondal}
\address{Indian Institute of Science Education and Research (IISER) Mohali, Knowledge City,  Sector 81, S.A.S. Nagar 140306, Punjab, India}
\email{mondalneelanjan@gmail.com}
\subjclass[2020]{Primary 22E40; Secondary 53C60, 37D35, 20F67}
\keywords{domain of discontinuity, Anosov representaion, pullback bundles, space of connections, categorical structure}
\date{\today}
\thanks{}
\begin{document}
		
		\begin{abstract}

        In this article, we extend Thurston's asymmetric metric and the associated Finsler norm, originally defined for Teichmüller space, to the setting of Margulis spacetimes. We also establish several convexity properties of both the asymmetric metric and the corresponding Finsler norm.

		\end{abstract}
		
		\maketitle

		\section{Introduction}

        Teichmüller theory provides one of the classical settings in which geometric structures on a manifold can be parametrized by a space of group representations.  
For a closed surface $S$, the Teichmüller space $\mathrm{Teich}(S)$ identifies with a connected component of the character variety
$$
\mathcal{X}(\pi_1(S),\PSL_2(\mathbb R))
    = \Hom(\pi_1(S),\PSL_2(\mathbb R))/\PSL_2(\mathbb R),
$$
and every point of $\mathrm{Teich}(S)$ corresponds to a Fuchsian representation encoding the hyperbolic geometry on $S$.

For a nontrivial element $\gamma\in\pi_1(S)$, the translation length
$$
l_\rho(\gamma)
    =\inf_{x\in\mathbb H^2} d_{\mathbb H^2}(x,\rho(\gamma)x)
$$
records the length of the closed geodesic on the hyperbolic surface $\mathbb H^2/\rho(\pi_1(S))$ represented by $\gamma$.  Thurston's asymmetric metric, introduced in his study of minimal stretch maps between hyperbolic surfaces~\cite{thurston1998minimal}, has become one of the most influential geometric structures on Teichm\"{u}ller space. Thurston wanted a metric that reflects the geometry of how one hyperbolic structure can be deformed into another by stretching along a measured lamination. Thurston showed that comparing the length spectrum for two hyperbolic structures yields an asymmetric metric on $\mathrm{Teich}(S)$,
 \[
    d_\Th(\rho, \varrho):= \sup_{[\gamma] \in [\pi_1(S)]} \log \left(\frac{l_{\rho}(\gamma)}{l_{\varrho}(\gamma)}\right).
        \] 
We refer to the surveys \cite{pasu}, \cite{pasu2} for more details on Thurston's metric on surfaces. 

    More recently, the Thurston's asymmetric metric construction has been generalized in higher Teichmüller theory. Carvajales, Dai, Pozzetti and Wienhard \cite{carvajales2024thurston} have  extended Thurston’s construction to Anosov representations of word hyperbolic groups into semisimple Lie groups.  Therefore, a natural question is whether one can develop an analogue of the Thurston asymmetric metric in the \emph{non-semisimple} setting.  
In this article we answer this question for an important class of geometric structures arising from the  {Margulis spacetimes}.

Margulis~\cite{margulis1983free, margulis1987complete} constructed free, non-abelian 
subgroups of $\GL_3(\mathbb{R}) \ltimes \mathbb{R}^3$ acting properly discontinuously 
and freely on $\mathbb{R}^3$, yielding the first counterexamples to the Auslander 
conjecture. Fried and Goldman~\cite{fried1983three} subsequently showed that the 
Zariski closure of any such group is conjugate to $\SO(2,1)$. Later, Abels--Margulis--Soifer~\cite{abels2002zariski} constructed Zariski-dense free 
subgroups of $\SO(n+1,n)\ltimes \mathbb{R}^{2n+1}$ that act properly on 
$\mathbb{R}^{2n+1}$ precisely when $n$ is odd. The geometric manifolds obtained in 
this way are commonly referred to as \emph{Margulis spacetimes}.

Recall that $\SO(n+1, n)$ is the group of isometries of the real non-degenerate qadratic form $\langle \cdot ~|~\cdot \rangle$ of signature $(n+1, n)$.  We consider Margulis spacetimes modeled on
$$
\sG:=\SO^0(n+1,n)\ltimes\mathbb R^{2n+1}.
$$
Let \(\sG^\tL := \SO^0(n+1,n)\), the identity component of $G$. 

Let $\Gamma$ is word hyperbolic group, and let  $\rho : \Gamma \rightarrow \sG$ be an injective homomorphism such that the linear part $\tL_{\rho} \in \Hom(\Gamma, \sG^\tL, \sP^\pm)$. Then $\rho$ is called a \textit{Margulis spacetime} if and only if $\rho(\Gamma)$ acts properly on $\bR^{2n+1}$. The \emph{linear part} of $\rho$: $\tL_\rho: \Gamma \to \SO^0(n+1,n)$ 
    describes how elements of $\Gamma$ act by linear transformations on $\mathbb{R}^{2n+1}$. 
    We assume that $\tL_\rho$ is \emph{Zariski dense} and \emph{Anosov}. We denote the space of {Margulis spacetimes} by $\Hom_\M(\Gamma, \sG)$.

Associated to such a representation $\rho$,   the Margulis invariant $\alpha_{\rho}$  which measures the signed translation along the affine axis of each group element. Margulis invariant detects proper affine action, for example, see \cite{margulis1983free}, \cite{gold}, \cite{goldman2009proper}. 
    
We say an element $g \in \SO(n+1,n)$ is \textit{pseudo-hyperbolic} if and only if $-1$ is not an eigenvalue of $g$ and it has an eigenvalue $1$ with geometric multiplicity one.   An  element $(g, u) \in \SO(n+1,n)\ltimes \bR^{2n+1}$ is called \textit{pseudo-hyperbolic} if the  linear part $g$ is pseudo-hyperbolic.  Now for a pseudo-hyperbolic element $(g,u) \in \SO(n+1,n) \ltimes \bR^{2n+1}$, the stable and unstable subspaces are defined by   
$$V^g_{\pm} :=\{v \in \bR^{2n+1} 
 \mid \lim_{k \to \infty} g^{\mp k} v=0\}.$$ These are maximal isotropic subspaces of $\bR^{n+1,n}$. Let $v_0^g$ be  the  unique eigenvector of $g$ with eigenvalue $1$ such that $\langle v_0^g \mid v_0^g \rangle = 1$ and is positively oriented with respect to the positive orientation of $V_+^g$ and $(V_+^g)^\perp \cap \bR^{n+1,n}$. The \textit{Margulis invaiant} of $g$ is defined by $$\alpha(g,u) := \langle u \mid v_0^g \rangle.$$ 

The above definition for Margulis invariant has been reformulated for representations $\rho: \Gamma \to G$. We recall the definition from  \cite{ghosh2023affine}.  First recall the notion of \emph{neutral section.}

If $\rho:\Gamma\to G$ has Anosov linear part $L_\rho$ with limit map
$\xi_{L_\rho}$ and $\nu$ denotes the neutral map, the \emph{neutral section} is
\[
\nu_\rho := \nu \circ \xi_{L_\rho} : \partial_\infty\Gamma^{(2)} \to \mathbb{R}^{2n+1}.
\]

  \begin{dfn}  Suppose $\rho : \Gamma \to \sG$ be an injective homomorphism such that $\tL_{\rho} \in \Hom(\Gamma, \sG^\tL, \sP^\pm)$. Then for any infinite order element $\gamma \in \Gamma$, the \textit{Margulis invariant} is given by 
    \[
    \alpha_{\rho} (\gamma) = \langle u_{\rho}(\gamma) \mid \nu_{\rho}(\gamma^+, \gamma^-)\rangle,
    \]
    where $\gamma^-, \gamma^+$ are the repelling and attracting fixed points of $\gamma$, respectively, and,  $\nu_{\rho} (\gamma^+, \gamma^-): = \nu \circ \xi(\gamma^+, \gamma^-,t)$, with $\nu$ is the neutral map. 
\end{dfn}

The moduli space of such representations, up to conjugation in $\sG$, is
$$
\M(\Gamma, \sG) := \Hom_\M(\Gamma, \sG) / \sG.
$$
For any positive $k$, we define the analytic hypersurface
$$
\M_k(\Gamma, \sG) := \M(\Gamma, \sG) \cap h^{-1}(k),
$$
where $h: \M(\Gamma ,\sG) \to \mathbb{R}_{>0}$ is the \emph{entropy functional}, which measures the dynamical complexity of the representation. By \cite{ghosh2018pressure}, the space $\M(\Gamma,\sG)$ is an analytic manifold. 
In addition, \cite{ghosh2023margulis} shows that the entropy map
\[
\rho \longmapsto h_\rho
\]
is differentiable, and likewise the  function
\[
\rho \longmapsto \alpha_\rho(\gamma)
\]
is differentiable for each $\gamma \in \Gamma$. Thus, both the global dynamical invariant $h_\rho$ and the function $\alpha_{\rho}$  vary smoothly along $\M(\Gamma,\sG)$.

In this article, our aim is to offer an analogue of Thurston's asymmetric metric on $\M(\Gamma, \sG)$. 
Since $\SO^0(n+1,n)\ltimes\mathbb R^{2n+1}$ is non-semisimple, the results of~\cite{carvajales2024thurston} do not apply directly; nevertheless, the underlying strategy inspires our construction.  
Our first main result establishes a Thurston-type asymmetric metric on this moduli space.

        \begin{thm}
            The function $d_\Th: \M(\Gamma, \sG) \times \M(\Gamma, \sG) \rightarrow \bR \cup \{\infty\}$ defined by  $$d_{\Th}(\rho, \varrho) := \log\left(\sup_{[\gamma] \in [\Gamma_\tH]} \frac{h_{\rho}}{h_{\varrho}} \frac{\alpha_{\rho}(\gamma)}{\alpha_{\varrho}(\gamma)}\right).$$
            is an  asymmetric metric on $\M(\Gamma, \sG)$.
        \end{thm}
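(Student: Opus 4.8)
The plan is to verify the three defining properties of an asymmetric metric: (i) $d_\Th(\rho,\varrho)\geq 0$ with equality if and only if $\rho=\varrho$ (as points in the moduli space, i.e.\ up to conjugation); (ii) the triangle inequality $d_\Th(\rho,\varsigma)\leq d_\Th(\rho,\varrho)+d_\Th(\varrho,\varsigma)$; and (iii) possibly the finiteness/well-definedness discussion, since the codomain explicitly includes $\infty$. The quantity inside the logarithm is a supremum over conjugacy classes $[\gamma]$ of infinite-order elements of $\frac{h_\rho}{h_\varrho}\cdot\frac{\alpha_\rho(\gamma)}{\alpha_\varrho(\gamma)}$; note the entropy ratio $h_\rho/h_\varrho$ is a constant independent of $\gamma$ and may be pulled out of the supremum, so $d_\Th(\rho,\varrho)=\log\frac{h_\rho}{h_\varrho}+\log\sup_{[\gamma]}\frac{\alpha_\rho(\gamma)}{\alpha_\varrho(\gamma)}$. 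This decomposition is the key structural observation that makes everything work.

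For the triangle inequality, I would use the multiplicative cocycle-like structure: for each fixed $\gamma$, $\frac{\alpha_\rho(\gamma)}{\alpha_\varsigma(\gamma)}=\frac{\alpha_\rho(\gamma)}{\alpha_\varrho(\gamma)}\cdot\frac{\alpha_\varrho(\gamma)}{\alpha_\varsigma(\gamma)}$, and the entropy ratios similarly telescope: $\frac{h_\rho}{h_\varsigma}=\frac{h_\rho}{h_\varrho}\cdot\frac{h_\varrho}{h_\varsigma}$. Hence the expression inside the supremum for $d_\Th(\rho,\varsigma)$ factors, and since $\sup_\gamma(a_\gamma b_\gamma)\leq(\sup_\gamma a_\gamma)(\sup_\gamma b_\gamma)$ when all terms are positive, applying $\log$ (which is monotone) yields the triangle inequality. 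The positivity of each $\alpha_\rho(\gamma)$ — needed both for this supremum estimate and for the logarithm to be defined — should follow from $\rho$ being a Margulis spacetime: properness of the affine action forces all Margulis invariants of infinite-order elements to have the same sign (this is essentially the opposite sign lemma / Margulis's properness criterion), and one normalizes so that sign is positive; I would cite \cite{goldman2009proper} or \cite{ghosh2023affine} for this.

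The main obstacle is the nondegeneracy statement: showing $d_\Th(\rho,\varrho)=0\iff\rho=\varrho$ in $\M(\Gamma,\sG)$. The "$\Leftarrow$" direction is immediate since all ratios become $1$. For "$\Rightarrow$", one needs that $d_\Th(\rho,\varrho)=0$ forces, first, $\frac{\alpha_\rho(\gamma)}{\alpha_\varrho(\gamma)}\leq\frac{h_\varrho}{h_\rho}$ for all $\gamma$ with the reverse bound from considering $d_\Th(\varrho,\rho)\geq 0$ — wait, more carefully: $d_\Th(\rho,\varrho)=0$ alone gives only $\sup_\gamma\frac{\alpha_\rho(\gamma)}{\alpha_\varrho(\gamma)}=\frac{h_\varrho}{h_\rho}$, so I would combine this with the nonnegativity of $d_\Th(\varrho,\rho)$ to extract a two-sided comparison, and then argue that the Margulis invariant spectrum $\{\alpha_\rho(\gamma):[\gamma]\in[\Gamma_\tH]\}$, suitably rescaled, determines $\rho$ up to conjugacy. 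This rigidity is the crux and is where I expect to invoke an analogue, in the Margulis-spacetime setting, of the fact that the marked length spectrum (here the marked Margulis-invariant spectrum together with entropy) determines the representation — likely drawing on \cite{ghosh2023margulis} or \cite{ghosh2018pressure}. A subtlety worth flagging is whether one needs $\sup_\gamma\frac{\alpha_\rho(\gamma)}{\alpha_\varrho(\gamma)}\cdot\sup_\gamma\frac{\alpha_\varrho(\gamma)}{\alpha_\rho(\gamma)}\geq 1$ always (true, by AM–GM type reasoning on any single $\gamma$), which gives $d_\Th(\rho,\varrho)+d_\Th(\varrho,\rho)\geq 0$ automatically but more importantly pins down when equality propagates; combined with analyticity of $\M(\Gamma,\sG)$ from \cite{ghosh2018pressure}, one should be able to conclude. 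I would also briefly check that the supremum is finite (so $d_\Th$ is genuinely $\bR$-valued off a degenerate locus) using boundedness of the ratio of Margulis invariants coming from the Anosov property of the linear parts and continuity in $\rho$.
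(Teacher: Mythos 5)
Your triangle inequality step is fine: telescoping the ratios and using $\sup_\gamma(a_\gamma b_\gamma)\le(\sup_\gamma a_\gamma)(\sup_\gamma b_\gamma)$ for positive terms, with positivity of $\alpha_\rho(\gamma)$ from the opposite sign lemma, is correct and more elementary than the paper's route (the paper pulls back the Carvajales--Dai--Pozzetti--Wienhard asymmetric distance on $\bP\HR(\phi)$ through the map $\tR(\rho)=[\phi^{h_\rho f_\rho}]$ built from the Labourie--Margulis invariant $f_\rho$, after checking that $\tR$ preserves the distance). The first genuine gap is that you never prove non-negativity of $d_\Th(\rho,\varrho)$ itself: your AM--GM remark only yields $d_\Th(\rho,\varrho)+d_\Th(\varrho,\rho)\ge 0$, which is strictly weaker. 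Non-negativity of each one-sided quantity is precisely where the entropy renormalization matters; one needs $\sup_{[\gamma]}\alpha_\rho(\gamma)/\alpha_\varrho(\gamma)\ \ge\ h_\varrho/h_\rho$, which follows from an orbit-counting argument (if $\alpha_\rho\le C\,\alpha_\varrho$ for all $\gamma$, then $R_{T/C}(\varrho)\subset R_T(\rho)$, so $h_\rho\ge h_\varrho/C$) or, as in the paper, from the thermodynamic theorem of CDPW on $\bP\HR(\phi)$. Without this, ``asymmetric metric'' is not established.

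The second gap concerns nondegeneracy, where your target statement is actually false on $\M(\Gamma,\sG)$: $d_\Th(\rho,\varrho)=0$ does not imply $\rho=\varrho$ in the moduli space. The paper's own example (replace $u_\rho$ by $cu_\rho$, so $\alpha\mapsto c\alpha$ and $h\mapsto h/c$) produces non-conjugate representations with $h_\rho\alpha_\rho\equiv h_\varrho\alpha_\varrho$ and hence $d_\Th=0$ in both directions. What the paper proves is the characterization $d_\Th(\rho,\varrho)=0\iff h_\rho\alpha_\rho= h_\varrho\alpha_\varrho$ (via the identification of the fibers of $\tR$ using Liv\v{s}ic's theorem), and only afterwards, by the separate rigidity theorem, that this forces $\varrho=\sigma\circ\rho$ for an automorphism $\sigma$ of $\sG$, with honest conjugacy only on the constant-entropy slices $\M_k(\Gamma,\sG)$. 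Moreover, even for this weaker conclusion your derivation is incomplete: $d_\Th(\rho,\varrho)=0$ together with $d_\Th(\varrho,\rho)\ge 0$ gives only the one-sided bound $h_\rho\alpha_\rho(\gamma)\le h_\varrho\alpha_\varrho(\gamma)$ for all $\gamma$; upgrading this to equality for every $\gamma$ requires the equality case of the renormalized intersection inequality (equilibrium states plus Liv\v{s}ic cohomology), which is exactly the thermodynamic input the paper imports from CDPW rather than something that follows from sup manipulations or from a generic appeal to spectral rigidity.
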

    Note that the renormalization by entropy in the definition of asymmetric metric is essential. For instance, let $\rho = (\tL_{\rho}, u_{\rho})$ and $\varrho = (\tL_{\varrho}, u_{\varrho})$ are two Margulis spacetimes with $\tL_{\rho} = \tL_{\varrho}$ and $u_{\varrho} = cu_{\rho}$ for some $c>1$. Then $\alpha_{\varrho}(\gamma) =c \alpha_{\rho}(\gamma)$ for all $\gamma \in \Gamma$. Hence  
    $$\log \left(\sup_{[\gamma] \in [\Gamma_\tH]} \frac{\alpha_{\rho}(\gamma)}{\alpha_{\varrho}(\gamma)}\right) = \log  \left(\frac{1}{c}\right) <0.$$

    On the other hand, for $\rho, \varrho \in \M(\Gamma, \sG)_k$ (See Section~3.2 of \cite{ghosh2023margulis} for Margulis  spacetimes with constant entropy), the entropy $h_{\rho}$ and $h_{\varrho}$ are fixed. In these situations, renormalizing by entropy is not needed, and the Thurston's asymmetric metric simplifies to 
    $$d_\Th(\rho, \varrho) = \log \left(\sup_{[\gamma] \in [\Gamma_\tH]} \frac{\alpha_{\rho}(\gamma)}{\alpha_{\varrho}(\gamma)}\right).$$

        This metric leads naturally to a renormalized marked-spectrum rigidity statement. The general rigidity result follow from a theorem by Kim \cite[Theorem 3]{kim2005affine}. We give the proof for completeness. However, we also outline an independent  proof fixing the linear part. This may be viewed as a renormalized analogue of the rigidity theorem of Kim~\cite{kim2005affine}. 
\begin{thm}\label{Th:Rigidity}
Let $\rho,\varrho:\Gamma\to\sG$ be Margulis spacetimes.  
If $h_\rho \alpha_\rho = h_{\varrho} \alpha_{\varrho}$, then there exists an isomorphism
$\sigma:\sG \to \sG$
such that $\sigma\circ\rho=\varrho$.  
In particular, if $\rho,\varrho \in \M_k(\Gamma, \sG)$, then $\rho$ and $\varrho$ are conjugate.
\end{thm}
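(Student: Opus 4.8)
The plan is to extract from the hypothesis $h_\rho \alpha_\rho = h_\varrho \alpha_\varrho$ an equality of Margulis invariants for a \emph{rescaled} representation, and then invoke Kim's rigidity theorem. First I would observe that the identity $h_\rho\,\alpha_\rho(\gamma) = h_\varrho\,\alpha_\varrho(\gamma)$ for every $\gamma\in\Gamma$ says that $\alpha_\varrho(\gamma) = (h_\rho/h_\varrho)\,\alpha_\rho(\gamma)$, i.e.\ the two Margulis invariant functions differ by the \emph{global} constant $c := h_\rho/h_\varrho > 0$. Since the Margulis invariant scales linearly in the translational part, $\alpha_{(\tL_\rho,\, c\,u_\rho)}(\gamma) = c\,\alpha_\rho(\gamma)$, the representation $\rho' := (\tL_\rho,\, c\,u_\rho)$ — which is again a Margulis spacetime, having the same Anosov, Zariski-dense linear part $\tL_\rho$ and a proper affine action (properness is preserved under scaling the cocycle, as $\alpha_{\rho'} = c\,\alpha_\rho$ keeps a constant sign on all of $\Gamma$) — satisfies $\alpha_{\rho'} = \alpha_\varrho$ on every conjugacy class.

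Next I would apply Kim's marked Margulis-spectrum rigidity \cite[Theorem 3]{kim2005affine}: if two Margulis spacetimes have equal Margulis invariant functions, then they are conjugate by an element of the isometry group of the ambient space, which here is an automorphism $\tau$ of $\sG = \SO^0(n+1,n)\ltimes\mathbb R^{2n+1}$, so $\tau\circ\rho' = \varrho$. It remains to package the passage $\rho\mapsto\rho'$ as an automorphism of $\sG$ as well: the map $\mu_c : (g,u)\mapsto (g,\,c\,u)$ is a Lie group automorphism of $\SO^0(n+1,n)\ltimes\mathbb R^{2n+1}$ for any $c>0$ (it is the identity on the linear factor and a linear isomorphism on the normal $\mathbb R^{2n+1}$ commuting with the $\SO^0(n+1,n)$-action), and by construction $\rho' = \mu_c\circ\rho$. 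Setting $\sigma := \tau\circ\mu_c$ gives an automorphism of $\sG$ with $\sigma\circ\rho = \varrho$, as required. Finally, if $\rho,\varrho\in\M_k(\Gamma,\sG)$ then $h_\rho = h_\varrho = k$, so $c = 1$, $\mu_c = \Id$, and $\sigma = \tau$ is an honest conjugation, giving the last sentence.

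For the promised independent argument \emph{fixing the linear part} (the case $\tL_\rho = \tL_\varrho =: \tL$), I would instead argue directly at the level of cocycles. Here $u_\rho, u_\varrho \in Z^1(\Gamma,\mathbb R^{2n+1}_{\tL})$ are translational cocycles twisted by $\tL$, and the equality $\alpha_{\rho'} = \alpha_\varrho$ with $\rho' = (\tL,\, c\,u_\rho)$ says that the cocycle $c\,u_\rho - u_\varrho$ has vanishing Margulis invariant on every conjugacy class. The key input is Ghosh's (or Goldman--Labourie--Margulis's) result that the Margulis invariant functional $u\mapsto\alpha_u$ is injective on $H^1(\Gamma,\mathbb R^{2n+1}_{\tL})$ when $\tL$ is Anosov and Zariski dense — equivalently, a twisted cocycle with all Margulis invariants zero is a coboundary. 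Hence $c\,u_\rho - u_\varrho = (\tL(\gamma)-\Id)w$ for a fixed $w\in\mathbb R^{2n+1}$, which means conjugation by the affine map $(\Id, w)$ followed by the rescaling $\mu_c$ carries $\rho$ to $\varrho$; combined with $\mu_c$ this again produces the desired $\sigma$.

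The main obstacle I anticipate is not the rescaling bookkeeping but verifying that $\rho' = (\tL_\rho, c\,u_\rho)$ genuinely remains a Margulis spacetime in the precise sense used in the paper — i.e.\ that properness of the affine action is stable under scaling the cocycle by a positive constant. This should follow from the Goldman--Labourie--Margulis properness criterion (properness $\iff$ the Margulis invariants $\alpha(\gamma)/\ell(\gamma)$ are bounded away from $0$ on $\Gamma$, which is scale-invariant up to the factor $c$), but I would need to state that criterion carefully since the paper's definition of $\Hom_\M(\Gamma,\sG)$ builds in properness. A secondary technical point in the ``fixing the linear part'' route is citing the correct injectivity statement for the Margulis invariant on twisted cohomology in the higher-dimensional $\SO^0(n+1,n)\ltimes\mathbb R^{2n+1}$ setting rather than only the classical $\SO(2,1)$ case.
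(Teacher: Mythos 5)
Your proposal is correct and follows essentially the same route as the paper: rescale the translational part by the entropy (the paper rescales both representations by $h_\rho$ and $h_\varrho$ to equalize the Margulis invariants, you rescale only $\rho$ by $c=h_\rho/h_\varrho$), invoke Kim's marked Margulis-invariant rigidity, and then absorb the rescaling into an automorphism $\phi_\lambda:(g,u)\mapsto(g,\lambda u)$ of $\sG$, exactly as in the paper's $\sigma=\phi_{h_\varrho^{-1}}\circ \Ad_g\circ\phi_{h_\rho}$; your fixed-linear-part cocycle argument likewise mirrors the paper's Theorem~\ref{Th: fixed}. The properness-under-scaling point you flag is handled in the paper's framework by Theorem~\ref{Opposite Sign Lemma}, since positivity (or negativity) of the Labourie--Margulis invariant is preserved under scaling by $c>0$.
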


 We then turn to the infinitesimal geometry of the moduli space.  
Differentiating the asymmetric metric yields a Finsler norm
$$
\|v\|_{\Th}
   = \sup_{[\gamma]\in[\Gamma_{\mathrm H}]}
     \frac{
       \dot{h}_{\rho}(v)\,\alpha_\rho(\gamma)
       + h_\rho\, \dot{\alpha}_{\rho}(\gamma))(v)
     }{h_\rho\,\alpha_\rho(\gamma)}
$$
defined on $T_\rho \M(\Gamma,\sG)$.  We refer to this norm as \emph{Thurston norm}.

\begin{thm}\label{Lemma:Tgnt Vec}
    Let $\{\rho_z\}_{z \in D} \subset \M(\Gamma, \sG)$ be a real analytic family of Margulis spacetimes parameterizing an open neighborhood of $\rho = \rho_0$. Fix an analytic  path $z: (-1,1) \rightarrow D$ with $z(0) = 0$, and let $\rho_s = \rho_{z(s)}$ and $v = \frac{d}{ds}\big|_{s=0} \rho_s \rho^{-1}$ be the tangent vector at $\rho$. Let also $h_s := h_{\rho_s}$, $\alpha_s := \alpha_{\rho_s}$ and $g_s:= h_s\widetilde{g}_{z(s)}$. Then 
    $$\lVert v\rVert_\Th = \lVert[\dot{g_0}]\rVert_\Th,$$
    where $\dot{g}_0 = \frac{d}{ds}\big|_{s=0}  g_s$ and the  Liv\v{s}ic cohomology class $\left[\dot{g_0}\right]$  belongs to the tangent space $T_{[\phi^{g_0}]} \bP\HR^\upsilon (\phi)$. 
\end{thm}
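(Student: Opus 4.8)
The plan is to realize the renormalized Margulis spectrum $\{h_\rho\,\alpha_\rho(\gamma)\}_{[\gamma]\in[\Gamma_\tH]}$ as the length spectrum of a H\"older reparametrization of the Gromov geodesic flow $\phi$ of $\Gamma$, to check that the path $s\mapsto\rho_s$ corresponds to an analytic path in $\bP\HR^\upsilon(\phi)$ whose velocity is exactly the Liv\v{s}ic class $[\dot g_0]$, and then to observe that the two Thurston norms in the statement are, by construction, one and the same supremum of ratios of periods.

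First I would recall the thermodynamic description of the invariant. For a Margulis spacetime with Zariski-dense Anosov linear part, the opposite-sign lemma of Goldman--Labourie--Margulis \cite{goldman2009proper} forces $\alpha_\rho$ to have a fixed sign on $[\Gamma_\tH]$; fixing the convention that this sign is positive, and since $h_\rho>0$ always, each $g_z=h_z\widetilde{g}_z$ is a positive H\"older function over $\phi$. By the neutral-section formula for the Margulis invariant \cite{ghosh2023affine}, $\widetilde{g}_z$ may be chosen so that its $\phi$-period over the periodic orbit $[\gamma]$ equals $\alpha_{\rho_z}(\gamma)$, whence
\[
\ell_{\phi^{g_z}}([\gamma])=\int_{[\gamma]}g_z=h_z\,\alpha_z(\gamma),\qquad [\gamma]\in[\Gamma_\tH].
\]
Thus $z\mapsto[\phi^{g_z}]$ is an analytic map of the given neighbourhood in $\M(\Gamma,\sG)$ into $\bP\HR^\upsilon(\phi)$, and $s\mapsto[\phi^{g_s}]$ is the image of $s\mapsto\rho_s$.

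Next I would differentiate. By the differentiability of $\rho\mapsto h_\rho$ and $\rho\mapsto\alpha_\rho(\gamma)$ \cite{ghosh2023margulis} together with the analyticity of the family, $s\mapsto g_s$ is a differentiable path in the space of H\"older functions, so $\dot g_0$ is H\"older and its Liv\v{s}ic class $[\dot g_0]$ is well defined; it is the velocity at $s=0$ of $s\mapsto[\phi^{g_s}]$, and since that path stays inside $\HR^\upsilon(\phi)$ --- positivity of $g_s$ being preserved because properness is an open condition on $\M(\Gamma,\sG)$ --- one gets $[\dot g_0]\in T_{[\phi^{g_0}]}\bP\HR^\upsilon(\phi)$, the entropy normalization $g_0=h_\rho\widetilde{g}_0$ pinning down the representative so the projectivization introduces no ambiguity. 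Because the flow $\phi$, and hence each periodic orbit $[\gamma]$, is fixed while only $g_s$ varies, one may differentiate under the period integral:
\[
\frac{d}{ds}\Big|_{s=0}\big(h_s\,\alpha_s(\gamma)\big)=\frac{d}{ds}\Big|_{s=0}\int_{[\gamma]}g_s=\int_{[\gamma]}\dot g_0,\qquad [\gamma]\in[\Gamma_\tH].
\]
Substituting $\dot h_\rho(v)=\frac{d}{ds}\big|_{0}h_s$ and $\dot\alpha_\rho(\gamma)(v)=\frac{d}{ds}\big|_{0}\alpha_s(\gamma)$ into the definition of the Thurston norm on $T_\rho\M(\Gamma,\sG)$ then yields
\[
\|v\|_\Th=\sup_{[\gamma]\in[\Gamma_\tH]}\frac{\dot h_\rho(v)\,\alpha_\rho(\gamma)+h_\rho\,\dot\alpha_\rho(\gamma)(v)}{h_\rho\,\alpha_\rho(\gamma)}=\sup_{[\gamma]\in[\Gamma_\tH]}\frac{\int_{[\gamma]}\dot g_0}{\int_{[\gamma]}g_0}=\big\|[\dot g_0]\big\|_\Th,
\]
the last equality being the definition of the Thurston norm at $[\phi^{g_0}]\in\bP\HR^\upsilon(\phi)$ (well posed because periods depend only on Liv\v{s}ic classes).

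The hard part is the middle paragraph: the assertion that the derivative of the renormalized period functions $s\mapsto h_s\,\alpha_s(\gamma)$ is represented, uniformly in $[\gamma]$, by a single H\"older cochain tangent to $\HR^\upsilon(\phi)$ rather than leaving its admissible positive cone. This needs the thermodynamic-formalism input of \cite{ghosh2023margulis, ghosh2018pressure} --- analyticity of the Liv\v{s}ic-cohomology bundle over $\M(\Gamma,\sG)$ and H\"older regularity of the reparametrization cocycle --- together with the persistence of sign-consistency of $\alpha_{\rho_s}$ along the path; granting these, the remaining steps are an unwinding of definitions in the spirit of \cite{carvajales2024thurston}.
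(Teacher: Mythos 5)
Your proposal follows essentially the same route as the paper: realize the renormalized Margulis spectrum as the periods of the analytic family $g_s=h_s\widetilde g_{z(s)}$ furnished by Proposition~\ref{Prop:Main Finsler}, differentiate under the period integral along the fixed flow, and identify the resulting supremum over conjugacy classes with the thermodynamic Finsler norm of $[\dot g_0]$. The one point to tighten is your final equality: $\lVert[\dot g_0]\rVert$ on $T_{[\phi^{g_0}]}\bP\HR^{\upsilon}(\phi)$ is defined in Theorem~\ref{Thm:Fins1} as a supremum over all $\phi$-invariant probability measures, not over periodic orbits, so it is not literally ``the definition''; you still need the bridging step that for the topologically transitive metric Anosov flow the periodic-orbit measures are dense in $\mathcal{P}(\phi)$, so the supremum of $\mu\mapsto\int \dot g_0\,d\mu\big/\!\int g_0\,d\mu$ over periodic orbits equals the supremum over $\mathcal{P}(\phi)$ --- exactly the one-line observation with which the paper closes its proof.
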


\begin{thm}\label{Theorem:Properties of Finsler norm}
 Let $\rho \in \M(\Gamma, \sG)$ be a Margulis spacetime admitting an analytic neighborhood in $\M(\Gamma, \sG)$. Then  $\lVert \cdot\rVert_\Th : \rT_{\rho}\M(\Gamma, \sG) \to \bR \cup \{\pm \infty\}$ is real valued and non-negative. Furthermore, it is $\bR_{>0}$-homogeneous, satisfies the triangle inequality and 
    \[
    \lVert v\rVert_\Th = 0 \Leftrightarrow \dot{\alpha}_{\rho} (\gamma))(v)= -\frac{\dot{h}_{\rho}(v)}{h_{\rho}}\alpha_{\rho}(\gamma)
    \]
    for all $\gamma \in \Gamma_\tH$. In particular, if the function $\rho \mapsto h_{\rho}$ is constant, that is, if $\rho\in \M_k(\Gamma, \sG)$, then 
    \[
    \lVert v\rVert_\Th = 0 \Leftrightarrow \dot{\alpha}_{\rho}(\gamma)(v) = 0
    \]
    for all $\gamma \in \Gamma_\tH$. In this case, the function $\lVert \cdot\rVert_\Th : \rT_{\rho}\M(\Gamma, \sG) \to \bR \cup \{+ \infty\}$
$$
\|v\|_{\Th}
 = \sup_{[\gamma]\in[\Gamma_{\mathrm H}]}
   \frac{d}{ds}\Big|_{s=0}\log\alpha_{\rho_s}(\gamma)
$$
defines a Finsler norm.
\end{thm}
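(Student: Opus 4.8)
The plan is to establish the five required properties of $\lVert\cdot\rVert_\Th$ — real-valuedness, non-negativity, $\bR_{>0}$-homogeneity, the triangle inequality, and the zero-characterization — and then observe that under the hypothesis $\rho\in\M_k(\Gamma,\sG)$ the first four upgrade the expression to a genuine Finsler norm. First I would fix an analytic path $\rho_s$ through $\rho$ with tangent $v$ and write, for each $\gamma\in\Gamma_\tH$,
$$
\frac{d}{ds}\Big|_{s=0}\log\bigl(h_{\rho_s}\alpha_{\rho_s}(\gamma)\bigr)
=\frac{\dot h_\rho(v)}{h_\rho}+\frac{\dot\alpha_\rho(\gamma)(v)}{\alpha_\rho(\gamma)},
$$
which is well-defined and finite because $\rho\mapsto h_\rho$ and $\rho\mapsto\alpha_\rho(\gamma)$ are differentiable with $h_\rho>0$ and $\alpha_\rho(\gamma)\neq 0$ (properness of the affine action forces the Margulis invariant to be nonzero, and in fact of constant sign along $\Gamma_\tH$). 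Thus $\lVert v\rVert_\Th$ is a supremum of finite real numbers; real-valuedness then comes from the uniform bound implicit in Theorem~\ref{Lemma:Tgnt Vec}, namely that $\lVert v\rVert_\Th=\lVert[\dot g_0]\rVert_\Th$ is the norm of a Liv\v{s}ic cohomology class in a finite-dimensional space, so in particular finite.

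Next I would handle the algebraic properties. $\bR_{>0}$-homogeneity is immediate: replacing $v$ by $\lambda v$ with $\lambda>0$ scales each term $\dot h_\rho(v)\alpha_\rho(\gamma)+h_\rho\dot\alpha_\rho(\gamma)(v)$ by $\lambda$ linearly, and since $\lambda>0$ it passes through the supremum. For the triangle inequality I would write each summand of $\lVert v+w\rVert_\Th$ as the sum of the corresponding summands for $v$ and $w$ (the numerator is linear in the tangent vector, the denominator $h_\rho\alpha_\rho(\gamma)$ does not depend on the tangent vector), and then use the elementary fact that $\sup_\gamma(a_\gamma+b_\gamma)\le\sup_\gamma a_\gamma+\sup_\gamma b_\gamma$. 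Non-negativity requires a small argument: taking $w=-v$ gives $0=\lVert v+(-v)\rVert_\Th\le\lVert v\rVert_\Th+\lVert -v\rVert_\Th$, but since the norm is only $\bR_{>0}$-homogeneous (asymmetric) I cannot conclude $\lVert -v\rVert_\Th=\lVert v\rVert_\Th$; instead I would argue directly that the supremum of $\frac{d}{ds}|_{s=0}\log(h_{\rho_s}\alpha_{\rho_s}(\gamma))$ is non-negative because $d_\Th(\rho,\rho_s)\ge 0$ for all $s$ (Theorem~1.1, positivity of the asymmetric metric), and differentiating $s\mapsto d_\Th(\rho,\rho_s)$ at $s=0$ from the right yields $\lVert v\rVert_\Th\ge 0$ — more carefully, $d_\Th(\rho,\rho_s)=\log\sup_\gamma\frac{h_{\rho_s}\alpha_{\rho_s}(\gamma)}{h_\rho\alpha_\rho(\gamma)}\ge 0$ and dividing by $s>0$ and letting $s\to 0^+$ gives the derivative bound after interchanging limit and supremum, which is where I expect to lean on the uniform convergence supplied by the analyticity of the family and Theorem~\ref{Lemma:Tgnt Vec}.

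For the zero-characterization, $\lVert v\rVert_\Th=0$ together with non-negativity of each term is \emph{not} what we have — individual terms can be negative — so I would instead argue that $\lVert v\rVert_\Th\le 0$ forces every term $\le 0$, while applying the same reasoning to $\lVert -v\rVert_\Th$ (which by the above is also $\ge 0$; if additionally it equals $0$) forces every term $\ge 0$; hence each term vanishes, i.e.
$$
\frac{\dot\alpha_\rho(\gamma)(v)}{\alpha_\rho(\gamma)}=-\frac{\dot h_\rho(v)}{h_\rho}
\qquad\text{for all }\gamma\in\Gamma_\tH.
$$
Here I must be slightly careful: $\lVert v\rVert_\Th=0$ alone gives that the supremum is $0$, hence all terms are $\le 0$; to get equality for all $\gamma$ I would invoke the rigidity input — the only way the sup over the (Zariski-dense, hence "rich") collection $\{\gamma^+,\gamma^-\}$ can be attained as a maximum equal to $0$ while staying $\le 0$ is handled by noting $[\dot g_0]$ is a cohomology class whose Thurston norm vanishes iff the class itself is suitably trivial, which by Theorem~\ref{Lemma:Tgnt Vec} transfers back to the stated pointwise identity. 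The specialization to $\M_k$ is then a one-line consequence: if $\rho\mapsto h_\rho$ is locally constant then $\dot h_\rho(v)=0$, so $\|v\|_\Th=\sup_\gamma\frac{d}{ds}|_{s=0}\log\alpha_{\rho_s}(\gamma)$, the zero-set condition becomes $\dot\alpha_\rho(\gamma)(v)=0$ for all $\gamma\in\Gamma_\tH$, and combined with Kim's marked-length rigidity (Theorem~\ref{Th:Rigidity}, infinitesimal version) this says $v=0$, giving positive-definiteness; together with homogeneity and the triangle inequality already shown, $\lVert\cdot\rVert_\Th$ is a Finsler norm. The main obstacle I anticipate is the interchange of $\frac{d}{ds}|_{s=0}$ with $\sup_{[\gamma]}$ needed both for real-valuedness and for the zero-characterization; I expect to resolve it exactly as in the proof of Theorem~\ref{Lemma:Tgnt Vec}, by passing to the finite-dimensional space of Liv\v{s}ic cohomology classes where the supremum becomes a continuous (indeed norm) function, rather than working with the raw $\gamma$-by-$\gamma$ expressions.
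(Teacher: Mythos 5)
Your overall route coincides with the paper's: identify $\lVert v\rVert_\Th$ with the thermodynamic norm $\lVert[\dot g_0]\rVert_{\sd}$ via Theorem~\ref{Lemma:Tgnt Vec}, import positivity and definiteness from Theorem~\ref{Thm:Fins1}, and translate the vanishing of the class $[\dot g_0]$ back into the pointwise identity on Margulis invariants via Liv\v{s}ic's Theorem~\ref{thm:Livsic} and Proposition~\ref{Prop:Main Finsler}. Your direct verification of $\bR_{>0}$-homogeneity and the triangle inequality from linearity of $v\mapsto \dot h_\rho(v)\alpha_\rho(\gamma)+h_\rho\,\dot\alpha_\rho(\gamma)(v)$ is a harmless (indeed more elementary) variant of the paper's appeal to Theorem~\ref{Thm:Fins1}, and your derivation of non-negativity by differentiating $s\mapsto d_\Th(\rho,\rho_s)\ge 0$ from the right is legitimate once Corollary~\ref{Coro:derivative of thurston metric} is in place. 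Your first attempt at the zero-characterization (using $\lVert -v\rVert_\Th$) indeed does not apply, and the fallback you describe --- definiteness of $\lVert\cdot\rVert_{\sd}$ on cohomology classes, then transfer back --- is exactly the paper's argument, though you leave it compressed.

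Two of your justifications need repair. First, the space of Liv\v{s}ic cohomology classes $\mathcal L^{\upsilon}(\U\Gamma)$ is an infinite-dimensional Banach space, not finite dimensional, so finiteness of $\lVert[\dot g_0]\rVert_{\sd}$ and the interchange of derivative and supremum cannot be deduced from ``finite-dimensionality''; the correct reason is the one in Theorem~\ref{Thm:Fins1} (the denominator $\int g_0\,d\mu$ is bounded below by $\min g_0>0$ on the compact space $\U\Gamma$) together with Theorem~\ref{Lemma:Tgnt Vec}, which you also cite, so this is a wrong reason for a true step rather than a fatal error. Second, and more substantively, in the $\M_k(\Gamma,\sG)$ case you obtain definiteness of the norm by invoking an ``infinitesimal version'' of Theorem~\ref{Th:Rigidity}. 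Global marked-spectrum rigidity does not formally imply infinitesimal rigidity (an injective map may have a degenerate differential), so as stated this step has a gap. What is actually required is the infinitesimal rigidity of the Margulis invariant: if $\dot\alpha_\rho(\gamma)(v)=0$ for all $\gamma\in\Gamma_\tH$, then the associated cocycle is a coboundary, i.e. $v=0$ in $\rT_\rho\M_k(\Gamma,\sG)$. The paper takes this from \cite[Theorem~2.1]{ghosh2023margulis}; with that citation substituted for your appeal to Theorem~\ref{Th:Rigidity}, your argument closes and matches the paper's proof.
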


Recall that the pressure metric introduced in \cite{bridgeman2015pressure}, \cite{bridgeman2018introduction} generalizes the Weil-Petersson metric from Teichmüller space to Anosov representations, providing a higher-rank analogue of the classical Teichmüller geometry. The pressure metric uses thermodynamical formalism to define an intersection function for projective Anosov representations. This framework establishes analyticity of both the intersection and entropy, and yields rigidity results for the intersection, offering a powerful tool for studying the geometry of higher-rank representation spaces. The pressure metric was extended to the setting of Margulis spacetimes in \cite{ghosh2023margulis}. 

We compare the Thurston norm with the pressure metric and establish convexity results for deformations with fixed Anosov linear part.  An additional analytic feature of our construction is that the Thurston Finsler norm is dominated by the pressure metric $\mathcal P$ (see Subsection~\ref{Pressure metric} for the definition).

\begin{prop}\label{domination}
For every Margulis spacetimes $\rho \in \M_k(\Gamma, \sG)$ and every 
$v\in T_\rho \M_k(\Gamma,\sG)$, the Finsler norm satisfies
$$
\|v\|_{\Th} \;\le\; C\,\sqrt{\mathcal{P}_\rho(v,v)},
$$
where $C>0$ is a constant depending only on the normalization of the Labourie-Margulis invariant $f_\rho$.  
\end{prop}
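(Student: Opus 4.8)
We sketch the argument; it is modelled on the strategy of \cite{carvajales2024thurston}, routed through the reduction of Theorem~\ref{Lemma:Tgnt Vec}. The plan is first to transport the statement to the thermodynamic side. By Theorem~\ref{Lemma:Tgnt Vec} we may replace $v$ by the Liv\v{s}ic cohomology class $[\dot{g}_0]\in T_{[\phi^{g_0}]}\bP\HR^\upsilon(\phi)$; writing $\ell_{g_0}(\gamma)$ for the $g_0$-period of the periodic orbit of $\gamma$ (so $\ell_{g_0}(\gamma)=\alpha_\rho(\gamma)$ up to the fixed entropy normalization) and $\ell_{\dot{g}_0}(\gamma)$ for the period of the variation, one has
$$
\lVert v\rVert_\Th=\lVert[\dot{g}_0]\rVert_\Th=\sup_{[\gamma]\in[\Gamma_\tH]}\frac{\ell_{\dot{g}_0}(\gamma)}{\ell_{g_0}(\gamma)}.
$$
By the Anosov closing lemma and the density of periodic-orbit empirical measures in the weak-$*$ compact set of $\phi^{g_0}$-invariant probability measures, this supremum equals $\sup_{\mu}\big(\int\dot{g}_0\,d\mu\big)\big/\big(\int g_0\,d\mu\big)$, and it is attained because $g_0$ is bounded below by a positive constant. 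On the same space, up to the positive normalization factor entering Subsection~\ref{Pressure metric}, $\mathcal P_\rho(v,v)$ is the variance $\mathrm{Var}_{\mu_0}(\dot{g}_0)$ of the same class (with $\mu_0$ the measure of maximal entropy of $\phi^{g_0}$).

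Next I would use the normalization of the Labourie--Margulis invariant: properness of the affine action makes $f_\rho$, and therefore $g_0$, bounded below by a positive constant $c_0$ on the compact flow space, so $\ell_{g_0}(\gamma)\ge c_0\,\ell_\phi(\gamma)$ for every $\gamma$. Since $\rho\in\M_k(\Gamma,\sG)$ the entropy is locally constant, and differentiating the identity $P(-h_\rho g_0)=0$ forces $\int\dot{g}_0\,d\mu_0=0$; in particular $\mu=\mu_0$ already realises the value $0$ in the supremum above, so the maximising measure has nonnegative numerator. Combining these two facts yields
$$
\lVert v\rVert_\Th\le\frac{1}{c_0}\sup_{\mu}\int\dot{g}_0\,d\mu .
$$
It therefore remains to bound $\sup_{\mu}\int\dot{g}_0\,d\mu$ by a multiple of $\sqrt{\mathrm{Var}_{\mu_0}(\dot{g}_0)}$, after which the previous two displays and the comparison $\mathrm{Var}_{\mu_0}(\dot{g}_0)\asymp\mathcal P_\rho(v,v)$ finish the proof with $C=C(c_0,\lVert g_0\rVert_\infty,h_\rho)$.

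For this last inequality I would exploit that, because of the reduction of Theorem~\ref{Lemma:Tgnt Vec}, the admissible classes $[\dot{g}_0]$ form the finite-dimensional image of $T_\rho\M_k(\Gamma,\sG)$ inside $T_{[\phi^{g_0}]}\bP\HR^\upsilon(\phi)$. On this finite-dimensional space both $[\dot{g}_0]\mapsto\sup_{\mu}\int\dot{g}_0\,d\mu$ and $[\dot{g}_0]\mapsto\sqrt{\mathrm{Var}_{\mu_0}(\dot{g}_0)}$ are continuous, $\bR_{>0}$-homogeneous of degree one, and — using $\int\dot{g}_0\,d\mu_0=0$ — vanish exactly on the coboundary locus; hence they are equivalent, and a compactness argument on the associated projective space gives the desired bound. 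Equivalently, and more quantitatively, one can use the pressure function $Q(t):=P(-h_\rho g_0+t\dot{g}_0)$: it is convex and analytic with $Q(0)=0$, $Q'(0)=\int\dot{g}_0\,d\mu_0=0$ and $Q''(0)=\mathrm{Var}_{\mu_0}(\dot{g}_0)$, the variational principle gives $Q(-u)\ge-h_\rho\lVert g_0\rVert_\infty+u\int\dot{g}_0\,d\mu$ for all $u>0$, and a uniform third-order control of $Q$ near $0$ (available because the relevant potentials $-h_\rho g_0+t\dot{g}_0$ range over a compact family of Hölder potentials, once more by finite-dimensionality) turns the comparison of these two bounds into $\sup_\mu\int\dot{g}_0\,d\mu\le C'\sqrt{\mathrm{Var}_{\mu_0}(\dot{g}_0)}$ with $C'$ built only from $\lVert g_0\rVert_\infty$ and $h_\rho$.

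The main obstacle is precisely this last step, and it is genuinely where finite-dimensionality is used: $\lVert v\rVert_\Th$ is a supremum over \emph{all} periodic orbits and so is governed by the extremal invariant measures, whereas $\mathcal P_\rho$ records only the variance at the measure of maximal entropy, i.e.\ the typical fluctuations — for an arbitrary Hölder function the former is not controlled by the latter, so one cannot avoid using that $\dot{g}_0$ comes from an actual (finite-dimensional) family of deformations. The secondary point, ensuring that the resulting constant depends only on the normalization data $c_0,\lVert g_0\rVert_\infty,h_\rho$ of $f_\rho$ rather than on $\rho$ itself, is then a matter of inspecting the explicit expression of the infinitesimal Labourie--Margulis invariant $\dot{g}_0$ in terms of the variation of the affine cocycle and making the above thermodynamic estimates quantitative.
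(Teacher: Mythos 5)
Your starting point is the same as the paper's: both arguments invoke Theorem~\ref{Lemma:Tgnt Vec} to write $\lVert v\rVert_\Th=\sup_{\mu\in\mathcal P(\phi)}\frac{\int g_v\,d\mu}{\int f_\rho\,d\mu}$ and interpret $\mathcal P_\rho(v,v)$ as the variance of $g_v$ with respect to the equilibrium state $\mu_{-h_\rho f_\rho}$. After that the routes diverge. The paper's proof is a one-line estimate: Cauchy--Schwarz at the single measure $\mu_{-h_\rho f_\rho}$ gives $\bigl|\int g_v\,d\mu_{-h_\rho f_\rho}\bigr|\le\sqrt{\mathcal P_\rho(v,v)}\,\sqrt{\int f_\rho\,d\mu_{-h_\rho f_\rho}}$, and the constant is read off from the normalization $\int f_\rho\,d\mu_{-h_\rho f_\rho}$; note that, as written, this controls only the term of the supremum corresponding to that one equilibrium measure, which is exactly the point you single out as ``the main obstacle.'' You instead close that gap by a genuine comparison of the two functionals on the finite-dimensional image of $\rT_\rho\M_k(\Gamma,\sG)$ in $T_{[\phi^{g_0}]}\bP\HR^{\upsilon}(\phi)$: positivity of the thermodynamic Finsler norm (Theorem~\ref{Thm:Fins1}), positive-definiteness of the variance on mean-zero classes that are not coboundaries, $\bR_{>0}$-homogeneity, and compactness of the unit sphere. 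That argument is sound and it honestly addresses the sup-over-all-measures issue; what it buys is, however, weaker than what you claim: the constant it produces depends on the embedded finite-dimensional subspace, hence on $\rho$ itself, not merely on $h_\rho$, $\lVert g_0\rVert_\infty$, or a normalization of $f_\rho$, so the dependence asserted in the statement is not actually established by your route (nor do you get the clean constant the paper extracts from $\int f_\rho\,d\mu_{-h_\rho f_\rho}$).

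One concrete defect: your ``more quantitative'' variant via $Q(t)=\mathbf P(\phi,-h_\rho g_0+t\dot g_0)$ does not deliver the bound. The variational principle gives, for any invariant $\mu$ and $u>0$, $u\int\dot g_0\,d\mu\le Q(u)+h_\rho\lVert g_0\rVert_\infty$, and the Taylor expansion $Q(u)\approx\tfrac{u^2}{2}\mathrm{Var}_{\mu_0}(\dot g_0)$ is valid only on a bounded $u$-interval, while for large $u$ the slope $Q(u)/u$ tends precisely to $\sup_\mu\int\dot g_0\,d\mu$, making the estimate circular there. Consequently the additive term $h_\rho\lVert g_0\rVert_\infty/u$ cannot be made small without leaving the range where the quadratic approximation holds, and no choice of $u$ yields $\sup_\mu\int\dot g_0\,d\mu\le C'\sqrt{\mathrm{Var}_{\mu_0}(\dot g_0)}$; indeed for a general H\"older class such an inequality is false, as you yourself observe. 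So you should discard that variant and rest the proof either on your compactness argument (accepting a $\rho$-dependent constant) or on the paper's direct Cauchy--Schwarz estimate at $\mu_{-h_\rho f_\rho}$, being explicit about how the supremum over all invariant measures is reduced to that single measure.
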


This inequality situates the asymmetric Thurston geometry of Margulis spacetimes within 
the analytic framework of the pressure metric, and shows that our Finsler norm is locally 
controlled by a smooth, symmetric metric.  Such domination phenomena mirror the 
relationship between Thurston’s metric and the Weil--Petersson metrics in 
classical Teichm\"{u}ller theory.

In addition to contructing the asymmetric metric and its infinitesimal Finsler structure, we also established several new convexity phenomena for Margulis spacetimes. 

\begin{thm}\label{Theorem:Convexity intro}
    Let $\Gamma$ be a word hyperbolic group and let $\rho_0, \rho_t \in \M_k(\Gamma, \sG)$ be two Margulis spacetimes with constant entropy $k$.  Let $\rho_t = (\tL, (1-t)u_0 + tu_1)$ for $t \in [0,1]$, where $\tL$ is a fixed Anosov linear part and $u_0,u_1$  are translational parts such that $\rho_t \in \M_k(\Gamma, \sG)$ for all $t$. Then

    $(1)$ The function $t \mapsto d_\Th(\rho, \rho_t)$ is convex on $[0,1]$.

    $(2)$ The function  
 $t \mapsto -\log \alpha_{\rho_t}$ is convex on $[0,1]$.

 $(3)$ The function $t \mapsto -\lVert v\rVert_\Th$ is convex on $[0,1]$.
\end{thm}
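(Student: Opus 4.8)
The plan is to reduce all three statements to the single analytic fact that for each fixed $\gamma\in\Gamma_\tH$ the Margulis invariant $\alpha_{\rho_t}(\gamma)$ is an \emph{affine} function of $t$, and then to use that a supremum of convex functions is convex. First I would observe that since the linear part $\tL$ is fixed along the path, the limit maps $\xi_{\tL}$, the neutral section $\nu_\rho$, and the repelling/attracting fixed points $\gamma^\pm$ are all independent of $t$; only the translational cocycle $u_{\rho_t}=(1-t)u_0+tu_1$ varies, and it varies affinely. By the definition $\alpha_{\rho_t}(\gamma)=\langle u_{\rho_t}(\gamma)\mid \nu_{\rho}(\gamma^+,\gamma^-)\rangle$ and bilinearity of $\langle\cdot\mid\cdot\rangle$, this gives
\[
\alpha_{\rho_t}(\gamma)=(1-t)\,\alpha_{\rho_0}(\gamma)+t\,\alpha_{\rho_1}(\gamma),
\]
so $t\mapsto\alpha_{\rho_t}(\gamma)$ is affine, hence both convex and concave. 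Since $\rho_t\in\M_k(\Gamma,\sG)$ for all $t$ (proper affine action), $\alpha_{\rho_t}(\gamma)>0$ for every $\gamma\in\Gamma_\tH$, so $\log\alpha_{\rho_t}(\gamma)$ is well-defined.

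For part $(2)$, I would use that $-\log$ is a decreasing convex function, and the composition of a convex decreasing function with an affine function is convex; thus $t\mapsto-\log\alpha_{\rho_t}(\gamma)$ is convex for each $\gamma$, and $t\mapsto-\log\alpha_{\rho_t}=\sup_{[\gamma]}\bigl(-\log\alpha_{\rho_t}(\gamma)\bigr)$ is a pointwise supremum of convex functions, hence convex (finiteness follows from Theorem~\ref{Theorem:Properties of Finsler norm} and the standing hypotheses). For part $(1)$, on $\M_k$ the entropy is the constant $k$, so $d_\Th(\rho,\rho_t)=\log\sup_{[\gamma]}\frac{h_\rho}{k}\,\frac{\alpha_\rho(\gamma)}{\alpha_{\rho_t}(\gamma)} = \log\frac{h_\rho}{k}+\sup_{[\gamma]}\bigl(\log\alpha_\rho(\gamma)-\log\alpha_{\rho_t}(\gamma)\bigr)$; each summand $\log\alpha_\rho(\gamma)$ is constant in $t$ and $-\log\alpha_{\rho_t}(\gamma)$ is convex by the argument just given, so the supremum is convex and adding the constant $\log(h_\rho/k)$ preserves convexity. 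Part $(3)$ follows by differentiating: on $\M_k$, Theorem~\ref{Theorem:Properties of Finsler norm} gives $\|v\|_\Th=\sup_{[\gamma]}\frac{d}{ds}\big|_{s=0}\log\alpha_{\rho_s}(\gamma)$, and since $\alpha_{\rho_t}(\gamma)$ is affine in $t$ the quantity $\frac{d}{ds}\big|_{s=0}\log\alpha_{\rho_{t+s}}(\gamma) = \frac{\alpha_{\rho_1}(\gamma)-\alpha_{\rho_0}(\gamma)}{\alpha_{\rho_t}(\gamma)}$ is the reciprocal of an affine positive function, hence convex in $t$; negating and taking the supremum over $[\gamma]$ again preserves convexity, giving convexity of $t\mapsto-\|v\|_\Th$.

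The main obstacle I anticipate is not the convexity bookkeeping but the two points of rigor that make the suprema behave well: first, verifying that $t\mapsto\alpha_{\rho_t}(\gamma)$ is genuinely affine requires confirming that the neutral map $\nu$ and the limit data depend only on $\tL_\rho$ and not on the translational part — this should follow from the construction of the neutral section in \cite{ghosh2023affine}, but it must be stated carefully. Second, one must check that each supremum over $[\Gamma_\tH]$ is finite (so the convex functions take values in $\bR$ rather than $+\infty$ on a proper subset), which is exactly the content ensured by the hypothesis $\rho_t\in\M_k$ together with the properness/compactness estimates underlying Theorem~\ref{Theorem:Properties of Finsler norm}; a supremum of convex functions is always convex as an $\bR\cup\{+\infty\}$-valued function, so convexity itself is unconditional, but for a clean statement one wants to record where finiteness comes from. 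A minor subtlety for part $(3)$ is the sign convention and the choice of the marked direction $v$; I would fix $v=\frac{d}{dt}\big|_{t=0}\rho_t\rho^{-1}$ corresponding to the displacement $u_1-u_0$ and note that the same computation applies at every basepoint $\rho_t$ along the segment, which is what lets us speak of $t\mapsto-\|v\|_\Th$ as a function on $[0,1]$.
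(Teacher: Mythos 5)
Your reduction of everything to the affineness of $t\mapsto\alpha_{\rho_t}(\gamma)$ (fixed linear part, hence fixed neutral section and fixed $\gamma^{\pm}$, plus bilinearity of $\langle\cdot\mid\cdot\rangle$) is exactly the paper's starting point, and your arguments for parts (1) and (2) are correct. For (2) the paper (Lemma~\ref{Lemma:log}) checks $\frac{d^2}{dt^2}\bigl(-\log\alpha_{\rho_t}(\gamma)\bigr)\ge 0$ directly, which is your ``$-\log$ of a positive affine function'' observation; for (1) the paper (Theorem~\ref{Thm:convexity1}) writes $d_\Th(\rho_0,\rho_t)=-\log\bigl(\inf_{[\gamma]}(1+t(\tfrac{\alpha_{\rho_1}(\gamma)}{\alpha_{\rho_0}(\gamma)}-1))\bigr)$ and uses that an infimum of affine functions is concave and that $-\log$ of a positive concave function is convex, whereas you exchange $\log$ and $\sup$ and invoke that a supremum of convex functions is convex. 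These are equivalent routes of the same elementary type; yours is, if anything, slightly cleaner since it gives convexity as an $\bR\cup\{+\infty\}$-valued function without any differentiability bookkeeping.

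Part (3) of your proposal, however, has a genuine gap. Write $c_\gamma:=\alpha_{\rho_1}(\gamma)-\alpha_{\rho_0}(\gamma)$ and $A_\gamma(t):=\alpha_{\rho_t}(\gamma)>0$, so that along the segment the norm at $\rho_t$ is $\lVert v\rVert_\Th(t)=\sup_{[\gamma]} c_\gamma/A_\gamma(t)$. First, your claim that $c_\gamma/A_\gamma(t)$ is convex in $t$ is only valid when $c_\gamma\ge 0$; for $c_\gamma<0$ it is concave, and nothing in the hypotheses excludes such $\gamma$ (indeed, since $h_{\rho_0}=h_{\rho_1}=k$ forces both $\sup_{[\gamma]}\alpha_{\rho_0}(\gamma)/\alpha_{\rho_1}(\gamma)\ge 1$ and $\sup_{[\gamma]}\alpha_{\rho_1}(\gamma)/\alpha_{\rho_0}(\gamma)\ge 1$, neither spectrum can uniformly dominate the other, and in general both signs of $c_\gamma$ occur). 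Second, and more seriously, the step ``negating and taking the supremum preserves convexity'' is not a valid operation: $-\lVert v\rVert_\Th(t)=-\sup_{[\gamma]}c_\gamma/A_\gamma(t)=\inf_{[\gamma]}\bigl(-c_\gamma/A_\gamma(t)\bigr)$ is an \emph{infimum}, negation reverses convexity, and an infimum of functions is not convex in general. In fact your own computation, applied to the classes with $c_\gamma>0$ which realize the supremum whenever $\lVert v\rVert_\Th(t)>0$, shows that $t\mapsto\lVert v\rVert_\Th(t)$ is a supremum of convex functions, hence convex --- which points toward \emph{concavity} of $-\lVert v\rVert_\Th$, the opposite of what is to be proved. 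So as written your derivation does not establish (3). For comparison, the paper itself gives no computation here: it asserts in one sentence that (3) is immediate from (2) and Theorem~\ref{Theorem:Properties of Finsler norm}, so there is no detailed argument to align with; but any correct treatment must pin down exactly which quantity (and at which basepoint the Finsler norm is evaluated) is claimed to be convex, and must handle the sign of $c_\gamma$ and the sup-versus-inf issue that your sketch elides.
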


\subsection*{Structure of the paper}

Section~\ref{Section 2} recalls the necessary background materials. In Section~\ref{Section 3}, we construct Thurston’s asymmetric metric on $\M(\Gamma, \sG)$ (resp. $\M_k(\Gamma, \sG)$), and prove the renormalized marked Margulis invariant rigidity theorem, i.e.  Theorem~\ref{Th:Rigidity}. In Section~\ref{Section 4}, we define the  Finsler norm on the tangent space of $\M(\Gamma, \sG)$ (resp. $\M_k(\Gamma, \sG)$), and compare it to the pressure metric. In Section~ \ref{Section:5}, we establish convexity results for deformations with fixed Anosov linear part. We prove  Theorem~\ref{Theorem:Convexity intro} in this section.

\section{Preliminaries} \label{Section 2}
        \subsection{Thermodynamical formalism}     In this subsection, we briefly recall fundamental concepts in the thermodynamic formalism dynamics of topological flows on compact metric spaces. It was developed by Bowen, Parry–Pollicott, Pollicott, Ruelle, for example, see  \cite{BowenRuelle}, \cite{bowen0}, \cite{pollicott1987symbolic}, \cite{parry1990zeta}, \cite{ruelle2004thermodynamic}. In higher Teichm\"{u}ller theory, these were introduced by Bridgeman-Canary-Labourie-Sambarino (See \cite{bridgeman2015pressure}).

       In the following, we briefly recall basic notions and results that are instrumental for our work, closely following the expositions of
  \cite[Section 3]{bridgeman2015pressure} and \cite[Section 2]{carvajales2024thurston}.

        \begin{dfn}
    Let $\phi$ be a H\"older continuous flow on a compact metric space $X$, and let $f: X \rightarrow \bR$ be a positive H\"older continuous function. Define $\alpha_f(x,t)$ for $(x,t) \in X \times \bR$ by 
    $$\int_{0}^{\alpha_f(x,t)} f(\phi_s(x))ds = t.$$
    Then the flow $\phi^f_t(x):= \phi_{\alpha_f(x,t)} (x)$ is called the \textit{H\"older reparametrization} of $\phi$ by $f$, and the  space of all such reparametrization of $\phi$ is denoted by $\HR(\phi)$.
\end{dfn}

\begin{dfn}\label{dfn: livsic}
Let $\phi$ be a H\"older continuous flow on a compact metric space $X$. Two H\"older continuous functions $f,g:X\to \bR$ are  \textit{Liv\v{s}ic cohomologous} (denoted by $f \sim_{\phi} g$), if  there exists  a H\"older continuous function $h:X\to\bR$, differentiable  along the flow direction,  such that   
$$f-g=\left.\frac{d}{d t}\right\vert_{t=0} h \circ\phi_t.$$
We write $[f]_{\phi}$ for the Liv\v{s}ic cohomology class of $f$.
\end{dfn}

\begin{dfn}\label{Def: Variation}
Let $\phi$ be a continuous flow on a compact metric space $X$, and let $\mathscr{P}({\phi})$ be the $\phi$-invariant probability measures on $X$. For a continuous function $f: X\to \bR$, the \textit{topological pressure} of $f$ with respect to $\phi$ is  
\begin{equation*}\label{eq: def pressure}
    \textbf{P}(\phi,f):=\sup\limits_{\mu\in\mathcal{P}({\phi})}\left(h(\phi,\mu)+ \int f  d\mu\right),
\end{equation*}
\noindent where $h(\phi,\mu)$ is the \textit{metric entropy} of $\mu$ with respect to the flow $\phi$. The pressure of the zero function is called  the \textit{topological entropy} of $\phi$, denoted by $h_{\phi}$. A  measure $\mu \in \mathscr{P}(\phi)$ realizing the supremum  is called an \textit{equilibrium state} of $f$.
\end{dfn}

\begin{prop}[\cite{BowenRuelle}]\label{prop: pressureZero}
    Let $\phi$ be a H\"older continuous flow on a compact metric space $X$ and $f$ is a positive  H{\"o}lder continuous function on $X$. Then 
    $$\textnormal{\textbf{P}}(\phi, -hf)=0 \Leftrightarrow h=h_{\phi^{f}}.$$ 
\end{prop}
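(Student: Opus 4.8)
The plan is to reduce the claim to Abramov's formula for the metric entropy under a time change, combined with the variational principle and a monotonicity argument for the pressure. First I would record two soft facts. Since $X$ is compact and $f$ is continuous and strictly positive, there are constants $0<f_{\min}\le f_{\max}<\infty$ with $f_{\min}\le f\le f_{\max}$, so $f_{\min}\le\int f\,d\mu\le f_{\max}$ for every $\mu\in\mathscr{P}(\phi)$. Next, the function $F(h):=\textbf{P}(\phi,-hf)$ is finite and strictly decreasing in $h$: if $h_1<h_2$ and $\mu$ is near-optimal for $F(h_2)$, then $h(\phi,\mu)-h_2\int f\,d\mu\le h(\phi,\mu)-h_1\int f\,d\mu-(h_2-h_1)f_{\min}\le F(h_1)-(h_2-h_1)f_{\min}$, giving $F(h_2)\le F(h_1)-(h_2-h_1)f_{\min}$. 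Moreover $F(h)\ge h_\phi+(-h)f_{\min}\to+\infty$ as $h\to-\infty$ and $F(h)\le h_\phi-h f_{\min}\to-\infty$ as $h\to+\infty$, so by continuity of $F$ there is a unique real $h_0$ with $F(h_0)=0$. The content of the proposition is the identification $h_0=h_{\phi^f}$.

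To compute $h_{\phi^f}$ I would invoke Abramov's theorem for the reparametrization $\phi^f$: the assignment $\mu\mapsto\bar\mu:=(\int f\,d\mu)^{-1}f\mu$ is a bijection of $\mathscr{P}(\phi)$ onto $\mathscr{P}(\phi^f)$ under which $h(\phi^f,\bar\mu)=h(\phi,\mu)/\int f\,d\mu$ (when $f\equiv c$ this correctly reduces to a slowdown of the flow by the factor $c$, dividing entropy by $c$). Applying the variational principle (Definition~\ref{Def: Variation}) to the Hölder flow $\phi^f$ on the compact space $X$ with zero potential then yields
\[
h_{\phi^f}\;=\;\sup_{\bar\mu\in\mathscr{P}(\phi^f)} h(\phi^f,\bar\mu)\;=\;\sup_{\mu\in\mathscr{P}(\phi)}\frac{h(\phi,\mu)}{\int f\,d\mu}.
\]

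Finally I would carry out the elementary $\sup$-manipulation. Write $a_\mu:=h(\phi,\mu)$ and $b_\mu:=\int f\,d\mu\in[f_{\min},f_{\max}]$, so $F(h)=\sup_\mu(a_\mu-hb_\mu)$ and $h_{\phi^f}=\sup_\mu a_\mu/b_\mu$. If $F(h)=0$, then $a_\mu-hb_\mu\le 0$, hence $a_\mu/b_\mu\le h$, for all $\mu$, so $h_{\phi^f}\le h$; picking $\mu_n$ with $a_{\mu_n}-hb_{\mu_n}\to0$ and using $b_{\mu_n}\le f_{\max}$ forces $a_{\mu_n}/b_{\mu_n}\to h$, so $h_{\phi^f}=h$. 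Conversely, if $h=h_{\phi^f}$, then $a_\mu\le hb_\mu$ for all $\mu$, so $F(h)\le0$; picking $\mu_n$ with $a_{\mu_n}/b_{\mu_n}\to h$ gives $a_{\mu_n}-hb_{\mu_n}=b_{\mu_n}(a_{\mu_n}/b_{\mu_n}-h)\to0$, so $F(h)\ge0$, whence $F(h)=0$. Uniqueness of the zero of $F$ then identifies $h_0=h_{\phi^f}$. I expect the only genuine obstacle to be the precise statement and invocation of Abramov's formula — in particular checking that the time change defined by $\int_0^{\alpha_f(x,t)}f(\phi_s x)\,ds=t$ is the one for which $f\mu$ (not $f^{-1}\mu$), suitably normalized, is $\phi^f$-invariant and the entropy ratio is $h(\phi,\mu)/\int f\,d\mu$; everything else is routine.
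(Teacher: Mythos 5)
Your argument is correct, and it is essentially the standard proof of this fact. The paper itself offers no proof: the proposition is imported verbatim from Bowen--Ruelle (and it is proved by exactly your route in the thermodynamical-formalism literature the paper leans on, e.g.\ the treatment in \cite{bridgeman2015pressure}), namely Abramov's formula for the time change $\phi^f$ — the bijection $\mu\mapsto\bar\mu=(\int f\,d\mu)^{-1}f\mu$ between $\mathscr{P}(\phi)$ and $\mathscr{P}(\phi^f)$ with $h(\phi^f,\bar\mu)=h(\phi,\mu)/\int f\,d\mu$ — followed by the variational principle and the elementary supremum manipulation using $0<f_{\min}\le f\le f_{\max}$. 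Two cosmetic remarks: in the direction $F(h)=0\Rightarrow h_{\phi^f}=h$ the bound you actually need when dividing $a_{\mu_n}-hb_{\mu_n}$ by $b_{\mu_n}$ is the lower bound $b_{\mu_n}\ge f_{\min}$ (the upper bound $f_{\max}$ is the one used in the converse direction), and your opening paragraph on strict monotonicity and the intermediate value theorem is not needed, since both implications are established directly; it only re-proves uniqueness of the zero of $h\mapsto\mathbf{P}(\phi,-hf)$, which is a pleasant by-product. As in the cited source, finiteness of $h_\phi$ (automatic in the metric Anosov setting where the proposition is applied) is implicitly used when you assert that $F$ is real-valued.
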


 Let $O$ be the set of periodic orbits of the flow $\phi$, and  $p_{\phi}(a)$ denotes the period of  $a \in O$.

\begin{prop}[\cite{bowen1972periodic}, \cite{pollicott1987symbolic}]
Let $\phi$ be a topologically transitive metric Anosov flow on a compact metric space $X$ and $f: X \to \bR$ is a positive H\"older continuous function. Then 
    \begin{equation*}\label{eq: entropy}
    h_{\phi^f} = \displaystyle\lim_{t\to\infty}\frac{1}{t}\log\#\{a\in O\mid\int_{0}^{p_{\phi}(a)} f(\phi_t(x)) dt\leq t\},
\end{equation*}
\noindent and this limit is positive and finite.
\end{prop}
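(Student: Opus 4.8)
The plan is to deduce the statement from two ingredients: the elementary bijection between periodic orbits of $\phi$ and of its reparametrization $\phi^f$, together with the classical identification of topological entropy with the exponential growth rate of periodic orbits for topologically transitive metric Anosov flows.

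First I would record the periodic-orbit correspondence. The flows $\phi$ and $\phi^f$ share the same orbits; only the time parametrization changes. If $a\in O$ is a periodic orbit of $\phi$ through a point $x$ with period $p_\phi(a)$, then by the defining relation $\int_0^{\alpha_f(x,t)} f(\phi_s(x))\,ds = t$ of the reparametrization, the $\phi^f$-period of $a$ is exactly $p_{\phi^f}(a) = \int_0^{p_\phi(a)} f(\phi_s(x))\,ds$, and this quantity is independent of the chosen $x\in a$ because $f>0$ is integrated over a full loop. Hence the set $\{a\in O\mid \int_0^{p_\phi(a)} f(\phi_s(x))\,ds \le t\}$ is precisely the set of periodic orbits of $\phi^f$ of period at most $t$, so the limit in the statement equals
$$
\lim_{t\to\infty}\frac1t\log\#\{a\in O\mid p_{\phi^f}(a)\le t\}.
$$

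Next I would invoke the fact that $\phi^f$ is again a topologically transitive metric Anosov flow. Transitivity is immediate since $\phi$ and $\phi^f$ have the same orbits, and the metric Anosov (Smale) structure is preserved under Hölder reparametrization: a positive Hölder roof change is a Hölder orbit equivalence carrying the local product structure, expansiveness, and a Markov coding over to $\phi^f$. Concretely, if $\phi$ is realized, up to the usual finite-to-one semiconjugacy, as a suspension of a subshift of finite type $\sigma$ with positive Hölder roof $r$, then $\phi^f$ is the suspension of the same $\sigma$ with roof $r_f(\cdot)=\int_0^{r(\cdot)} f(\phi_s(\cdot))\,ds$, again positive and Hölder. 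The theorem of Bowen~\cite{bowen1972periodic} and Pollicott~\cite{pollicott1987symbolic}, that for a topologically transitive metric Anosov flow $\psi$ one has $h_\psi=\lim_{t\to\infty}\frac1t\log\#\{a\mid p_\psi(a)\le t\}$, applied to $\psi=\phi^f$ then identifies $h_{\phi^f}$ with the value of the limit.

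Finally, for positivity and finiteness: since $X$ is compact and $f$ is Hölder and positive there are constants $0<m\le f\le M<\infty$. By Proposition~\ref{prop: pressureZero}, $h_{\phi^f}$ is the unique solution of $\textbf{P}(\phi,-hf)=0$; the map $h\mapsto\textbf{P}(\phi,-hf)$ is continuous and strictly decreasing, equals $\textbf{P}(\phi,0)=h_\phi>0$ at $h=0$ (a topologically transitive metric Anosov flow which is not a single closed orbit has positive topological entropy, and $h_\phi<\infty$ by compactness), and using $\textbf{P}(\phi,-c)=h_\phi-c$ for constants $c$ one gets $h_\phi/M\le h_{\phi^f}\le h_\phi/m$, so $h_{\phi^f}\in(0,\infty)$. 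I expect the only point that genuinely needs care is the claim that Hölder reparametrization preserves the metric Anosov property and the Markov coding; once that is in place, the statement reduces to the cited periodic-orbit counting theorem and elementary estimates using the bounds on $f$.
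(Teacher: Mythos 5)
Your proposal is correct, and it follows the same route the paper relies on: the paper states this proposition without proof, citing \cite{bowen1972periodic} and \cite{pollicott1987symbolic}, and your argument (periodic orbits of $\phi$ and $\phi^f$ coincide with $p_{\phi^f}(a)=\int_0^{p_\phi(a)}f(\phi_s(x))\,ds$, so the count is the orbit-counting function of $\phi^f$, which is again a topologically transitive metric Anosov flow, to which the Bowen--Pollicott growth-rate theorem applies) is precisely the standard argument behind those citations. Your positivity/finiteness bounds $h_\phi/M\le h_{\phi^f}\le h_\phi/m$ via the pressure equation of Proposition~\ref{prop: pressureZero} are also correct, with the understood caveat (implicit in the paper's setting of the Gromov flow of a non-elementary hyperbolic group) that $h_\phi>0$.
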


\begin{thm}[\cite{BowenRuelle},  \cite{pollicott1987symbolic},  \cite{parry1990zeta}]
    Let $\phi$ is a be a topologically transitive metric Anosov flow on a compact metric space $X$, and let  $f$ be a positive H\"older continuous on $X$. Then there exists a unique  equilibrium state $\mu_{f}$ for $f$ with respect to $\phi$.
\end{thm}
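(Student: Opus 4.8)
The plan is to reduce the statement to symbolic dynamics and then invoke the Ruelle transfer operator. A topologically transitive metric Anosov flow $\phi$ on a compact metric space $X$ admits, by the work of Bowen and Pollicott (see \cite{pollicott1987symbolic, parry1990zeta}), a Markov coding: there is a topologically transitive two-sided subshift of finite type $(\Sigma,\sigma)$, a positive H\"older roof function $\tau:\Sigma\to\bR_{>0}$, and a H\"older surjection $\pi:\Sigma^\tau\to X$ from the suspension $\Sigma^\tau=(\Sigma\times\bR)/\!\!\sim$ intertwining the suspension flow $\phi^\tau$ with $\phi$, bounded-to-one and injective off the $\pi$-image of the boundary of the Markov partition. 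Pulling $f$ back yields a positive H\"older function $\tilde f=f\circ\pi$ on $\Sigma^\tau$, with $\mathbf{P}(\phi^\tau,\tilde f)=\mathbf{P}(\phi,f)=:P$.

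\textbf{Step 2 (Abramov correspondence).} Invariant probabilities of the suspension flow correspond bijectively to invariant probabilities of the base via $m\leftrightarrow\mu$, with $h(\phi^\tau,m)=h(\sigma,\mu)/\int\tau\,d\mu$ and $\int\tilde f\,dm=\int\Delta\tilde f\,d\mu/\int\tau\,d\mu$, where $\Delta\tilde f(x):=\int_0^{\tau(x)}\tilde f(x,s)\,ds$. Hence $m$ is an equilibrium state for $\tilde f$ at pressure $P$ if and only if $\mu$ is an equilibrium state for the H\"older potential $\psi:=\Delta\tilde f-P\tau$ on $\Sigma$, and $P$ is precisely the value forcing $\mathbf{P}(\sigma,\psi)=0$, since $s\mapsto\mathbf{P}(\sigma,\Delta\tilde f-s\tau)$ is strictly decreasing (as $\tau>0$); this is the shift-level avatar of Proposition~\ref{prop: pressureZero}. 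So it suffices to produce and characterize the unique equilibrium state of $\psi$ on the subshift of finite type and transport it through $\pi$.

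\textbf{Step 3 (transfer operator).} Replacing $\psi$ by a Liv\v{s}ic-cohomologous potential depending only on forward coordinates, we pass to the one-sided shift and consider the Ruelle operator $\mathcal{L}_\psi$ on H\"older functions. Topological transitivity makes $\mathcal{L}_\psi$ irreducible, so the Ruelle--Perron--Frobenius theorem yields a simple leading eigenvalue $\lambda=e^{\mathbf{P}(\sigma,\psi)}=1$, a strictly positive H\"older eigenfunction $h_\psi$, and a unique eigenprobability $\nu_\psi$ for $\mathcal{L}_\psi^{*}$; the measure $\mu_\psi:=h_\psi\,\nu_\psi$ is $\sigma$-invariant, is the Gibbs state of $\psi$, and by the variational principle (Definition~\ref{Def: Variation}) is the \emph{unique} equilibrium state of $\psi$. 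Lifting $\mu_\psi$ to $\Sigma^\tau$ by Abramov and pushing forward under $\pi$ produces the claimed $\mu_f$. Conversely, any equilibrium state for $f$ on $X$ lifts (Step 4) to an equilibrium state for $\tilde f$, hence corresponds to an equilibrium state for $\psi$, which must be $\mu_\psi$; uniqueness of $\mu_f$ follows.

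\textbf{Step 4 / main obstacle.} The genuinely nontrivial ingredients — and the reason this is a theorem rather than a formality — are (i) the existence of the Markov coding for \emph{metric} Anosov flows (as opposed to smooth Axiom A flows), due to Pollicott building on Bowen and Ratner, and (ii) the fact that the non-injectivity locus $B$ of $\pi$, namely the $\pi$-image of the partition boundary, is null for every equilibrium state, so that the bijection of Step 2 is valid within the class of equilibrium measures. Point (ii) holds because $B$ is closed, nowhere dense and its $\phi$-orbit closure is contained in a proper subsystem of strictly smaller topological pressure, hence carries no equilibrium mass; this is precisely the estimate carried out in \cite{BowenRuelle, pollicott1987symbolic}, which we cite rather than reprove.
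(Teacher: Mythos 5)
The paper offers no proof of this statement: it is quoted as a classical background theorem with attributions to Bowen--Ruelle, Pollicott, and Parry--Pollicott, so there is nothing internal to compare against. Your sketch is exactly the standard argument those references give --- Markov coding of the metric Anosov flow, Abramov reduction of the equilibrium problem to the potential $\Delta\tilde f - P\tau$ on the base subshift, Ruelle--Perron--Frobenius for the transfer operator, and the boundary-nullity argument to transport uniqueness back through the semiconjugacy --- and it is essentially correct. The only point worth flagging is in Step 3: transitivity of the subshift gives irreducibility but not aperiodicity, so the clean form of Ruelle--Perron--Frobenius applies only after passing to the spectral (cyclic) decomposition and working with an appropriate power of the shift; this is routine and does not affect uniqueness of the equilibrium state, but a careful write-up should mention it.
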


The equilibrium state for zero function is the \textit{Bowen-Margulis measure} of $\phi$, denoted by $\mu^\BM:=\mu_{0}.$

\begin{thm}[Liv\v{s}ic {\cite{Livsic}}] \label{thm:Livsic} Let $\phi$ be a topologically transitive metric Anosov flow on a compact metric space $X$. Then two H\'older continuous functions  $f,g :X \to \bR$ are Liv\v{s}ic cohomologous if and only if 
$$\int_0^{p_{\phi}(a)} f(\phi_t(x))dt = \int_0^{p_{\phi}(a)} g(\phi_t(x))dt,$$
for all $a \in O$ and  $x \in a$. 
\end{thm}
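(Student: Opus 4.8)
The statement to prove is Livšic's theorem: for a topologically transitive metric Anosov flow $\phi$ on a compact metric space $X$, two Hölder continuous functions $f,g:X\to\bR$ are Livšic cohomologous if and only if their period integrals agree over every periodic orbit. The "only if" direction is immediate from Definition \ref{dfn: livsic}: if $f-g = \frac{d}{dt}\big|_{t=0} h\circ\phi_t$ for a Hölder $h$ differentiable along the flow, then for any periodic orbit $a\in O$ of period $p_\phi(a)$ and any $x\in a$,
$$\int_0^{p_\phi(a)} (f-g)(\phi_t(x))\,dt = \int_0^{p_\phi(a)} \frac{d}{ds}\Big|_{s=0} h(\phi_{t+s}(x))\,dt = h(\phi_{p_\phi(a)}(x)) - h(x) = 0,$$
since $\phi_{p_\phi(a)}(x)=x$. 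So the plan is to concentrate on the converse, which is the substantive content.

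For the "if" direction, I would set $F := f-g$, a Hölder function whose integral vanishes over every periodic orbit, and construct a Hölder $h$, differentiable along flow lines, with $\frac{d}{dt}\big|_{t=0} h\circ\phi_t = F$. The natural candidate is obtained by integrating $F$ along a forward orbit from a fixed basepoint: fix a point $x_0$ in a dense orbit (transitivity), define $h$ on that orbit by $h(\phi_t(x_0)) := \int_0^t F(\phi_s(x_0))\,ds$, and show this is uniformly continuous so it extends to all of $X$. The key steps in order: (1) use the metric Anosov (hyperbolic) structure — the local product structure, shadowing/closing lemma, and exponential contraction on stable/unstable sets — to show that if two points $y,z$ lie close together with $z$ on the weak-stable set of $y$, then $\big|\int_0^t F(\phi_s(y))\,ds - \int_0^t F(\phi_s(z))\,ds\big|$ stays bounded and in fact the difference converges as $t\to\infty$, with Hölder control in $d(y,z)$; (2) use the closing lemma to replace a long orbit segment that nearly returns to itself by a genuine periodic orbit, and invoke the hypothesis (vanishing period integrals) to control the "holonomy" of the integral around such near-loops; (3) combine (1) and (2) to prove that the partially-defined $h$ is Hölder-uniformly-continuous on the dense orbit, hence extends to a Hölder function on $X$; (4) verify that the extension is differentiable along the flow with derivative $F$, which is essentially built into the construction since along the basepoint orbit $h(\phi_{t+\tau}(x_0)) - h(\phi_t(x_0)) = \int_t^{t+\tau} F(\phi_s(x_0))\,ds$, and pass this relation to the closure by continuity.

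The main obstacle is step (2)–(3): showing the integral $\int_0^t F(\phi_s(x_0))\,ds$ does not drift as $t\to\infty$ and that nearby orbit segments give nearly equal integrals, uniformly. This is exactly where the hyperbolicity and the periodic-orbit hypothesis must be married via the Anosov closing lemma: a recurrent segment of the dense orbit is exponentially shadowed by a periodic orbit on which the integral vanishes, and the shadowing plus Hölder continuity of $F$ plus exponential contraction lets one estimate the defect by a convergent geometric series. Since the paper works in the "metric Anosov" (Smale flow) setting rather than assuming a smooth Anosov flow, I would either cite the closing/shadowing lemma for metric Anosov flows in this generality or note that this is classical; the rest of the argument — the telescoping estimate producing a Hölder modulus of continuity for $h$ and the verification of flow-differentiability — is then routine given those inputs, so I would not grind through the constants.
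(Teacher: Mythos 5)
The paper does not prove this statement at all: it is quoted as a classical background result and attributed directly to Liv\v{s}ic's original work, so there is no in-paper argument to compare against. Your outline is the standard proof of Liv\v{s}ic's theorem --- the trivial ``only if'' direction via the fundamental theorem of calculus along a closed orbit, and the converse by integrating $F=f-g$ along a dense orbit (furnished by topological transitivity), using the Anosov closing lemma together with exponential shadowing and the H\"older continuity of $F$ to show the resulting function has a H\"older modulus of continuity, extending to $X$, and recovering differentiability along the flow from the cocycle identity $h(\phi_\tau y)-h(y)=\int_0^\tau F(\phi_s y)\,ds$. This is correct in structure; the only caveats are the ones you already flag, namely that the shadowing/closing machinery must be invoked in the metric Anosov (Smale flow) setting rather than the smooth one, and that the telescoping estimate should also absorb the small discrepancy between the length of the nearly closed segment and the period of the shadowing closed orbit --- both are standard and available in the literature you would cite.
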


\begin{thm}[{\cite{parry1990zeta}}]\label{Analyticity of pressure}
    Let $\phi$ be a topologically transitive metric Anosov flow on a compact metric space $X$, and let $f,g:X\to\bR$ be H\"older continuous. Then the pressure map  $$s\mapsto\mathbf{P}(\phi,f+sg)$$ \noindent is analytic. Moreover, the derivative at $s=0$ is given by    
        $$\frac{d}{ds}\bigg\vert_{s=0} \textnormal{\bf P}(\phi, f+sg)= \int g\ d \mu_{f},$$ where $\mu_f $ is the equilibrium state of $f$ with respect to the flow $\phi$.
\end{thm}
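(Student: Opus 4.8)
The plan is to reduce the statement to the spectral theory of Ruelle transfer operators and then apply analytic perturbation theory. First I would pass to a symbolic model: a topologically transitive metric Anosov flow admits a Markov coding (Bowen and Ratner for Anosov flows, extended to the metric Anosov setting by Pollicott, \cite{pollicott1987symbolic}), i.e.\ a suspension flow $\sigma^r$ over a topologically transitive subshift of finite type $(\Sigma,\sigma)$ with strictly positive H\"older roof function $r$, together with a bounded-to-one H\"older map $\pi$ onto $(X,\phi)$ that preserves pressure and carries $\sigma^r$-equilibrium states bijectively onto $\phi$-equilibrium states. Setting $F:=f\circ\pi$ and $G:=g\circ\pi$, it suffices to prove both assertions for $\sigma^r$.

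Next I would collapse the suspension onto its base. Put $\Delta_F(x):=\int_0^{r(x)}F(x,u)\,du$ and $\Delta_G(x):=\int_0^{r(x)}G(x,u)\,du$; these are H\"older on $\Sigma$. By the Abramov/Bowen--Ruelle identity, $\mathbf{P}(\sigma^r,F+sG)=p(s)$, where $p(s)$ is the unique real number with
$$\mathbf{P}_\sigma\!\big(\Delta_F+s\,\Delta_G-p(s)\,r\big)=0,$$
and $\mathbf{P}_\sigma$ denotes the pressure for the base map $\sigma$. Here $\mathbf{P}_\sigma(\psi)=\log\lambda(\psi)$, where $\lambda(\psi)$ is the leading eigenvalue of the Ruelle operator $\mathcal{L}_\psi$ acting on the Banach space of H\"older functions on $\Sigma$; by the Ruelle--Perron--Frobenius theorem this eigenvalue is simple and isolated in the spectrum (this is where topological transitivity enters), so it depends real-analytically on $\psi$ by Kato's analytic perturbation theory. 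Hence $(s,p)\mapsto\mathbf{P}_\sigma(\Delta_F+s\Delta_G-p\,r)$ is real-analytic, with partial derivative in $p$ equal to $-\int r\,d\nu_p$, which is strictly negative because $r>0$; here $\nu_p$ is the equilibrium state of $\Delta_F-p\,r$. The analytic implicit function theorem now yields that $s\mapsto p(s)$ is real-analytic, and therefore so is $s\mapsto\mathbf{P}(\phi,f+sg)$.

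For the derivative formula I would differentiate the implicit relation at $s=0$. Let $\nu$ be the equilibrium state of $\Delta_F-p(0)\,r$ on $\Sigma$. The derivative of the base-map pressure along a family of H\"older potentials is integration against the equilibrium state (this comes from differentiating $\log\lambda$ and pairing with the normalized left and right eigenfunctions, or equivalently from the variational principle), so differentiating gives $0=\int\Delta_G\,d\nu-p'(0)\int r\,d\nu$, i.e.\ $p'(0)=\big(\int\Delta_G\,d\nu\big)\big/\big(\int r\,d\nu\big)$. Finally, Abramov's correspondence identifies the equilibrium state $\mu_f$ (pulled back to the suspension) with the normalized product $\big(\int r\,d\nu\big)^{-1}(\nu\times\mathrm{Leb})$, so $\int g\,d\mu_f=\big(\int r\,d\nu\big)^{-1}\int_\Sigma\Delta_G\,d\nu=p'(0)$, which is the asserted formula.

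The hard part is the symbolic-dynamics and transfer-operator input: building the Markov coding for a metric Anosov flow with a merely H\"older roof function, and establishing the spectral gap (Ruelle--Perron--Frobenius) for $\mathcal{L}_\psi$ on H\"older functions so that Kato's perturbation theory applies — this is precisely where topological transitivity is used, via reduction to the irreducible symbolic picture. Once these tools are in place the remaining steps — the analytic implicit function theorem and the bookkeeping through Abramov's formula — are routine, and since all of this is classical we will simply invoke \cite{parry1990zeta}, \cite{BowenRuelle}, \cite{pollicott1987symbolic}.
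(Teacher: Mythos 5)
Your outline is correct and follows the standard transfer-operator route (Markov coding, Ruelle--Perron--Frobenius spectral gap, Kato perturbation, implicit function theorem, Abramov's formula), which is precisely the argument in the references the paper cites; the paper itself states this theorem as known background from \cite{parry1990zeta} and gives no proof. So your proposal matches the intended source argument and there is nothing to compare beyond that.
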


We now assume that  $\psi := \phi^f$ and $\widehat{\psi} := \phi^g$ are two H\"older reparametrization of $\phi$, where   where $f,g: X \to \bR$ are two positive H\"older continuous functions, i.e., $\psi, \widehat{\psi} \in \HR(\phi)$.

\begin{thm}[{\cite{carvajales2024thurston}}]
    Let $\phi$ be a topologically transitive metric Anosov flow on a compact metric space $X$, and let $\mu \in \mathcal{P}(\phi)$. The quantity 
    $$\textnormal{\textbf{I}}_{\mu}(\psi, \widehat{\psi}):= \int\frac{g}{f} d\mu,$$ 
    is called the $\mu$-\textit{intersection number} between $\psi$ and $\widehat{\psi}$ and satisfies 
    $$\sup_{a \in O} \frac{p_{\widehat{\psi}}(a)}{p_{\psi}(a)}= \sup_{\mu \in \mathcal{P}(\phi)} \textnormal{\textbf{I}}_{\mu}(\psi, \widehat{\psi}),$$ where $O$ denotes the set of periodic orbits of $\phi$. 
\end{thm}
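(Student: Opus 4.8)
The plan is to prove the variational identity
\[
\sup_{a \in O} \frac{p_{\widehat{\psi}}(a)}{p_{\psi}(a)} = \sup_{\mu \in \mathcal{P}(\phi)} \textbf{I}_{\mu}(\psi, \widehat{\psi})
\]
by first establishing that the period of a periodic orbit $a$ under a reparametrized flow $\phi^f$ is exactly the $f$-integral around that orbit. Concretely, if $a$ is a periodic orbit of $\phi$ with period $p_\phi(a)$ and $x \in a$, then from the definition of the H\"older reparametrization, $\alpha_f(x, t) = p_\phi(a)$ when $\int_0^{p_\phi(a)} f(\phi_s(x))\,ds = t$; hence the $\phi^f$-period of $a$ is $p_{\psi}(a) = \int_0^{p_\phi(a)} f(\phi_s(x))\,ds$, and similarly $p_{\widehat{\psi}}(a) = \int_0^{p_\phi(a)} g(\phi_s(x))\,ds$. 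Therefore the ratio of periods is a ratio of $f$- and $g$-integrals over the same orbit with respect to the original flow $\phi$.

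Next I would pass from orbits to invariant measures in both directions. For the inequality $\sup_{a} \frac{p_{\widehat{\psi}}(a)}{p_{\psi}(a)} \le \sup_{\mu} \textbf{I}_\mu(\psi,\widehat{\psi})$: to any periodic orbit $a$ associate the normalized Lebesgue (Dirac-type) invariant probability measure $\mu_a$ supported on $a$, for which $\int g \, d\mu_a = \frac{1}{p_\phi(a)} \int_0^{p_\phi(a)} g(\phi_s(x))\,ds$ and likewise for $f$; then
\[
\frac{p_{\widehat{\psi}}(a)}{p_{\psi}(a)} = \frac{\int_0^{p_\phi(a)} g(\phi_s(x))\,ds}{\int_0^{p_\phi(a)} f(\phi_s(x))\,ds} = \frac{\int g\, d\mu_a}{\int f\, d\mu_a}.
\]
This last expression is not literally $\textbf{I}_{\mu_a}(\psi,\widehat{\psi}) = \int \frac{g}{f}\,d\mu_a$, so the argument needs the observation that one may instead renormalize the measure by $f$: setting $d\tilde\mu_a = \frac{f\, d\mu_a}{\int f\, d\mu_a}$, which is again a $\phi$-invariant probability measure (invariance of $f\,d\mu_a$ up to the flow speed change is the standard correspondence between invariant measures of $\phi$ and of $\phi^f$), one gets $\int \frac{g}{f}\,d\tilde\mu_a = \frac{\int g\, d\mu_a}{\int f\, d\mu_a}$. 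For the reverse inequality $\sup_{\mu} \textbf{I}_\mu \le \sup_a \frac{p_{\widehat\psi}(a)}{p_\psi(a)}$: invoke the fact (Bowen–Ruelle, or the closing/Anosov specification machinery already cited) that periodic orbit measures are weak-$*$ dense in $\mathcal{P}(\phi)$, so any $\mu$ can be approximated by convex combinations of $\mu_a$'s, and the functional $\mu \mapsto \frac{\int g\, d\mu}{\int f\, d\mu}$ (equivalently $\int \frac{g}{f}\,d\mu$ after the $f$-renormalization) is continuous in the weak-$*$ topology because $f, g$ are H\"older hence continuous and $f$ is bounded away from $0$; a convex combination of ratios is bounded by the maximum of the ratios, and density finishes the estimate.

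The main obstacle is handling the bookkeeping between the two natural normalizations of invariant measures — the correspondence $\mu \leftrightarrow \tilde\mu$ with $d\tilde\mu = \frac{f\,d\mu}{\int f\,d\mu}$ that intertwines $\phi$-invariance with $\phi^f$-invariance — and making sure the supremum over $\mathcal{P}(\phi)$ in the statement of $\textbf{I}_\mu$ matches the supremum that arises from period ratios. Once one is careful that $\int \frac{g}{f}\,d\mu$ and $\frac{\int g\,d\mu'}{\int f\,d\mu'}$ range over the same set of values as $\mu, \mu'$ range over $\mathcal{P}(\phi)$, both inequalities follow from: (i) the period-as-integral formula above, (ii) weak-$*$ density of periodic orbit measures for transitive metric Anosov flows, and (iii) the elementary fact that a weighted average of real numbers lies between their infimum and supremum. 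I do not expect to need the full strength of Liv\v{s}ic's theorem or the analyticity of pressure here, though the density of periodic measures is the one external input that must be cited carefully.
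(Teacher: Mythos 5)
The paper itself gives no argument for this statement (it is quoted from Carvajales--Dai--Pozzetti--Wienhard), so your proposal has to stand on its own, and its overall shape is indeed the standard one: period of $\phi^f$ over a closed orbit equals $\int_0^{p_\phi(a)}f(\phi_s(x))\,ds$, pass to orbital measures, then use density of periodic orbit measures together with the mediant inequality. The one genuine error is the claim that $d\tilde\mu_a=\frac{f\,d\mu_a}{\int f\,d\mu_a}$ ``is again a $\phi$-invariant probability measure.'' It is not: the Abramov-type correspondence you invoke sends $\mathcal{P}(\phi)$ to $\mathcal{P}(\phi^f)$, so $\tilde\mu_a$ is the $\psi$-orbital measure, invariant under the reparametrized flow $\psi=\phi^f$, and it is $\phi$-invariant only when $f$ is constant along the orbit (the unique $\phi$-invariant probability measure on a closed orbit is $\mu_a$ itself). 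This matters because the statement as transcribed takes $\sup_{\mu\in\mathcal{P}(\phi)}\int\frac{g}{f}\,d\mu$, and with that literal reading your chain breaks: on orbital measures $\int\frac{g}{f}\,d\mu_a$ is the $\phi$-time average of $g/f$, not the period ratio $\frac{\oint_a g}{\oint_a f}$ (already for $g\equiv 1$, Cauchy--Schwarz gives $\frac{1}{p_\phi(a)}\oint_a\frac{1}{f}\ \geq\ \frac{p_\phi(a)}{\oint_a f}$ with equality only for $f$ constant along $a$), and more generally the two sides become suprema of the same functional over the two different compact convex sets $\mathcal{P}(\phi)$ and $\mathcal{P}(\psi)$, which do not coincide unless $f$ is essentially constant; one can then choose $g$ so that the literal identity fails. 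So you cannot both keep the integrand $g/f$ and the measure class $\mathcal{P}(\phi)$.

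The repair is exactly the bookkeeping you flagged as the ``main obstacle,'' carried out consistently: either define $\mathbf{I}_\mu$ for $\mu\in\mathcal{P}(\psi)$ with integrand $g/f$ (then $\mathbf{I}_{\nu_a}(\psi,\widehat\psi)=\frac{p_{\widehat\psi}(a)}{p_\psi(a)}$ on the nose, with $\nu_a=\tilde\mu_a$), or equivalently take $\mathbf{I}_\mu=\frac{\int g\,d\mu}{\int f\,d\mu}$ for $\mu\in\mathcal{P}(\phi)$, which is the form the present paper actually uses later (Theorem~\ref{Thm:Fins1}). With the ratio form your inequality $\sup_a\le\sup_\mu$ is immediate and needs no renormalization at all, and the reverse inequality goes through as you outline: weak-$*$ density of periodic orbital measures in $\mathcal{P}(\phi)$ (a Sigmund-type fact, valid for topologically transitive metric Anosov flows via their symbolic dynamics/closing lemma, and this is the external input to cite), continuity of $\mu\mapsto\frac{\int g\,d\mu}{\int f\,d\mu}$ since $f\geq\min f>0$, and the mediant inequality $\frac{\sum_i t_i b_i}{\sum_i t_i c_i}\leq\max_i\frac{b_i}{c_i}$ --- note it is the ratio of the convex combinations, not a convex combination of ratios, that occurs. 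You are also correct that neither Liv\v{s}ic's theorem nor analyticity of pressure is needed for this statement.
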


\begin{lemma}[\cite{carvajales2024thurston}]\label{Lem:Ther-Equ}
    For $\psi, \widehat{\psi}\in \HR(\phi)$, the following are equivalent:

    $(1)$ For every $a \in O$, $h_{\widehat{\psi}} p_{\widehat{\psi}}(a) = h_{\psi} p_{\psi}(a)$.

    $(2)$ $(h_{\widehat{\psi}}g) \sim_{\phi} (h_{\psi}f)$
\end{lemma}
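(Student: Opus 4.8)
\textbf{Proof proposal for Lemma~\ref{Lem:Ther-Equ}.}

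The plan is to prove the two implications separately, using the Liv\v{s}ic cohomology theorem (Theorem~\ref{thm:Livsic}) together with the characterization of the entropy of a reparametrization in terms of periods. The key observation tying everything together is that, for $\psi = \phi^f \in \HR(\phi)$ and a periodic orbit $a \in O$ with $x \in a$, the period $p_\psi(a)$ is exactly $\int_0^{p_\phi(a)} f(\phi_t(x))\, dt$; this follows directly from the defining equation of the H\"older reparametrization, since flowing once around $a$ under $\psi$ corresponds to flowing once around $a$ under $\phi$, and the time rescaling integrates $f$ along the orbit. The analogous identity holds for $\widehat\psi = \phi^g$ with $g$ in place of $f$.

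For the implication $(1) \Rightarrow (2)$: assume $h_{\widehat\psi}\, p_{\widehat\psi}(a) = h_\psi\, p_\psi(a)$ for every $a \in O$. Rewriting the periods as orbital integrals, this says
\[
\int_0^{p_\phi(a)} (h_{\widehat\psi}\, g)(\phi_t(x))\, dt = \int_0^{p_\phi(a)} (h_\psi\, f)(\phi_t(x))\, dt
\]
for every periodic orbit $a$ and every $x \in a$. Since $h_{\widehat\psi} g$ and $h_\psi f$ are H\"older continuous (being positive scalar multiples of H\"older functions) and $\phi$ is a topologically transitive metric Anosov flow, Liv\v{s}ic's theorem (Theorem~\ref{thm:Livsic}) immediately yields $(h_{\widehat\psi}\, g) \sim_\phi (h_\psi\, f)$, which is $(2)$.

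For the implication $(2) \Rightarrow (1)$: assume $(h_{\widehat\psi}\, g) \sim_\phi (h_\psi\, f)$. By Liv\v{s}ic's theorem, the two functions have equal orbital integrals over every $a \in O$, which by the period formula above gives $h_{\widehat\psi}\, p_{\widehat\psi}(a) = h_\psi\, p_\psi(a)$ for all $a$, i.e.\ $(1)$. I expect the main (though still mild) obstacle to be the careful verification of the period formula $p_{\phi^f}(a) = \int_0^{p_\phi(a)} f(\phi_t(x))\, dt$ and the independence of the relevant quantities from the choice of basepoint $x \in a$; once that bookkeeping is in place, both directions are a direct appeal to Liv\v{s}ic. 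One should also note that the statement does not require invoking Proposition~\ref{prop: pressureZero}, though that proposition gives an alternative route to seeing that $h_\psi f$ and $h_{\widehat\psi} g$ are the natural normalized potentials, which provides useful context.
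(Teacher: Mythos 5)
Your argument is correct and is essentially the same as the one in the cited source \cite{carvajales2024thurston} (the paper itself states this lemma without proof): the period identity $p_{\phi^f}(a)=\int_0^{p_\phi(a)}f(\phi_t(x))\,dt$ converts condition $(1)$ into equality of orbital integrals of the positive H\"older functions $h_{\widehat\psi}g$ and $h_\psi f$, and Liv\v{s}ic's theorem (Theorem~\ref{thm:Livsic}, valid here since $\phi$ is topologically transitive metric Anosov) gives both implications. The bookkeeping you flag is harmless, since the integral over a full period is independent of the basepoint $x\in a$, and multiplying a H\"older function by the positive finite constants $h_\psi,h_{\widehat\psi}$ preserves H\"older continuity.
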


\subsection{H\"{o}lder reparametrizations}\label{hr}
We say that $\psi \sim \widehat{\psi}$ in $\HR(\phi)$ if $h_{\psi} f$ and $h_{\widehat{\psi}}g$ are Liv\v{s}ic cohomologous, and denote by $\bP\HR(\phi): = \HR(\phi)/\sim$, and $[\psi] \in \bP\HR(\phi)$ the equivalence class of $\psi$.

Let $\mathcal{H}^{\alpha}(X)$ denote the Banach space of $\upsilon$-H\"older continuous functions on $X$. By Liv\v{s}ic's Theorem \ref{thm:Livsic}, the subspace $\mathcal{B}^{\upsilon}(X) \subset \mathcal{H}^{\upsilon}(X)$, consisting $\upsilon$-H\"older continuous functions Liv\v{s}ic cohomologous to zero, is closed.  Hence the resulting quotient space $$\mathcal{L}^{\upsilon}(X): = \mathcal{H}^{\upsilon}(X)/\mathcal{B}^{\upsilon}(X)$$ is a Banach space. 

We define $\HR^{\upsilon}(\phi)$ be the set  of reparametrizations of $\phi^f$ of $\phi$ such that  $f \in \mathcal{H}^{\upsilon}(X)$, and denote its image in the projective space $\bP\HR(\phi)$ by $\bP\HR^{\upsilon}(\phi)$.

We also define the set of  Liv\v{s}ic cohomology class of pressure-zero H\"older continuous functions $$\mathcal{P}^{\upsilon}(X) :=\left\{[f]_{\phi}\in\mathcal{L}^{\alpha}(X): \mathbf{P}(\phi,-f)=0\right\}.$$

Then $\bP\HR^{\upsilon}(\phi)$ identifies   with the open subset of $\mathcal{P}^{\upsilon}(X)$ consisting of Liv\v{s}ic cohomology classes of pressure zero, positive, $\upsilon$-H\"older continuous functions on $X$.  For any $[f]_{\phi} \in \mathcal{P}^{\upsilon}(X)$,  the tangent space at $[f]_{\phi}$ is given by $$T_{[r]_{\phi}}\mathcal{P}^{\upsilon}(X)=\left\{[g]_{\phi}\in\mathcal{L}^{\upsilon}(X) \mid  \int g d\mu_{-f}=0 \right\},$$ where $\mu_{-f}$ is  the equilribium state of $-f$ with respect to $\phi$. The tangent space  of $\bP\HR^{\upsilon}(\phi)$ at a point  $[\psi]$, is given by $$T_{[\psi]}\bP\HR^{\upsilon}(\phi):=T_{[f]_{\phi}}\mathcal{P}^{\upsilon}(X).$$

The following metric was constructed in \cite{carvajales2024thurston}, and we will pull it back via a thermodynamical map to define an asymmetric metric in our context.

\begin{thm}[{\cite{carvajales2024thurston}}]\label{Th:Thasm-Therm}
Let $[\psi],[\widehat{\psi}] \in \bP\HR(\phi)$ be two reparametrization classes. Then the function 
    $$\sd_{\Th}([\psi], [\widehat{\psi}]) = \log \left(\sup_{a \in O} \frac{h_{\hat{\psi}}}{h_{\psi}}  \frac{p_{\widehat{\psi}}(a)}{p_{\psi} (a)}\right)$$
defines an asymmetric distance on $\bP\HR(\phi)$.
\end{thm}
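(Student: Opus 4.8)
The plan is to verify the three defining properties of an asymmetric distance for
$$\sd_\Th([\psi],[\widehat\psi])=\log\Big(\sup_{a\in O}\frac{h_{\widehat\psi}}{h_\psi}\frac{p_{\widehat\psi}(a)}{p_\psi(a)}\Big)=\log\Big(\sup_{a\in O}\frac{h_{\widehat\psi}\,p_{\widehat\psi}(a)}{h_\psi\,p_\psi(a)}\Big):$$
namely that it is real valued and $\ge 0$, that it satisfies the oriented triangle inequality $\sd_\Th([\psi],[\widehat\psi])\le\sd_\Th([\psi],[\chi])+\sd_\Th([\chi],[\widehat\psi])$, and that it separates points, i.e. $\sd_\Th([\psi],[\widehat\psi])=0$ exactly when $[\psi]=[\widehat\psi]$; no symmetry is asserted. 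First, I would check $\sd_\Th$ is well defined on $\bP\HR(\phi)$: writing $\psi=\phi^f$, $\widehat\psi=\phi^g$, the renormalized period $h_{\widehat\psi}p_{\widehat\psi}(a)=\int_0^{p_\phi(a)}h_{\widehat\psi}g(\phi_t x)\,dt$ depends, by Liv\v{s}ic's Theorem~\ref{thm:Livsic}, only on the Liv\v{s}ic class of $h_{\widehat\psi}g$, hence only on $[\widehat\psi]$ (and similarly for $[\psi]$); and since $f,g$ are positive H\"older functions on the compact space $X$ they are bounded between positive constants, so $p_{\widehat\psi}(a)/p_\psi(a)\in[\inf(g/f),\sup(g/f)]$ uniformly in $a$, which makes the supremum finite and positive. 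The triangle inequality is then the formal observation that for each $a\in O$,
$$\frac{h_{\widehat\psi}p_{\widehat\psi}(a)}{h_\psi p_\psi(a)}=\frac{h_{\widehat\psi}p_{\widehat\psi}(a)}{h_\chi p_\chi(a)}\cdot\frac{h_\chi p_\chi(a)}{h_\psi p_\psi(a)}\le\Big(\sup_{b\in O}\frac{h_{\widehat\psi}p_{\widehat\psi}(b)}{h_\chi p_\chi(b)}\Big)\Big(\sup_{c\in O}\frac{h_\chi p_\chi(c)}{h_\psi p_\psi(c)}\Big),$$
after which one takes the supremum over $a$ and the logarithm.

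For non-negativity I would argue by contradiction via the periodic-orbit counting asymptotic recalled above. If $\sd_\Th([\psi],[\widehat\psi])<0$ there is $\lambda\in(0,1)$ with $p_{\widehat\psi}(a)\le\frac{\lambda h_\psi}{h_{\widehat\psi}}p_\psi(a)$ for all $a\in O$, hence $\#\{a:p_\psi(a)\le t\}\le\#\{a:p_{\widehat\psi}(a)\le\frac{\lambda h_\psi}{h_{\widehat\psi}}t\}$ for all $t>0$; taking $\frac1t\log$ of both sides and using that these counting functions grow with exponential rates $h_\psi$ and $h_{\widehat\psi}$ respectively gives $h_\psi\le\lambda h_\psi$, contradicting $h_\psi>0$. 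Equivalently one can combine the intersection-number formula with the variational principle applied to the Bowen--Margulis measure of the entropy-normalized reparametrization $\phi^{h_\psi f}$.

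The delicate point is that $\sd_\Th([\psi],[\widehat\psi])=0$ forces $[\psi]=[\widehat\psi]$; the converse is immediate since, by the definition of $\bP\HR(\phi)$ in Subsection~\ref{hr} together with Theorem~\ref{thm:Livsic} (and Lemma~\ref{Lem:Ther-Equ}), $[\psi]=[\widehat\psi]$ makes every ratio $h_{\widehat\psi}p_{\widehat\psi}(a)/h_\psi p_\psi(a)$ equal to $1$. For the forward direction I would set $\bar f:=h_\psi f$ and $\bar g:=h_{\widehat\psi}g$, so that $\mathbf{P}(\phi,-\bar f)=\mathbf{P}(\phi,-\bar g)=0$ by Proposition~\ref{prop: pressureZero}, and note that $\sd_\Th([\psi],[\widehat\psi])=0$ says $\int_0^{p_\phi(a)}\bar g(\phi_t x)\,dt\le\int_0^{p_\phi(a)}\bar f(\phi_t x)\,dt$ for all $a\in O$; by weak-$*$ density of periodic-orbit measures in $\mathcal{P}(\phi)$ this upgrades to $\int(\bar g-\bar f)\,d\mu\le 0$ for every $\mu\in\mathcal{P}(\phi)$. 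I would then study $P(s):=\mathbf{P}\big(\phi,-(1-s)\bar g-s\bar f\big)$ on $[0,1]$: it is convex (the restriction of the convex pressure functional to an affine line) and analytic (Theorem~\ref{Analyticity of pressure}), with $P(0)=P(1)=0$, and by the derivative formula of Theorem~\ref{Analyticity of pressure} one has $P'(0)=\int(\bar g-\bar f)\,d\mu_{-\bar g}\le 0$ and $P'(1)=\int(\bar g-\bar f)\,d\mu_{-\bar f}\le 0$. Convexity of $P$ forces $P'\le 0$ throughout $[0,1]$, so $P$ is non-increasing there, hence $P\equiv 0$ on $[0,1]$; by analyticity $P\equiv 0$, so its second derivative, the asymptotic variance of $\bar g-\bar f$ with respect to the relevant equilibrium state, vanishes, which forces $\bar g-\bar f$ to be Liv\v{s}ic cohomologous to a constant $c$. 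Finally $0=\mathbf{P}(\phi,-\bar g)=\mathbf{P}(\phi,-\bar f-c)=\mathbf{P}(\phi,-\bar f)-c=-c$, so $c=0$, $\bar g\sim_\phi\bar f$, i.e. $h_{\widehat\psi}g\sim_\phi h_\psi f$, which by definition is precisely $[\psi]=[\widehat\psi]$.

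I expect this last step, the rigidity in the separation of points, to be the main obstacle: a priori $\sd_\Th([\psi],[\widehat\psi])=0$ yields only the one-sided comparison $h_{\widehat\psi}p_{\widehat\psi}(a)\le h_\psi p_\psi(a)$, and it is exactly the entropy normalization $\mathbf{P}(\phi,-h_\psi f)=\mathbf{P}(\phi,-h_{\widehat\psi}g)=0$ combined with the strict convexity and analyticity of the pressure that promotes this to equality for every periodic orbit. An alternative route would be to apply a positive Liv\v{s}ic theorem to write $\bar f-\bar g$ as Liv\v{s}ic cohomologous to a non-negative function and then invoke strict monotonicity of the pressure functional on the full-support system $\phi$.
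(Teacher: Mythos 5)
Your argument is correct, but there is nothing in the paper to compare it with: Theorem~\ref{Th:Thasm-Therm} is imported verbatim from \cite{carvajales2024thurston} and used as a black box, with no proof given here. What you have written is a faithful reconstruction along the same thermodynamic lines as the cited source, using exactly the ingredients the paper quotes in Section~\ref{Section 2}: well-definedness and finiteness of the supremum via Liv\v{s}ic's Theorem~\ref{thm:Livsic} and the uniform bounds $\inf(g/f)\le p_{\widehat\psi}(a)/p_\psi(a)\le\sup(g/f)$; the triangle inequality by factoring the renormalized period ratio; non-negativity by the orbit-counting characterization of $h_{\psi}$ and $h_{\widehat\psi}$ (your contradiction $h_\psi\le\lambda h_\psi$ is exactly right); and separation by running the pressure function $P(s)=\mathbf{P}\bigl(\phi,-(1-s)h_{\widehat\psi}g-sh_\psi f\bigr)$ along the affine segment, using convexity, the endpoint normalizations $P(0)=P(1)=0$, the sign of the derivative from Theorem~\ref{Analyticity of pressure}, vanishing of the second derivative (asymptotic variance), and the pressure normalization to kill the constant. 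Two small polish points. First, rather than appealing to weak-$*$ density of periodic-orbit measures to upgrade the orbitwise inequality to $\int(\bar g-\bar f)\,d\mu\le 0$ for all $\mu\in\mathcal P(\phi)$, you can invoke directly the quoted intersection-number identity $\sup_{a\in O}p_{\widehat\psi}(a)/p_\psi(a)=\sup_{\mu\in\mathcal P(\phi)}\mathbf{I}_\mu(\psi,\widehat\psi)$ from \cite{carvajales2024thurston}, which is already stated in the preliminaries and encapsulates precisely that density statement. Second, the rigidity fact you use at the end --- that vanishing asymptotic variance of $\bar g-\bar f$ with respect to an equilibrium state forces $\bar g-\bar f$ to be Liv\v{s}ic cohomologous to a constant --- is standard for topologically transitive metric Anosov flows but is not among the facts recalled in this paper, so it should carry its own citation (e.g.\ \cite{parry1990zeta} or \cite{bridgeman2015pressure}). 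Neither point is a gap; the proof stands as written.
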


Let $\lVert\cdot\rVert_\sd$

\begin{thm}[{\cite{carvajales2024thurston}}]\label{Thm:Fins1}
 Let $[\psi] \in \bP\HR^{\upsilon}(\phi)$ and $[g]_{\phi} \in T_{[\psi]}\bP\HR^{\upsilon}(\phi)$. Let  
$$\Vert [g]_{\phi} \Vert_{\sd}:=\displaystyle\sup_{m\in\mathcal{P}(\phi)}\frac{\int g d\mu}{\int f d\mu}.$$
Then $\Vert [g]_{\phi} \Vert_{\sd} \ge 0$ and $\Vert [g]_{\phi} \Vert_{\sd} = 0$ if and only if $[g]=0$. Moreover, $\lVert\cdot\rVert_{\sd}$ is $\bR_{>0}$-homogeneous and satisfies triangle inequality. Therefore, $\lVert\cdot\rVert_{\sd}$ defines a Finsler norm on the tangent space $T_{[\psi]}\bP\HR^{\alpha}(\phi)$. 
\end{thm}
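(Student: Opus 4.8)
The plan is to verify the Finsler-norm axioms one at a time, working throughout with the distinguished representative $f$ of the class $[\psi]$ — the positive $\upsilon$-H\"older function normalized by $\mathbf{P}(\phi,-f)=0$ (equivalently $h_{\phi^f}=1$), the normalization under which $\bP\HR^\upsilon(\phi)$ was identified in Subsection~\ref{hr} with an open set of pressure-zero classes. Recall that $[g]_\phi \in T_{[\psi]}\bP\HR^\upsilon(\phi)$ means precisely $\int g\, d\mu_{-f}=0$, where $\mu_{-f}$ is the equilibrium state of $-f$. Two remarks dispatch the routine parts. First, since $X$ is compact and $f$ is continuous and strictly positive, $c:=\min_X f>0$, so $\int f\,d\mu \ge c$ for all $\mu$ in $\mathcal{P}(\phi)$ (the $\phi$-invariant probability measures) and hence $\|[g]_\phi\|_\sd \le \|g\|_\infty/c < \infty$; moreover a Liv\v{s}ic coboundary integrates to zero against every $\phi$-invariant measure, so the quantity depends only on the classes $[g]_\phi$ and $[\psi]$ and is well defined. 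Granting this, three axioms are immediate: testing the supremum against $\mu_{-f}$ and using $\int g\,d\mu_{-f}=0$ gives $\|[g]_\phi\|_\sd \ge 0$; for $\lambda>0$ one has $\|\lambda[g]_\phi\|_\sd = \lambda\|[g]_\phi\|_\sd$ since the supremum scales; subadditivity of the supremum applied to $\mu \mapsto \int g_1\,d\mu/\int f\,d\mu$ and $\mu \mapsto \int g_2\,d\mu/\int f\,d\mu$ gives the triangle inequality; and if $[g]_\phi=0$ then $\int g\,d\mu=0$ for all $\mu$, so $\|[g]_\phi\|_\sd=0$.

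The one substantial point is the converse, $\|[g]_\phi\|_\sd = 0 \Rightarrow [g]_\phi = 0$. The hypothesis says only that $\int g\,d\mu \le 0$ for every $\mu \in \mathcal{P}(\phi)$, and I would exploit the pressure function $P(s):=\mathbf{P}(\phi,-f+sg)$. From the variational principle (Definition~\ref{Def: Variation}) and $\sup_\mu \int g\,d\mu \le 0$ one gets $P(s) \le \mathbf{P}(\phi,-f) + s\sup_\mu \int g\,d\mu \le 0$ for all $s \ge 0$. On the other hand $P$ is convex, being a supremum of functions affine in $s$, and by Theorem~\ref{Analyticity of pressure} it is differentiable with $P'(0)=\int g\,d\mu_{-f}=0$; hence $P$ lies above its tangent line at $0$, giving $P(s) \ge P(0) = 0$ for all $s$. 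Thus $P \equiv 0$ on $[0,\infty)$. Fixing any $s_0 > 0$, Theorem~\ref{Analyticity of pressure} gives $\int g\,d\mu_{-f+s_0 g} = P'(s_0) = 0$, so evaluating the variational principle at the equilibrium state $\mu_{-f+s_0 g}$ yields $h(\phi,\mu_{-f+s_0 g}) + \int(-f)\,d\mu_{-f+s_0 g} = P(s_0) = 0 = \mathbf{P}(\phi,-f)$; therefore $\mu_{-f+s_0 g}$ is an equilibrium state of $-f$, and by uniqueness of equilibrium states it equals $\mu_{-f}$. Two H\"older potentials with the same equilibrium state differ by a function Liv\v{s}ic cohomologous to a constant, so $s_0 g$ — hence $g$ — is cohomologous to some constant $\kappa$; integrating against $\mu_{-f}$ forces $\kappa = \int g\,d\mu_{-f} = 0$, i.e.\ $[g]_\phi = 0$. (Equivalently, $P \equiv 0$ on an interval forces its second derivative, the asymptotic variance of $g$ along $\phi$, to vanish there, which again means $g$ is cohomologous to a constant.)

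Putting the pieces together — $\|\cdot\|_\sd$ is real-valued, non-negative, vanishes only on the zero class, is $\bR_{>0}$-homogeneous, and is subadditive (whence also continuous, by the bound $\|[g]_\phi\|_\sd \le \|g\|_\infty/c$ together with the triangle inequality) — shows it is a Finsler norm on $T_{[\psi]}\bP\HR^\upsilon(\phi)$. The main obstacle is the converse of non-degeneracy above: the tangent space is cut out by the single linear constraint $\int g\,d\mu_{-f}=0$, which a priori is far weaker than $\int g\,d\mu = 0$ for every $\mu$, and it is precisely the rigidity built into the thermodynamic formalism — convexity of the pressure and uniqueness of equilibrium states — that upgrades the sign condition $\int g\,d\mu \le 0$ to the coboundary conclusion.
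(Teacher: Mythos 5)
The paper does not actually prove Theorem~\ref{Thm:Fins1}: it is quoted from \cite{carvajales2024thurston}, so there is no internal argument to compare against, and your proposal should be judged on its own. It is correct, and it runs along the same thermodynamic lines as the source. The routine axioms are handled properly (finiteness from $\min_X f>0$ on the compact space $X$, well-definedness because Liv\v{s}ic coboundaries integrate to zero against every $\phi$-invariant probability measure, non-negativity by testing the supremum at $\mu_{-f}$ using the tangency condition $\int g\,d\mu_{-f}=0$, homogeneity and subadditivity of the supremum), and the one substantial step --- upgrading ``$\int g\,d\mu\le 0$ for all $\mu\in\mathcal P(\phi)$ and $\int g\,d\mu_{-f}=0$'' to ``$[g]_\phi=0$'' --- is done by the right mechanism: the convex, analytic pressure function $P(s)=\mathbf{P}(\phi,-f+sg)$ is squeezed between the affine upper bound and its tangent line at $0$ (Theorem~\ref{Analyticity of pressure}), hence vanishes identically on $[0,\infty)$, and then either uniqueness of equilibrium states (two H\"older potentials with the same equilibrium state differ by a coboundary plus a constant) or the vanishing of $P''$ (the variance) forces $g$ to be Liv\v{s}ic cohomologous to the constant $\int g\,d\mu_{-f}=0$. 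Two points should be made explicit rather than assumed: (i) that coboundaries integrate to zero against all invariant measures follows from the boundedness of $h\circ\phi_T-h$ together with the Birkhoff theorem and ergodic decomposition; (ii) the rigidity statement ``same equilibrium state (equivalently, vanishing variance) implies the difference of H\"older potentials is cohomologous to a constant'' is not among the preliminaries of this paper and needs a citation valid for metric Anosov flows via Markov codings, e.g.\ \cite{parry1990zeta} or \cite{bridgeman2015pressure}; with those references supplied, your proof is complete and matches the argument of \cite{carvajales2024thurston} in spirit.
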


\subsection{Thermodynamics of Margulis spacetimes}\label{Def: Pull back}

Let $\rho \in \Hom_\M(\Gamma, \sG)$, then the linear part $\tL \in \Hom^\Z(\Gamma, \sG^\tL, \sP^\pm)$. Since $\Gamma$ be a word hyperbolic group admitting an Anosov representation, then by Theorem~\ref{Th:MetricAnosov}, $\U\Gamma$ admits a topologically transitive metric Anosov flow $\phi$. 

\begin{dfn}
    The \emph{topological entropy} of $\rho \in \Hom_\M(\Gamma, \sG)$ is defined by 
\[
h_{\rho}:=h_\Top (\phi^{f_{\rho}}) =  \lim_{T \to \infty} \frac{1}{T}\log\left|R_T(\rho)\right|
\]
where $R_T(\rho) = R_T(f_{\rho})= \{\gamma \in O \mid \alpha_{\rho}(\gamma) \leq T\}$.  By \cite[Lemma~3.1]{ghosh2023affine}, the quantity $h_{\rho}$ is a finite positive number.
\end{dfn}

We now consider the following map
\begin{align*}
    \tR: \Hom_\M(\Gamma, \sG) &\to \bP\HR(\phi)\\
    \rho &\mapsto [\phi^{h_{\rho}f_{\rho}}]
\end{align*} 
We note that by Remark~\ref{Rmk: Conjugacy invar Margulis}, $\int_{\gamma} f_{g\rho g^{-1}}  d\mu_{\gamma} = \frac{\alpha_{g\rho g^{-1}}(\gamma)}{\ell(\gamma)} = \frac{\alpha_{\rho}(\gamma)}{\ell(\gamma)} = \int_{\gamma}f_{\rho} d\mu_{\gamma}$ for all $\gamma \in O$. By the Liv\v{s}ic Theorem~\ref{thm:Livsic}, this implies that $f_{g\rho g^{-1}}$ is Liv\v{s}ic cohomologous to $f_{\rho}$. Consequently, Lemma~\ref{Lem:Ther-Equ}, yields $\tR(g\rho g^{-1}) = \tR(\rho)$. In particular, $\tR$ is constant on conjugacy classes. It follows that $\tR$ descends to the quotient, and we continue to denote the induced map by $\tR$
\begin{align*}
    \tR: \M(\Gamma, \sG) &\to \bP\HR(\phi)\\
    [\rho] &\mapsto [\phi^{h_{\rho}f_{\rho}}]
\end{align*} 
For notational convenience, we will abuse notation and write $\rho$ in place of its conjugacy class $[\rho]$ throughout the paper.

        \subsection{Anosov representations}
    Anosov representations are discrete, faithful representations of a word hyperbolic group into a semisimple Lie group with strong dynamical and geometric properties. Introduced by Labourie \cite{MR2221137} in the study of Hitchin representations, their theory was further developed by Guichard and Wienhard \cite{guichard2012anosov}, as well as Kapovich, Leeb, and Porti \cite{MR3569569}, \cite{MR3888689}, who provided a comprehensive framework and structural results. These representations admit well-behaved limit maps and satisfy stability properties, making them a natural higher-rank analogue of classical Teichmüller theory.
    
The dynamics of Anosov representations can be studied via the Gromov geodesic flow associated to a word hyperbolic group $\Gamma$. This flow space, which generalizes the unit tangent bundle of a negatively curved manifold, provides a natural setting to encode the asymptotic geometry of $\Gamma$. Via the equivariant limit maps of an Anosov representation, one can pull back the flow dynamics into associated bundles, allowing the definition of analytic invariants such as entropy, intersection, and the pressure metric.  In the following we define such representations in our setting. We follow the definition from \cite{ghosh2023margulis}.
    
        Let $\Gamma$ be a word hyperbolic group and $\partial_\infty \Gamma$ be its
boundary at infinity. The group $\Gamma$ acts naturally on
$\partial_\infty \Gamma$. Let $\partial_\infty \Gamma^{(2)}$ be the
subset of $\partial_\infty \Gamma \times \partial_\infty \Gamma$
obtained by removing the diagonal. 
Gromov~\cite{gromov1987hyperbolic} constructed a proper and cocompact action of $\Gamma$
on $\partial_\infty \Gamma^{(2)} \times \bR$, which extends the diagonal
action of $\Gamma$ on $\partial_\infty \Gamma^{(2)}$. 
The space $\partial_\infty \Gamma^{(2)} \times \bR$, equipped with this
action, will be denoted by $\widetilde{\U\Gamma}$, and we refer to the
action as the $\Gamma$–action on $\partial_\infty \Gamma^{(2)} \times
\bR$. 
The $\Gamma$–action commutes with the $\bR$–action
\[
(x,y,s) \longmapsto (x,y,s+t).
\]
The \emph{Gromov flow space} of $\Gamma$, denoted by $\U\Gamma$, is the
quotient space $\widetilde{\U\Gamma} / \Gamma$. 
We denote the flow on $\U\Gamma$ by $\varphi = (\varphi_t : \U\Gamma \to \U\Gamma)$.

Let $\sG^\tL$ be a semisimple Lie group and $(\sP_+,\sP_-)$ a pair of opposite
parabolic subgroups of $\sG^\tL$. Then the quotients $\sG^\tL/\sP_\pm$ are compact
homogeneous spaces. We consider the homogeneous space $\sG^\tL/\sL$, where
$\sL := \sP_+ \cap \sP_-$. If $\sG^\tL$ acts diagonally on $\sG^\tL/\sP_+ \times
\sG^\tL/\sP_-$, then $\sL$ is the stabilizer of $(e\sP_+, e\sP_-)$, and we identify
$\sG^\tL/\sL$ with an open orbit in $\sG^\tL/\sP_+ \times \sG^\tL/\sP_-$.

\begin{dfn}
    An injective homomorphism $\rho: \Gamma \to \sG^\tL $ is said to be  $(\sP^+,\sP^-)$-Anosov if and only if the following conditions hold: 

    $(1)$ There exists continuous, injective, $\rho(\Gamma)$-equivariant Anosov maps 
    $$\xi^\pm: \partial_{\infty}\Gamma \to \sG^\tL/\sP^\pm$$ such that $\xi(p):=(\xi^+(p_+), \xi^-(p_-)) \in \sG^\tL/\sL$ for any $p:= (p_+, p_-, t) \in \U\Gamma$.

    $(2)$ There exist positive constants $c, C$ and for $p \in \widetilde{\U\Gamma}$ a collection of Euclidean norm $\lVert \cdot\lVert$ on $T_{\xi(p)}\sG/\sL$ such that for all $\gamma \in \Gamma$ and $v \in T_{\xi(p)}\sG^\tL/\sL$ we have $\lVert \rho(\gamma)v\rVert_{\gamma p} = \lVert v \rVert_{\gamma}$ and for all  $v^+ \in T_{\xi^+(p_+)}\sG^\tL/\sP^+$ (resp. $v^- \in T_{\xi^-(p_-)}\sG^\tL/\sP^-$) and $t\geq 0$: $$\lVert v^+\rVert_{\phi_{t} p} \leq C e^{-ct} \lVert v^+\rVert_p \ \ (\text{resp.} \lVert v^-\rVert_{\phi_{-t} p} \leq C e^{-ct} \lVert v^+\rVert_p).$$
\end{dfn}

\begin{dfn}
We denote the space of all representations $\rho: \Gamma \to \sG^\tL$ by $\Hom(\Gamma, \sG^\tL)$, the space all $(\sP^+, \sP^-)$-Anosov representations by $\Hom(\Gamma, \sG^\tL, \sP^{\pm})$, and the space of space of Zariski dense Anosov representations by $\Hom^\Z(\Gamma, \sG^\tL, \sP^\pm)$.
\end{dfn}

\begin{thm}[{\cite{guichard2012anosov}}]
    The set $\Hom(\Gamma, \sG^\tL, \sP^\pm)$ is open in  $\Hom(\Gamma, \sG^\tL)$. 
\end{thm}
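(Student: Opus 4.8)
The plan is to prove openness by establishing the \emph{structural stability} of the contraction/expansion data defining the Anosov condition, rephrased as the persistence of an exponential dichotomy for a continuous linear flow over the compact space $\U\Gamma$.

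First I would reformulate condition $(2)$ of the definition in bundle-theoretic terms. To a representation $\rho:\Gamma\to\sG^\tL$ associate the flat bundle $E_\rho:=\widetilde{\U\Gamma}\times_\rho(\sG^\tL/\sP^+\times\sG^\tL/\sP^-)$ over $\U\Gamma$ together with the lift of the Gromov flow $\varphi$. A $(\sP^+,\sP^-)$-Anosov structure for $\rho$ is exactly a continuous $\varphi$-invariant section $\sigma_\rho$ of $E_\rho$ (equivalently, a reduction to the $\sG^\tL/\sL$-bundle with $\sL=\sP^+\cap\sP^-$) such that, along $\sigma_\rho$, the derivative cocycle induced by $\varphi$ on the vertical tangent bundle is uniformly contracted in the $T(\sG^\tL/\sP^+)$-directions and uniformly dilated in the $T(\sG^\tL/\sP^-)$-directions, with the constants $c,C$ of the definition. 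The key observation is that this cocycle, and the whole bundle datum, depends continuously — in fact real-analytically — on $\rho\in\Hom(\Gamma,\sG^\tL)$, since it is assembled from $\rho$ evaluated on a finite generating set of $\Gamma$.

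The heart of the argument is the persistence of the invariant section and of its hyperbolic splitting under $C^0$-small perturbations of the cocycle. Fix $\rho_0\in\Hom(\Gamma,\sG^\tL,\sP^\pm)$ with invariant section $\sigma_{\rho_0}$. Working in the Banach space of continuous sections of the relevant flag bundle over the compact space $\U\Gamma$, the requirement that $\sigma$ be $\varphi$-invariant becomes a fixed-point equation whose associated graph transform is a contraction: the contraction rate is supplied precisely by the spectral gap coming from condition $(2)$ for $\rho_0$ (this is the Hadamard–Perron / exponential-dichotomy mechanism, carried out in the Hölder — not smooth — category, since $\varphi$ is only a Hölder metric Anosov flow). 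Hence for $\rho$ in a neighborhood of $\rho_0$ there is a unique nearby $\varphi$-invariant section $\sigma_\rho$, depending continuously on $\rho$, and a standard estimate shows that the contraction/dilation inequalities persist with slightly weakened but still uniform exponential constants $c',C'$.

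Finally I would recover the equivariant boundary maps. The persisting section $\sigma_\rho$, being $\varphi$-invariant and (after lifting to $\widetilde{\U\Gamma}$) $\rho(\Gamma)$-equivariant, has its $\sG^\tL/\sP^+$-component constant along forward flow lines and its $\sG^\tL/\sP^-$-component constant along backward flow lines; hence it factors through continuous, injective, $\rho(\Gamma)$-equivariant maps $\xi^\pm_\rho:\partial_\infty\Gamma\to\sG^\tL/\sP^\pm$ with $\xi_\rho(p)\in\sG^\tL/\sL$ for $p\in\U\Gamma$, injectivity and transversality following from the attracting/repelling dynamics forced by the persisting contraction estimates. Together with those uniform estimates this shows every such $\rho$ is $(\sP^+,\sP^-)$-Anosov, proving openness. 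I expect the main obstacle to be the third step: setting up the correct Banach space of sections and checking that the linearized fixed-point operator is invertible with the right norm bounds, so that the perturbed section genuinely inherits \emph{uniform} exponential — not merely asymptotic — contraction; the low regularity of $\varphi$ means one cannot simply invoke the smooth stable-manifold theorem and must instead exploit the metric Anosov structure of the Gromov flow directly. An alternative route would use the coarse-geometric (Morse, or uniform regularity) characterization of Anosov representations, for which openness is nearly immediate, but the dominated-splitting argument above is the one closest to the thermodynamic machinery recalled in Section~\ref{Section 2}.
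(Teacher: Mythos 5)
This statement is quoted in the paper without proof, as a citation of \cite{guichard2012anosov}, and your sketch is essentially the argument given there (going back to Labourie's structural stability): encode the Anosov condition as a flow-invariant section of an associated flag bundle with uniformly hyperbolic fiberwise dynamics, and deduce openness from persistence of such hyperbolic sections under small perturbations of the holonomy, recovering the limit maps and transversality from the persisting contraction estimates. So your approach matches the cited proof; no substantive divergence or gap to flag beyond the technical step you already identify (carrying out the graph-transform/fixed-point argument in the H\"older category over the metric Anosov Gromov flow).
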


We space of all $\upsilon$-H\"older continuous maps from $\partial_{\infty}\Gamma$ to $\sG^\tL/\sP^\pm$ by $\mathcal{C}^{\upsilon}(\partial_{\infty}\Gamma, \sG^\tL/\sP^{\pm})$.

\begin{thm}[{\cite[Theorem~6.1]{bridgeman2015pressure}}]
    Let $\rho_0 \in \Hom(\Gamma, \sG^\tL, \sP^\pm)$. Then there exist a open neighbourhood of $U_0 \subset \Hom(\Gamma, \sG^\tL, \sP^\pm)$  of $\rho_0$ such that, for all $\rho \in U_0$, the associated  Anosov maps $\xi_{\rho}^\pm \in \mathcal{C}^{\upsilon}(\partial_{\infty}\Gamma, \sG^\tL/\sP^\pm)$ for some $\alpha >0$. Moreover, the map 
    \begin{align*}
        U_0 &\to \mathcal{C}^{\upsilon}(\partial_{\infty}\Gamma, \sG^\tL/\sP^\pm)\\
        \rho &\mapsto \xi^{\pm}_{\rho} 
    \end{align*}
    is real-analytic.
\end{thm}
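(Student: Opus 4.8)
The statement is \cite[Theorem~6.1]{bridgeman2015pressure}; the plan is to realize the limit maps as the unique attracting fixed point of a fibrewise contraction on a Banach space of H\"older sections of a flat bundle, and then to invoke the analytic version of the parametrized contraction mapping theorem. First I would reformulate the Anosov condition dynamically. To $\rho \in \Hom(\Gamma, \sG^\tL, \sP^\pm)$ associate the flat bundles $\mathcal{E}_\rho^{\pm} := \widetilde{\U\Gamma} \times_\rho (\sG^\tL/\sP^{\pm})$ over $\U\Gamma$; the Gromov flow $\varphi$ lifts to flows $\widehat{\varphi}^{\rho,\pm}$ on $\mathcal{E}_\rho^{\pm}$ covering $\varphi$. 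By the dynamical characterization of Anosov representations (Labourie, Guichard--Wienhard), $\rho$ is $(\sP^+,\sP^-)$-Anosov precisely when there is a continuous $\widehat{\varphi}^{\rho,+}$-invariant section $\sigma_\rho^+$ of $\mathcal{E}_\rho^+$ along which the lifted flow contracts transversely in forward time (and dually for $\sigma_\rho^-$); $\Gamma$-equivariantly, $\sigma_\rho^+$ is the graph of $\xi_\rho^+ : \partial_\infty\Gamma \to \sG^\tL/\sP^+$, and the rates $c,C$ are those of the stability estimate in the definition.

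Next I would set up the fixed-point problem, fixing $\rho_0$. The flow on the \emph{base} $\U\Gamma$ is the Gromov flow, intrinsic to $\Gamma$ and independent of $\rho$; only the flat structure of $\mathcal{E}_\rho^{\pm}$ — that is, its holonomy, given by the matrix coefficients of $\rho(\gamma)$ — varies. Trivialize a neighbourhood of the graph of $\sigma_{\rho_0}^+$ in $\mathcal{E}_{\rho_0}^+$ using the unipotent radical of $\sP^-$ as a fibre chart, so nearby sections become vector-valued maps on a compact set. Choosing a return time $T>0$ so that the transverse contraction over time $T$ has factor strictly less than $1$, the time-$T$ graph transform $\Phi_\rho$, which pushes a section forward by $\widehat{\varphi}^{\rho,+}_{-T}$ and relabels along $\varphi_T$, is a $C^0$-contraction with unique fixed point $\sigma_\rho^+$. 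For H\"older control, note that $\varphi_T$ distorts the base metric by at most $e^{\kappa T}$ for some $\kappa$ depending on $\varphi$, while the transverse transport contracts by at most $Ce^{-cT}$; picking $\upsilon\in(0,1)$ with $\upsilon\kappa<c$ and $T$ large guarantees that $\Phi_\rho$ preserves, and is a contraction on, a large closed ball of $\mathcal{C}^{\upsilon}$ (in a norm equivalent to the ambient one). Since the Anosov condition is open (Guichard--Wienhard) and $c,C,\kappa$ vary continuously with $\rho$, one can fix a single $\upsilon$, $T$, and ball, uniformly for $\rho$ in a neighbourhood $U_0$ of $\rho_0$.

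Finally I would check analytic dependence. Because $\varphi_T$ is $\rho$-independent and the fibre transport is assembled from matrix coefficients of $\rho(\gamma)$ acting real-analytically on $\sG^\tL/\sP^+$, the map $(\rho,\sigma)\mapsto \Phi_\rho(\sigma)$, from $U_0\times\mathcal{C}^{\upsilon}$ to $\mathcal{C}^{\upsilon}$, is real-analytic: $\sigma$ enters only through pre-composition with the fixed map $\varphi_T$ and through an analytic-coefficient fibre action, neither of which loses regularity (in contrast to post-composition with a merely H\"older map). Applying the analytic implicit function theorem to $F(\rho,\sigma):=\sigma-\Phi_\rho(\sigma)$, whose partial differential in $\sigma$ is $\Id$ minus a contraction and hence invertible, yields that $\rho\mapsto\sigma_\rho^+$, and therefore $\rho\mapsto\xi_\rho^+$, is real-analytic with values in $\mathcal{C}^{\upsilon}(\partial_\infty\Gamma,\sG^\tL/\sP^+)$; the argument for $\xi_\rho^-$ is symmetric. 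The main obstacle I anticipate is the H\"older bookkeeping in the middle step: making precise that a single class $\mathcal{C}^{\upsilon}$ is preserved by the graph transform uniformly in $\rho$ requires carefully balancing the contraction exponent $c$ against the flow's regularity loss $\kappa$, and the non-flat fibres of $\mathcal{E}_\rho^{\pm}$ force one to work in charts with norms only equivalent to, not equal to, the ambient $\mathcal{C}^{\upsilon}$-norm; a secondary subtlety is verifying Banach-space real-analyticity, which hinges on the nonlinearity entering only through the pre-composition side so that no derivative is lost.
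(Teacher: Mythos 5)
This statement is quoted in the paper directly from \cite[Theorem~6.1]{bridgeman2015pressure} with no proof supplied, so there is no internal argument to compare against; your sketch in fact reconstructs the strategy of the cited source itself, namely realizing the limit maps as the attracting fixed section of a graph transform on H\"older sections of the associated flat bundle over the Gromov flow space and obtaining analytic dependence on $\rho$ from the analytic contraction-mapping/implicit function theorem (the mechanism behind BCLS's Theorem~6.5). Your outline, including the exponent balancing $\upsilon\kappa<c$ and the observation that the nonlinearity enters only through pre-composition with the fixed time-$T$ map and an analytic fibre action, is essentially correct and faithful to that original argument.
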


\begin{thm}[{\cite{bridgeman2015pressure}}] \label{Th:MetricAnosov}
    Let $\rho:\Gamma \to \sG^\tL$ be  an Anosov representation, then the associated flow $\phi: \U\Gamma \to \U\Gamma$ is a topologically transitive metric Anosov flow.
\end{thm}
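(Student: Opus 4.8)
The plan is to reduce the statement to the two defining features of a metric Anosov (Smale) flow in the sense of Bowen and Pollicott used throughout Section~\ref{Section 2} — namely (i) compactness together with a local product structure built from exponentially contracting, resp. expanding, local strong stable and strong unstable sets, with H\"older regularity of the flow and of the bracket, and (ii) topological transitivity — and to verify each directly from the $\delta$-hyperbolicity of $\Gamma$. Observe first that the flow $\phi$ appearing in the statement is exactly the Gromov geodesic flow $\varphi$ on $\U\Gamma=(\partial_\infty\Gamma^{(2)}\times\bR)/\Gamma$ described above; the Anosov representation $\rho$ enters only through the fact that its existence forces $\Gamma$ to be a non-elementary word hyperbolic group, which is all that the argument uses.

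First I would fix a good metric on the flow space. By Gromov's construction \cite{gromov1987hyperbolic} the $\Gamma$–action on $\widetilde{\U\Gamma}=\partial_\infty\Gamma^{(2)}\times\bR$ is proper and cocompact, so $\U\Gamma$ is compact. Following the refinement of this construction carried out in \cite{bridgeman2015pressure} (hyperbolic joins, in the spirit of Champetier and Mineyev), one equips $\widetilde{\U\Gamma}$ with a $\Gamma$–invariant, $\bR$–equivariant metric $\widetilde d$ which is H\"older-compatible with the flow and along which the $\bR$–direction is isometric up to bounded distortion; let $d$ be the induced metric on $\U\Gamma$. With respect to $d$ the flow $\phi$ is H\"older continuous.

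Next I would exhibit the hyperbolic local product structure. For a lift $\widetilde p=(x,y,s)$ of $p\in\U\Gamma$, let the local strong stable set of $p$ be the projection of the points $(x,y',s')$ with $y'$ near $y$ and $s'$ the natural synchronisation, and the local strong unstable set the projection of $(x',y,s')$ with $x'$ near $x$; the weak (flow) sets are obtained by additionally varying $s$. The decomposition of a neighbourhood of $(x,y)$ in $\partial_\infty\Gamma^{(2)}$ as a product of a neighbourhood of $x$ and a neighbourhood of $y$ furnishes the bracket map and hence the local product structure, and its H\"older continuity in the metric above is immediate from that of the boundary action. The contraction estimates are then the classical fact that two geodesic rays sharing a common endpoint in a $\delta$-hyperbolic space become exponentially close: this yields constants $C,c>0$ with $d(\phi_t p,\phi_t q)\le C e^{-ct}d(p,q)$ for $q$ in the local strong stable set of $p$ and $t\ge 0$, and symmetrically for the strong unstable set with $t\le 0$. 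Combined with compactness of $\U\Gamma$, this is precisely the definition of a metric Anosov flow.

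Finally, for topological transitivity I would use that the pairs $(\gamma^-,\gamma^+)$ of repelling/attracting fixed points of infinite-order elements $\gamma\in\Gamma$ are dense in $\partial_\infty\Gamma^{(2)}$, so the corresponding periodic orbits of $\phi$ are dense in $\U\Gamma$; together with the North–South dynamics of a pair of independent loxodromic elements of $\Gamma$ — available since $\Gamma$ is non-elementary — one produces, for any two nonempty open sets $U,V\subset\U\Gamma$, a flow line from $U$ to $V$, which gives transitivity. I expect the genuine obstacle to be the first step done honestly: producing a metric on $\widetilde{\U\Gamma}$ that is simultaneously $\Gamma$–invariant, flow-equivariant, and H\"older, so that the contraction estimates hold with \emph{uniform exponential} rates. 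This is a purely coarse-geometric input, since $\Gamma$ carries no smooth structure; once such a metric is available, the verification of the stable/unstable sets, the bracket, and transitivity are routine consequences of thin triangles and of the dynamics on $\partial_\infty\Gamma$.
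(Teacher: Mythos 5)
There is a genuine gap, and it sits exactly where you place your opening claim. You assert that the Anosov representation $\rho$ ``enters only through the fact that its existence forces $\Gamma$ to be a non-elementary word hyperbolic group,'' and you then propose to extract the metric Anosov structure purely from $\delta$-hyperbolicity. This is not how the cited source \cite{bridgeman2015pressure} proceeds, and for good reason: the hard part of the theorem is precisely the construction of a metric on (a flow space H\"older conjugate to) $\U\Gamma$ together with local stable and unstable sets on which the flow contracts at a \emph{uniform exponential} rate, with a H\"older local product structure. In \cite{bridgeman2015pressure} this is obtained \emph{from the representation}: one replaces $\U\Gamma$ by the H\"older-conjugate flow space built from the limit maps $\xi^\pm$ into $\sG^\tL/\sP^\pm$ (in the projective Anosov case, into $\bP(\bR^d)$ and its dual), defines the leaves through a point by freezing one of the two boundary coordinates \emph{as transported by the limit maps}, and derives the exponential contraction from the contraction of the flow on the associated line bundles — i.e.\ from the Anosov property of $\rho$ itself. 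The coarse-geometric input (Gromov, Champetier, Mineyev) is only used to get a compact flow space with a H\"older structure and quasi-isometrically embedded flow lines; it is not used to produce the exponential estimates.

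Your proposal defers this to ``hyperbolic joins / Mineyev'' and then declares the verification of the stable/unstable sets ``routine consequences of thin triangles.'' That inverts the logic: the statement that two geodesics with a common endpoint at infinity converge exponentially is a statement about a \emph{specific metric} on the flow space, and producing a $\Gamma$-invariant, flow-compatible metric on $\partial_\infty\Gamma^{(2)}\times\bR$ for which this holds with uniform constants — without using a linear representation — is a delicate matter that \cite{bridgeman2015pressure} deliberately avoids. (Thin triangles give exponential convergence of geodesics in a hyperbolic \emph{space} on which $\Gamma$ acts; transplanting this to the abstract flow space $\U\Gamma$ with the required uniformity and H\"older local product structure is the content one cannot wave away.) You correctly identify this as ``the genuine obstacle,'' but a proof must resolve it rather than flag it, and the resolution in the source is to use $\rho$ in an essential way. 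Your treatment of topological transitivity (density of the fixed-point pairs $(\gamma^-,\gamma^+)$ in $\partial_\infty\Gamma^{(2)}$ plus north--south dynamics) is fine and matches the standard argument; the compactness of $\U\Gamma$ from properness and cocompactness of the Gromov action is also correct. The gap is confined to, but fatal for, the metric Anosov property itself.
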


\subsection{Margulis spacetimes}

\begin{lemma}\label{Rmk: Conjugacy invar Margulis}
    For an infinite order element $\gamma$, and $g \in \sG$, we have $\alpha_{g\rho g^{-1}}(\gamma) = \alpha_{\rho}(\gamma)$. 

    \end{lemma}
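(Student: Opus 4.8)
The plan is to reduce everything to the definition
$\alpha_{\rho}(\gamma) = \langle u_{\rho}(\gamma) \mid \nu_{\rho}(\gamma^+,\gamma^-)\rangle$
and track how each of the three ingredients — the translational part $u_{\rho}(\gamma)$, the quadratic form $\langle\,\cdot\mid\cdot\,\rangle$, and the neutral section $\nu_{\rho}$ — transforms when $\rho$ is replaced by $g\rho g^{-1}$ for $g = (g_0, w) \in \sG = \sG^{\tL}\ltimes\bR^{2n+1}$. First I would write $\rho(\gamma) = (\tL_{\rho}(\gamma), u_{\rho}(\gamma))$ and compute the affine conjugate: $(g_0,w)(\tL_{\rho}(\gamma),u_{\rho}(\gamma))(g_0,w)^{-1} = (g_0\tL_{\rho}(\gamma)g_0^{-1},\; g_0 u_{\rho}(\gamma) + (\Id - g_0\tL_{\rho}(\gamma)g_0^{-1})w)$. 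Thus the linear part of $g\rho g^{-1}$ is $g_0$-conjugate to $\tL_{\rho}(\gamma)$, and the translational part is $g_0 u_{\rho}(\gamma)$ corrected by a coboundary term $(\Id - \tL_{g\rho g^{-1}}(\gamma))w$.

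Next I would show the coboundary term contributes nothing to the pairing. The neutral vector $\nu_{\rho}(\gamma^+,\gamma^-)$ is, by construction, (a normalization of) the unique $\tL_{\rho}(\gamma)$-fixed vector $v_0$ with $\langle v_0\mid v_0\rangle = 1$; for $g\rho g^{-1}$ the corresponding fixed vector is $g_0 v_0$, since $g_0\tL_{\rho}(\gamma)g_0^{-1}(g_0 v_0) = g_0 v_0$ and $g_0$ preserves the form, the orientation data, and the stable/unstable subspaces $V^{\pm}$ accordingly — this is exactly the equivariance $\nu_{g\rho g^{-1}}(\gamma^+,\gamma^-) = g_0\,\nu_{\rho}(\gamma^+,\gamma^-)$, which follows from equivariance of the limit map $\xi_{\tL_{g\rho g^{-1}}} = g_0\cdot\xi_{\tL_{\rho}}$ and $\sG^{\tL}$-equivariance of the neutral map $\nu$. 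Then
\[
\langle (\Id - \tL_{g\rho g^{-1}}(\gamma))w \mid g_0 v_0\rangle
= \langle w \mid g_0 v_0\rangle - \langle \tL_{g\rho g^{-1}}(\gamma)w \mid g_0 v_0\rangle
= \langle w \mid g_0 v_0\rangle - \langle w \mid \tL_{g\rho g^{-1}}(\gamma)^{-1} g_0 v_0\rangle = 0,
\]
using that $\tL_{g\rho g^{-1}}(\gamma)\in\SO^0(n+1,n)$ is an isometry of the form and that $g_0 v_0$ is fixed. Combining, $\alpha_{g\rho g^{-1}}(\gamma) = \langle g_0 u_{\rho}(\gamma) \mid g_0 v_0\rangle = \langle u_{\rho}(\gamma)\mid v_0\rangle = \alpha_{\rho}(\gamma)$, again by isometry-invariance of the form.

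The one point requiring genuine care — the main obstacle — is the normalization clause in the definition of $v_0^g$: it is not merely a fixed vector but the one with $\langle v_0\mid v_0\rangle = 1$ that is \emph{positively oriented} with respect to the chosen orientations on $V_+^g$ and on $(V_+^g)^{\perp}$. I must verify that $g_0\in\SO^0(n+1,n)$, being in the identity component, maps the oriented pair $(V_+^{\tL_{\rho}(\gamma)},\,(V_+^{\tL_{\rho}(\gamma)})^{\perp})$ to $(V_+^{\tL_{g\rho g^{-1}}(\gamma)},\,(V_+^{\tL_{g\rho g^{-1}}(\gamma)})^{\perp})$ \emph{orientation-preservingly}, so that $g_0 v_0$ is indeed the canonically chosen neutral vector for the conjugated representation and no sign is introduced. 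This is where connectedness of $\SG^{\tL}$ is used: $g_0$ acts by an orientation-preserving isometry, hence preserves the coherent choice of orientations underlying the definition of $\nu$. Once this is in place the computation above closes the proof.
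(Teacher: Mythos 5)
Your proposal is correct and follows essentially the same route as the paper: compute the affine conjugate $(h\tL_{\rho}(\gamma)h^{-1},\,hu_{\rho}(\gamma)+v-h\tL_{\rho}(\gamma)h^{-1}v)$, use equivariance of the limit maps together with $\sG^{\tL}$-equivariance of the neutral map to get $\nu_{g\rho g^{-1}}(\gamma^+,\gamma^-)=h\cdot\nu_{\rho}(\gamma^+,\gamma^-)$, and then kill the coboundary term via invariance of the form and the fact that the neutral vector is fixed by the linear part. Your extra care about the orientation normalization of $v_0$ is a point the paper simply absorbs into the stated equivariance of $\nu$, so it is a welcome clarification rather than a divergence.
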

    \begin{proof}
        For $g = (h,v)$, and $\rho = (\tL_{\rho}, u_{\rho})$, we have 
    \[
    (g\rho g^{-1})(\gamma) = g\rho(\gamma) g^{-1} = (h\tL_{\rho}(\gamma)h^{-1}, hu_{\rho}(\gamma) +v -h\tL_{\rho}(\gamma)h^{-1}v).
    \]
    Let $\widetilde{ \rho}: = g\rho g^{-1}$. Now, the limit maps for $\tL_{\widetilde{\rho}}$ are given by $\xi^\pm_{\tL_{\widetilde{\rho}}} = h \cdot\xi^\pm_{\tL_{\rho}}$. Moreover, we have $\xi_{\tL_{\widetilde{\rho}}}(\gamma^+, \gamma^-) = h \cdot \xi_{\tL_{\rho}}(\gamma^+, \gamma^-)$. Since the neutral map $\nu$ is $\sG$-equivariant, 
    \[
    \nu_{\widetilde{\rho}}(\gamma^+, \gamma^-) = \nu (h \cdot\xi_{\tL_{\rho}}(\gamma^+, \gamma^-)) =h \cdot \nu(\xi_{\tL_{\rho}}(\gamma^+, \gamma^-) = h \cdot \nu_{\rho}(\gamma^+, \gamma^-).
    \]
    Thus, we have 
    \begin{align*}
    \alpha_{\widetilde{\rho}} (\gamma) = \langle u_{\widetilde{\rho}}(\gamma)\mid \nu_{\widetilde{\rho}}(\gamma^+, \gamma^-)\rangle &= \langle hu_{\rho}(\gamma) +v -h\tL_{\rho}(\gamma)h^{-1}v \mid h \cdot \nu_{\rho}(\gamma^+, \gamma^-)\rangle\\
    &= \langle u_{\rho}(\gamma) +h^{-1}v -\tL_{\rho}(\gamma)h^{-1}v \mid  \nu_{\rho}(\gamma^+, \gamma^-)\rangle\\
    &= \langle u_{\rho}(\gamma)\mid \nu_{\rho}(\gamma^+, \gamma^-)\rangle\\
    &= \alpha_{\rho}(\gamma).
    \end{align*}
This establishes the result.    
\end{proof}


We now give definition of diffused version of Margulis invariant.

\begin{dfn}[ {\cite[Definition~1.15]{ghosh2023margulis}}]\label{Def:LM}
    Suppose    $\rho: \Gamma \to \sG$ be an injective homomorphism such that $\rho \in \Hom^\Z(\Gamma, \sG^\tL, \sP^\pm)$. Then the \emph{Labourie-Margulis invariant} of $\rho$ is a Livsic cohomology class $[f_{\rho}]$ of Holder continuous function $f_{\rho}$ such that $$\int f_{\rho} d\mu_{\gamma} = \frac{\alpha_{\rho}(\gamma)}{\ell_{\gamma}},$$
    where $\mu_{\gamma}$ is a flow invariant probability measure supported on the periodic orbit of $\U\Gamma$ corresponding to $\gamma$ and $\ell_{\gamma}$ is the period of this orbit.
\end{dfn}

 \begin{thm}[\cite{ghosh2023affine}]\label{Opposite Sign Lemma}
    Suppose $\tL_{\rho} \in \Hom^\Z(\Gamma, \sG^\tL, \sP^{\pm})$. Then $\rho$ is Margulis spacetime  if and only if Labourie-Margulis invariant is either positive or negative.
\end{thm}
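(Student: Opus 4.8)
The final statement to prove is Theorem~\ref{Opposite Sign Lemma}: a representation $\rho$ with Zariski-dense Anosov linear part is a Margulis spacetime if and only if its Labourie--Margulis invariant (the Liv\v{s}ic cohomology class $[f_\rho]$) is either everywhere positive or everywhere negative along periodic orbits.

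\medskip

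The plan is to reduce the statement about proper affine actions to a statement about the sign of the Margulis invariant $\alpha_\rho(\gamma)$ on all conjugacy classes, and then to translate that into a statement about the function $f_\rho$ using the thermodynamic dictionary. First I would recall the Margulis ``opposite sign lemma'': if a group $\rho(\Gamma)$ acting on $\bR^{2n+1}$ contains two elements whose Margulis invariants have opposite signs, then the action is not proper. Conversely, by the work relating properness to the Margulis invariant in the Anosov setting (Goldman--Labourie--Margulis for $n=1$ and its generalization, as recorded in \cite{ghosh2023affine}), if all Margulis invariants $\alpha_\rho(\gamma)$ have the same sign then $\rho(\Gamma)$ acts properly on $\bR^{2n+1}$, i.e. $\rho$ is a Margulis spacetime. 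So the geometric content ``Margulis spacetime $\iff$ $\alpha_\rho$ has constant sign on $\Gamma_{\tH}$'' is available from the cited literature.

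\medskip

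Next I would connect $\alpha_\rho$ to $f_\rho$. By Definition~\ref{Def:LM}, for each infinite-order $\gamma$ with corresponding periodic orbit of period $\ell_\gamma$ we have
\[
\int f_\rho \, d\mu_\gamma = \frac{\alpha_\rho(\gamma)}{\ell_\gamma}.
\]
Since $\ell_\gamma > 0$, the sign of $\int f_\rho\, d\mu_\gamma$ equals the sign of $\alpha_\rho(\gamma)$ for every periodic orbit. Thus ``$\alpha_\rho$ has constant sign on all conjugacy classes'' is equivalent to ``$\int_0^{p_\phi(a)} f_\rho(\phi_t(x))\,dt$ has constant sign over all $a \in O$.'' It remains to upgrade this ``integral-level'' sign condition to the pointwise statement that the function $f_\rho$ (or some cohomologous representative) is everywhere positive, respectively everywhere negative. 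Here I would invoke a Liv\v{s}ic-type / thermodynamic argument: a H\"older cocycle whose period integrals are all strictly positive is Liv\v{s}ic cohomologous to a strictly positive H\"older function. This is where Proposition~\ref{prop: pressureZero} and the reparametrization machinery of Subsection~\ref{hr} enter — one shows $f_\rho$ can be replaced within its Liv\v{s}ic class by a positive representative, so that $\phi^{f_\rho}$ genuinely lies in $\HR(\phi)$, and similarly $-f_\rho$ in the negative case. I would state this as the positivity of the Labourie--Margulis class being well-defined on the level of the cohomology class, as in \cite{ghosh2023margulis, ghosh2023affine}.

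\medskip

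The main obstacle I anticipate is precisely this last step: passing from ``all period integrals of $f_\rho$ are positive'' to ``$f_\rho$ is cohomologous to a positive function.'' The naive Liv\v{s}ic theorem (Theorem~\ref{thm:Livsic}) only characterizes when two cocycles are cohomologous, not when a cocycle is cohomologous to a \emph{positive} one; the genuine input is a positive Liv\v{s}ic theorem for metric Anosov flows, which requires controlling the infimum of ergodic averages uniformly. I would handle this by citing the relevant statement from \cite{ghosh2023affine} (where the Labourie--Margulis invariant and its relation to properness is established) rather than reproving it, and note that the contrapositive direction — if $f_\rho$ takes both signs at the integral level then one produces two elements of opposite Margulis invariant and applies Margulis's opposite sign lemma to defeat properness — is the elementary half. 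Assembling the two implications then yields the biconditional.
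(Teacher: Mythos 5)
You should first note that the paper does not prove Theorem~\ref{Opposite Sign Lemma} at all: it is imported verbatim from Ghosh--Treib \cite{ghosh2023affine} (with \cite{goldman2009proper} as the rank-one antecedent), so the only meaningful comparison is with the argument in those sources. Measured against that, your outline has a genuine gap, and it sits exactly where you located the ``main obstacle.'' Your reduction runs through the claim that properness is equivalent to $\alpha_\rho(\gamma)$ having a constant sign over all infinite-order $\gamma$, and you then hope to upgrade ``all periodic integrals of $f_\rho$ are positive'' to ``$f_\rho$ is Liv\v{s}ic cohomologous to a positive function'' by a positive Liv\v{s}ic theorem. Neither half of this bridge is available. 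What \cite{goldman2009proper} and \cite{ghosh2023affine} actually prove is that properness is equivalent to the non-vanishing of the \emph{diffused} invariant $\mu \mapsto \int f_\rho\, d\mu$ on the whole compact convex set of $\phi$-invariant probability measures, equivalently to \emph{uniform} positivity (or negativity) of $\alpha_\rho(\gamma)/\ell_\gamma$. Pointwise positivity on closed orbits only gives $\int f_\rho\, d\mu \ge 0$ for every invariant $\mu$, since periodic-orbit measures are dense (closing lemma/specification), and this is strictly weaker: non-proper affine deformations with all Margulis invariants positive exist (Charette's punctured-torus examples), which is precisely why Goldman's elementwise conjecture fails and why no ``positive Liv\v{s}ic theorem'' can convert positivity of all periodic integrals into a positive representative. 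The averaging trick that does produce a positive cohomologous representative (the one this paper invokes in Lemma~\ref{Lem:Holder_function} via \cite{goldman2012geodesics}) needs the infimum over all invariant measures to be strictly positive as input.

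The second, related, problem is circularity in the hard direction. The implication ``positivity of $[f_\rho]$ implies properness'' is not a formal consequence of Margulis' opposite-sign lemma plus thermodynamics; in \cite{ghosh2023affine} it is established by a geometric argument (a neutralized section for the flat affine bundle, used to show that affine displacements grow linearly along orbits, whence properness). Your plan at that point is to ``cite the relevant statement from \cite{ghosh2023affine} rather than reproving it,'' but the relevant statement there \emph{is} Theorem~\ref{Opposite Sign Lemma} itself, so the proposal reduces the theorem to itself. The contrapositive direction you call elementary (opposite signs on two elements defeat properness) is indeed fine, but to make the proposal a proof you would need to (i) replace the elementwise sign condition by the measure-theoretic/uniform one throughout, and (ii) either reproduce or precisely isolate the properness criterion of \cite{ghosh2023affine} as the external input, stating it in its correct (diffused) form.
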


\section{Thurston's metric on Margulis spacetimes}\label{Section 3}

\subsection{Thurston's asymmetric metric on Margulis spacetimes}

In this section, we will construct Thurston's asymmetric metric on Margulis spacetimes as constructed in the works of  Abels–Margulis–Soifer \cite{abels2002zariski}, Goldman-Labourie-Margulis \cite{goldman2009proper}, Ghosh-Trieb \cite{ghosh2023margulis}.

Consider the map introduced in the Section~\ref{Def: Pull back},
\[
\tR: \M(\Gamma, \sG) \to \bP\HR(\phi).
\]
 In this subsection we pullback the asymmetric distance of $\bP\HR(\phi)$ (See Section~3 of  Carvajales-Dai-Pozzetti-Wienhard \cite{carvajales2024thurston} for details) to the moduli space $\M(\Gamma, \sG)$. 

\begin{dfn}\label{Def:Asym}
    Define  $d_\Th: \M(\Gamma, \sG) \times \M(\Gamma, \sG) \rightarrow \bR \cup \{\infty\}$ by  $$d_{\Th}(\rho, \varrho) := \log\left(\sup_{[\gamma] \in [\Gamma_\tH]} \frac{h_{\rho}}{h_{\varrho}} \frac{\alpha_{\rho}(\gamma)}{\alpha_{\varrho}(\gamma)}\right).$$  
Here $\Gamma_{\tH} \subset \Gamma$ is the  subset  of infinite-order elements. For $\gamma \in \Gamma$, let $[\gamma]$ denotes its conjugacy class, and write $[\Gamma_{\tH}]$ for the set of conjugacy classes of infinite-order elements in $\Gamma$. 
\end{dfn}

\begin{rem}\label{Rem: Positive}
    The definition of $d_\Th$ is well defined since for $\rho, \varrho \in \Hom_\M(\Gamma, \sG^0 \ltimes \bR^{2n+1})$ the Margulis invariants $\alpha_{\rho} (\gamma)$ and $\alpha_{\varrho}(\gamma)$ are positive (See Ghosh~\cite{ghosh2018pressure}), which ensures that the logarithm is well defined. 
\end{rem}

\begin{lemma}\label{Lemma:Th 1}
The map $\tR$ preserves the Thurston's asymmetric metric, i.e., 
    \[
    d_\Th (\rho, \varrho) = d_\Th (\tR(\rho), \tR(\varrho))
    \]
\end{lemma}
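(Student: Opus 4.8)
The plan is to unwind both sides of the claimed identity through the definitions and reduce everything to the Livšic/thermodynamic dictionary already assembled in the excerpt. On the left, $d_\Th(\rho,\varrho)$ is defined as $\log\bigl(\sup_{[\gamma]\in[\Gamma_\tH]} \tfrac{h_\rho}{h_\varrho}\tfrac{\alpha_\rho(\gamma)}{\alpha_\varrho(\gamma)}\bigr)$. On the right, $\tR(\rho) = [\phi^{h_\rho f_\rho}]$ and $\tR(\varrho) = [\phi^{h_\varrho f_\varrho}]$ are reparametrization classes in $\bP\HR(\phi)$, and by Theorem~\ref{Th:Thasm-Therm} the distance $\sd_\Th$ between them equals $\log\bigl(\sup_{a\in O} \tfrac{h_{\widehat\psi}}{h_\psi}\tfrac{p_{\widehat\psi}(a)}{p_\psi(a)}\bigr)$, where $\psi = \phi^{h_\rho f_\rho}$ and $\widehat\psi = \phi^{h_\varrho f_\varrho}$.

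First I would compute the entropy and period data of these reparametrized flows. By Proposition~\ref{prop: pressureZero}, for a positive Hölder function $F$ one has $h_{\phi^F} = h$ iff $\mathbf P(\phi, -hF) = 0$; applying this with $F = h_\rho f_\rho$ (whose reparametrization entropy, by the normalization built into the definition of $h_\rho$ as $h_\Top(\phi^{f_\rho})$ and the scaling behaviour of entropy under rescaling the reparametrizing function, is $1$) gives $h_\psi = 1$ and likewise $h_{\widehat\psi} = 1$; so the entropy factor $h_{\widehat\psi}/h_\psi$ on the right-hand side is just $1$. Then I would identify the period: for a periodic orbit $a\in O$ corresponding to $\gamma\in[\Gamma_\tH]$, the period of $\phi^{h_\rho f_\rho}$ along $a$ is $\int_0^{p_\phi(a)} h_\rho f_\rho(\phi_t x)\,dt = h_\rho \int_0^{p_\phi(a)} f_\rho(\phi_t x)\,dt = h_\rho\,\ell_\gamma \int f_\rho\,d\mu_\gamma = h_\rho\,\alpha_\rho(\gamma)$, using Definition~\ref{Def:LM} of the Labourie–Margulis invariant. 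Hence $p_{\widehat\psi}(a)/p_\psi(a) = \tfrac{h_\varrho\alpha_\varrho(\gamma)}{h_\rho\alpha_\rho(\gamma)}$.

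Wait — this gives the reciprocal of what is wanted, so I must be careful about the orientation convention in Theorem~\ref{Th:Thasm-Therm}: there $\sd_\Th([\psi],[\widehat\psi])$ uses $p_{\widehat\psi}/p_\psi$ with $\widehat\psi$ in the numerator, while $d_\Th(\rho,\varrho)$ has $\alpha_\rho$ (the first argument) in the numerator. So the correct pairing is $\tR(\rho) \leftrightarrow \widehat\psi$ and $\tR(\varrho)\leftrightarrow\psi$, i.e. $\psi = \phi^{h_\varrho f_\varrho}$ and $\widehat\psi = \phi^{h_\rho f_\rho}$; with this matching, $p_{\widehat\psi}(a)/p_\psi(a) = \tfrac{h_\rho\alpha_\rho(\gamma)}{h_\varrho\alpha_\varrho(\gamma)}$, and taking $\sup$ over $a\in O$ (equivalently over conjugacy classes in $[\Gamma_\tH]$, since periodic orbits of $\phi$ on $\U\Gamma$ are in bijection with such classes) and then $\log$ yields exactly $d_\Th(\rho,\varrho)$. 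I would state this bijection $O \leftrightarrow [\Gamma_\tH]$ explicitly (it is standard for the Gromov flow space of a word hyperbolic group) and note that $\int f_\rho\,d\mu_\gamma = \alpha_\rho(\gamma)/\ell_\gamma$ is independent of the representative in the Livšic class, so the reparametrization class $\tR(\rho)$ is well-defined, which is already recorded in Subsection~\ref{Def: Pull back}.

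The main obstacle, and the only genuinely delicate point, is pinning down that $h_\psi = 1$ (i.e. that the reparametrization $\phi^{h_\rho f_\rho}$ has unit entropy) — this relies on the interaction between the definition $h_\rho = h_\Top(\phi^{f_\rho})$, Proposition~\ref{prop: pressureZero} giving $\mathbf P(\phi, -h_\rho f_\rho) = 0$, and then Proposition~\ref{prop: pressureZero} again read in the other direction to conclude $h_{\phi^{h_\rho f_\rho}} = 1$. Once the entropy normalization is confirmed, the rest is a direct substitution: match the periods via Definition~\ref{Def:LM}, match periodic orbits with conjugacy classes, fix the orientation convention, and invoke Theorem~\ref{Th:Thasm-Therm}. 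I would close by remarking that positivity of $\alpha_\rho,\alpha_\varrho$ (Remark~\ref{Rem: Positive}, via Theorem~\ref{Opposite Sign Lemma}) guarantees all the ratios and logarithms involved are well-defined, so the pullback identity holds with values in $\bR\cup\{\infty\}$.
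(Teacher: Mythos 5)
Your proof is correct and follows essentially the same route as the paper: rewrite $d_\Th(\tR(\rho),\tR(\varrho))$ via Theorem~\ref{Th:Thasm-Therm}, identify the period of the reparametrized flow on the orbit of $\gamma$ with $h_\rho\,\alpha_\rho(\gamma)$ through the Labourie--Margulis invariant, and invoke the bijection between periodic orbits of $\phi$ and conjugacy classes of infinite-order elements. You are in fact more explicit than the paper on two points it leaves implicit: the normalization $h_{\phi^{h_\rho f_\rho}}=1$ (so the entropy factor in Theorem~\ref{Th:Thasm-Therm} drops out), and the orientation convention, where the formula of Theorem~\ref{Th:Thasm-Therm} as printed places the second argument in the numerator while Definition~\ref{Def:Asym} and the paper's own computation place the first there --- a typo-level inconsistency that you correctly identified and resolved.
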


\begin{proof}
Let $\gamma \in O$, and denote by $p_{\phi^{h_{\rho}f_{\rho}}}(\gamma)$ the period of the reparametrized flow $\phi^{h_{\rho}f_{\rho}}$. Then
    \begin{align*}
    d_\Th(\tR(\rho), \tR(\varrho)) &= \log \left(\sup_{[\gamma] \in [\Gamma_\tH]}\frac{h_\Top(\phi^{h_{\rho}f_{\rho}})}{h_\Top(\phi^{h_{\varrho}f_{\varrho}})}\frac{p_{\phi^{h_{\rho}f_{\rho}}} (\gamma)}{p_{\phi^{h_{\varrho}f_{\varrho}}} (\gamma)}\right)\\
    &=\log \left(\sup_{[\gamma] \in [\Gamma_\tH]}\frac{h_\Top(\phi^{h_{\rho}f_{\rho}})}{h_\Top(\phi^{h_{\varrho}f_{\varrho}})} \frac{\int_0^{\ell(\gamma)} h_{\rho}f_{\rho} (\phi_t(x)) dt}{\int_0^{\ell(\gamma)} h_{\varrho}f_{\varrho} (\phi_t(x)) dt}\right)\\
    &=\log \left(\sup_{[\gamma] \in [\Gamma_\tH]}\frac{h_\Top(\phi^{h_{\rho}f_{\rho}})}{h_\Top(\phi^{h_{\varrho}f_{\varrho}})} \frac{h_{\rho}\int_0^{\ell(\gamma)} f_{\rho} (\phi_t(x)) dt}{h_{\varrho}\int_0^{\ell(\gamma)} f_{\varrho} (\phi_t(x)) dt}\right)
    \end{align*}
    Finally, since there is a one-to-one correspondence between periodic orbits $O$ of $\phi$ and conjugacy classes of infinite-order elements, the  proof is complete.
\end{proof}

\begin{lemma}
    Let $\rho, \varrho \in \M(\Gamma, \sG)$, then $$\tR(\rho) = \tR(\varrho)  \Leftrightarrow h_{\rho} \tM_{\rho}= h_{\varrho} \tM_{\varrho}. $$
\end{lemma}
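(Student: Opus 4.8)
The plan is to unwind the definition of $\tR$ and reduce the statement to the Liv\v{s}ic-cohomology criterion already available in the excerpt. Recall that $\tR(\rho) = [\phi^{h_\rho f_\rho}]$, where $[\cdot]$ denotes the class in $\bP\HR(\phi) = \HR(\phi)/\!\sim$, and that by the definition in Subsection~\ref{hr}, $\psi \sim \widehat\psi$ precisely when $h_\psi f$ and $h_{\widehat\psi} g$ are Liv\v{s}ic cohomologous. Applying this with $\psi = \phi^{h_\rho f_\rho}$ and $\widehat\psi = \phi^{h_\varrho f_\varrho}$, and noting that the topological entropy of a reparametrization $\phi^{hf}$ built from a pressure-normalized function is $1$ (so that the ``entropy weight'' in front of each defining function is again just $h_\rho$, resp.\ $h_\varrho$), we get that $\tR(\rho) = \tR(\varrho)$ if and only if $h_\rho f_\rho \sim_\phi h_\varrho f_\varrho$.

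Next I would invoke Liv\v{s}ic's Theorem~\ref{thm:Livsic}: two H\"older continuous functions on $\U\Gamma$ are Liv\v{s}ic cohomologous if and only if their integrals agree over every periodic orbit $a \in O$. Thus $h_\rho f_\rho \sim_\phi h_\varrho f_\varrho$ is equivalent to
$$
h_\rho \int_0^{p_\phi(\gamma)} f_\rho(\phi_t x)\,dt \;=\; h_\varrho \int_0^{p_\phi(\gamma)} f_\varrho(\phi_t x)\,dt
$$
for every periodic orbit $\gamma \in O$, equivalently (dividing by the period $\ell_\gamma$ and recalling the normalization in Definition~\ref{Def:LM}) $h_\rho \cdot \tfrac{\alpha_\rho(\gamma)}{\ell_\gamma} = h_\varrho \cdot \tfrac{\alpha_\varrho(\gamma)}{\ell_\gamma}$, i.e.\ $h_\rho\,\alpha_\rho(\gamma) = h_\varrho\,\alpha_\varrho(\gamma)$ for all $\gamma$. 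Using the one-to-one correspondence between periodic orbits of $\phi$ and conjugacy classes of infinite-order elements of $\Gamma$, this says exactly that the marked (scaled) Margulis-invariant spectra coincide, which is the intended meaning of $h_\rho \tM_\rho = h_\varrho \tM_\varrho$. Both implications then follow at once.

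The only genuine subtlety — and the step I would be most careful about — is bookkeeping around the ``entropy of the reparametrized flow.'' One must observe that $\mathbf P(\phi, -h_\rho f_\rho) = 0$ by Proposition~\ref{prop: pressureZero} (since $h_\rho = h_{\phi^{f_\rho}}$), so $h_\rho f_\rho$ is itself the pressure-zero representative, its associated reparametrized flow $\phi^{h_\rho f_\rho}$ has topological entropy $1$, and hence in the equivalence relation on $\HR(\phi)$ the weighting factors on both sides are honestly $h_\rho$ and $h_\varrho$, not some further rescaled quantities. Once that is pinned down, everything else is a direct application of the Liv\v{s}ic theorem plus the orbit/conjugacy-class dictionary, and in fact the computation essentially repeats the one in the proof of Lemma~\ref{Lemma:Th 1}. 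So I would phrase the argument as: unwind $\tR$ via the $\sim$ relation $\Rightarrow$ apply Liv\v{s}ic $\Rightarrow$ rewrite the orbit integrals in terms of $\alpha_\rho(\gamma)/\ell_\gamma$ via Definition~\ref{Def:LM}, obtaining the marked-spectrum identity.
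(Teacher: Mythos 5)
Your argument is correct and is essentially the paper's proof: the paper simply invokes Lemma~\ref{Lem:Ther-Equ} to pass from the equality $\tR(\rho)=\tR(\varrho)$ in $\bP\HR(\phi)$ to the equality of entropy-weighted periods, which is exactly the Liv\v{s}ic-plus-period computation you carry out by hand, using $p_{\phi^{h_\rho f_\rho}}(\gamma)=h_\rho\alpha_\rho(\gamma)$ and the orbit/conjugacy-class dictionary. Your explicit remark that $h_{\phi^{h_\rho f_\rho}}=1$ (via Proposition~\ref{prop: pressureZero}) makes precise a normalization the paper leaves implicit, but it does not change the route.
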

\begin{proof}
Let $\gamma \in O$, and denote by $p_{\phi^{h_{\rho}f_{\rho}}}(\gamma)$ the period of the reparametrized flow $\phi^{h_{\rho}f_{\rho}}$. Then by Lemma~\ref{Lem:Ther-Equ}, we have
\begin{align*}
\tR(\rho) = \tR(\varrho) &\Leftrightarrow h_\Top(\phi^{h_{\rho}f_{\rho}}) p_{\phi^{h_{\rho}f_{\rho}}}(\gamma)= h_\Top(\phi^{h_{\varrho}f_{\varrho}}) p_{\phi^{h_{\varrho}f_{\varrho}}}(\gamma)\\
& \Leftrightarrow  h_{\rho} \alpha_{\rho}(\gamma) = h_{\varrho}\alpha_{\varrho}(\gamma).
\end{align*}
Hence, the proof is complete.
\end{proof}

\begin{cor}\label{Thm:Thasm}
    The function $\sd_{\Th}(\cdot, \cdot)$ is real-valued, non-negative, and satisfies the triangle inequality. Furthermore, 
    \[d_{\mathrm{Th}}(\rho, \varrho) = 0 \Leftrightarrow h_{\rho}\alpha_{\rho} = h_{\varrho}\alpha_{\varrho}.\]
\end{cor}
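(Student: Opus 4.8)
The plan is to deduce Corollary~\ref{Thm:Thasm} directly from the work already assembled, transferring all the structure of $\sd_{\Th}$ on $\bP\HR(\phi)$ (Theorem~\ref{Th:Thasm-Therm}) through the map $\tR$ using Lemma~\ref{Lemma:Th 1} and the preceding lemma. First I would record that by Lemma~\ref{Lemma:Th 1} we have $d_{\Th}(\rho,\varrho)=\sd_{\Th}(\tR(\rho),\tR(\varrho))$ for all $\rho,\varrho\in\M(\Gamma,\sG)$ (noting $d_\Th$ on $\bP\HR(\phi)$ is exactly what the excerpt calls $\sd_\Th$). Then real-valuedness and non-negativity are inherited: Theorem~\ref{Th:Thasm-Therm} gives that $\sd_\Th$ is an asymmetric distance on $\bP\HR(\phi)$, hence $\sd_\Th\ge 0$ and is finite; and Remark~\ref{Rem: Positive} independently guarantees the supremum inside the logarithm in Definition~\ref{Def:Asym} is a positive real number, so the logarithm makes sense. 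For non-negativity one can also argue directly: taking any single $\gamma$ gives a lower bound, and because the roles of $\rho$ and $\varrho$ can be swapped, the product of the two suprema is $\ge 1$ by Cauchy--Schwarz-type reasoning, forcing at least one of $d_\Th(\rho,\varrho)$, $d_\Th(\varrho,\rho)$ to be $\ge 0$; but the cleanest route is simply to cite Theorem~\ref{Th:Thasm-Therm} via Lemma~\ref{Lemma:Th 1}.

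Next I would establish the triangle inequality. Since $\sd_\Th$ satisfies the triangle inequality on $\bP\HR(\phi)$ by Theorem~\ref{Th:Thasm-Therm}, for $\rho,\varrho,\varpi\in\M(\Gamma,\sG)$ we get
\[
d_\Th(\rho,\varpi)=\sd_\Th(\tR(\rho),\tR(\varpi))\le \sd_\Th(\tR(\rho),\tR(\varrho))+\sd_\Th(\tR(\varrho),\tR(\varpi))=d_\Th(\rho,\varrho)+d_\Th(\varrho,\varpi),
\]
which is the desired inequality. Finally, for the characterization of vanishing, I would chain two equivalences: $d_\Th(\rho,\varrho)=0$ iff $\sd_\Th(\tR(\rho),\tR(\varrho))=0$, which (again by Theorem~\ref{Th:Thasm-Therm}, as $\sd_\Th$ is an asymmetric distance with the separation property $\sd_\Th([\psi],[\widehat\psi])=0\Leftrightarrow[\psi]=[\widehat\psi]$) holds iff $\tR(\rho)=\tR(\varrho)$; and then the preceding lemma gives $\tR(\rho)=\tR(\varrho)\Leftrightarrow h_\rho\alpha_\rho=h_\varrho\alpha_\varrho$ (reading $\tM_\rho=\alpha_\rho$, the marked Margulis invariant function $\gamma\mapsto\alpha_\rho(\gamma)$). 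Combining, $d_\Th(\rho,\varrho)=0\Leftrightarrow h_\rho\alpha_\rho=h_\varrho\alpha_\varrho$.

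The only genuine subtlety — and the step I expect to require the most care — is the separation property, i.e.\ that $\sd_\Th([\psi],[\widehat\psi])=0$ forces $[\psi]=[\widehat\psi]$ rather than merely $\le$ in one direction. This is where the entropy renormalization in the definition is doing real work: $\sup_{a\in O}\frac{h_{\widehat\psi}}{h_\psi}\frac{p_{\widehat\psi}(a)}{p_\psi(a)}=1$ together with the symmetric statement forces $h_{\widehat\psi}p_{\widehat\psi}(a)=h_\psi p_\psi(a)$ for every periodic orbit $a$, and then Lemma~\ref{Lem:Ther-Equ} (equivalence of (1) and (2)) upgrades this to $h_{\widehat\psi}g\sim_\phi h_\psi f$, i.e.\ $[\psi]=[\widehat\psi]$ in $\bP\HR(\phi)$. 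Since Theorem~\ref{Th:Thasm-Therm} already asserts $\sd_\Th$ is an \emph{asymmetric distance}, this separation is built into the cited statement, so in the write-up I would simply invoke it; but I would flag internally that the one-orbit-at-a-time argument plus Lemma~\ref{Lem:Ther-Equ} is what underlies it. Everything else is a formal transfer of properties along $\tR$, using that $\tR$ is well-defined on the quotient (established in Subsection~\ref{Def: Pull back}) and that the bijection between $O$ and $[\Gamma_\tH]$ makes the two suprema literally equal.
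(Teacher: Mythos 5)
Your proposal is correct and follows exactly the paper's route: the paper's proof is a one-line citation of Lemma~\ref{Lemma:Th 1}, the lemma characterizing $\tR(\rho)=\tR(\varrho)$, and Theorem~\ref{Th:Thasm-Therm}, which is precisely the transfer-along-$\tR$ argument you spell out (including the separation property underlying the vanishing criterion).
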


\begin{proof}
This is an immediate consequence of Lemma~\ref{Lemma:Th 1}, the preceding lemma, and Theorem~\ref{Th:Thasm-Therm}.
\end{proof}

\subsection{Renormalized marked Margulis invariant spectrum rigidity}

\begin{thm}\label{Th: fixed }
    Let $\rho, \varrho: \Gamma \to \sG$ be two Margulis spacetimes such that the linear parts are fixed. Then if the equality $h_{\rho}\alpha_{\rho} = h_{\varrho}\alpha_{\varrho}$ holds, there exists an isomorphism $\sigma: \sG \to \sG$ such that $\sigma \circ \rho = \varrho$.
\end{thm}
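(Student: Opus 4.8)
The plan is to exploit two features of the fixed-linear-part setting: with $\tL:=\tL_\rho=\tL_\varrho$ fixed, the translational datum of a Margulis spacetime is a $1$-cocycle for a single linear action and the Margulis invariant depends \emph{linearly} on it; and the ratio between the two entropies can be absorbed by an explicit automorphism of the semidirect product $\sG$. I would organise the proof as a reduction to marked-Margulis-spectrum rigidity with fixed linear part.

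First I would introduce the scaling automorphisms: for $\lambda\in\bR\setminus\{0\}$, the map $\sigma_\lambda:\sG\to\sG$, $(g,u)\mapsto(g,\lambda u)$, is an automorphism (it respects $(g_1,u_1)(g_2,u_2)=(g_1g_2,u_1+g_1u_2)$), it leaves linear parts unchanged, and by Theorem~\ref{Opposite Sign Lemma} it carries Margulis spacetimes to Margulis spacetimes, since it only rescales the Labourie--Margulis invariant while preserving the Zariski-dense Anosov linear part. Because $\nu_\rho$ depends only on the linear part, $\alpha_{\sigma_\lambda\circ\rho}(\gamma)=\lambda\,\alpha_\rho(\gamma)$ for all $\gamma\in\Gamma_{\tH}$, so for $\lambda>0$ one gets $R_T(\sigma_\lambda\circ\rho)=R_{T/\lambda}(\rho)$ and hence $h_{\sigma_\lambda\circ\rho}=h_\rho/\lambda$; in particular $h\,\alpha$ is $\sigma_\lambda$-invariant, which is exactly why it, rather than $\alpha$ alone, is the right renormalized object. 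Writing $\rho=(\tL,u_\rho)$, $\varrho=(\tL,u_\varrho)$ and using positivity of $h$ and $\alpha$ (Remark~\ref{Rem: Positive}), the hypothesis $h_\rho\alpha_\rho=h_\varrho\alpha_\varrho$ becomes $\alpha_\varrho=c\,\alpha_\rho$ with $c:=h_\rho/h_\varrho>0$; then $\rho':=\sigma_c\circ\rho=(\tL,c\,u_\rho)$ is a Margulis spacetime with $\tL_{\rho'}=\tL$, $h_{\rho'}=h_\varrho$, and $\alpha_{\rho'}(\gamma)=c\,\alpha_\rho(\gamma)=\alpha_\varrho(\gamma)$ for every $\gamma$. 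So it suffices to prove that two Margulis spacetimes with the same Anosov linear part and the same marked Margulis spectrum differ by conjugation by a pure translation.

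For that I would set $\delta:=u_\varrho-c\,u_\rho$, a $\tL$-cocycle valued in $\bR^{2n+1}$ (differences and scalar multiples of cocycles for a fixed action are cocycles), and observe, using linearity of $\alpha$ in the translational cocycle, that $\alpha_{(\tL,\delta)}(\gamma)=\alpha_\varrho(\gamma)-\alpha_{\rho'}(\gamma)=0$ for all $\gamma\in\Gamma_{\tH}$. The crux is that such a cocycle must be a coboundary, i.e.\ $\delta(\gamma)=v-\tL(\gamma)v$ for some $v\in\bR^{2n+1}$: by Definition~\ref{Def:LM} one has $\int f_{(\tL,\delta)}\,d\mu_\gamma=0$ for every periodic orbit, hence $f_{(\tL,\delta)}\sim_{\phi}0$ by Liv\v{s}ic's theorem (Theorem~\ref{thm:Livsic}), and the correspondence $[u]\mapsto[f_{(\tL,u)}]$ — which is linear in $u$ since $\alpha$ is — has trivial kernel on $H^1(\Gamma,\bR^{2n+1}_{\tL})$; this is marked-Margulis-spectrum rigidity with fixed linear part, established by Drumm--Goldman for $\SO(2,1)$ and contained in Kim~\cite{kim2005affine} in general (see also \cite{ghosh2023affine}, \cite{ghosh2023margulis}). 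Granting this, conjugating $\rho'=(\tL,c\,u_\rho)$ by $(\Id,v)\in\sG$ produces $(\tL,\,c\,u_\rho+v-\tL(\cdot)v)=(\tL,u_\varrho)=\varrho$, so $\sigma:=\mathrm{Inn}_{(\Id,v)}\circ\sigma_c$ is the required automorphism of $\sG$. If moreover $\rho,\varrho\in\M_k(\Gamma,\sG)$, then $h_\rho=h_\varrho=k$, so $c=1$, $\sigma_c=\Id$, and $\sigma$ is conjugation by $(\Id,v)\in\sG$; thus $\rho$ and $\varrho$ are $\sG$-conjugate, which also recovers the last assertion of Theorem~\ref{Th:Rigidity}.

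The step I expect to be the main obstacle is precisely that injectivity: showing that a $\tL$-cocycle whose Margulis invariant vanishes on every infinite-order conjugacy class is a coboundary — equivalently, in the thermodynamic picture, that $\tR$ is injective on the slice of fixed linear part. This is where Zariski density and the Anosov property of $\tL$ enter essentially, through the structure of the neutral section and the abundance of attracting/repelling pairs $(\gamma^+,\gamma^-)$. I would either invoke this rigidity directly, or, for a self-contained route, use that $\alpha_{(\tL,\delta)}(\gamma)=0$ means each affine transformation $(\tL(\gamma),\delta(\gamma))$ fixes a point of $\bR^{2n+1}$ and then promote these fixed points to a common one by analysing the affine axes together with the Zariski density of the identity eigenlines $\ker(\tL(\gamma)-\Id)$; the first route is the shorter one.
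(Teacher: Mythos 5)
Your proposal is correct and follows essentially the same route as the paper: absorb the entropy ratio with the scaling automorphism $(g,u)\mapsto(g,\lambda u)$, reduce to two Margulis spacetimes with the same Anosov linear part and identical Margulis spectra, show the difference of translational cocycles is a coboundary, and conjugate by the resulting pure translation before composing with the scalings. The only divergence is at the coboundary step: where you cite fixed-linear-part marked-spectrum rigidity (Kim, Ghosh), the paper derives it by differentiating the affine path $\rho_t=(\tL,(1-t)u_{\widetilde{\rho}}+t\,u_{\widetilde{\varrho}})$, invoking the infinitesimal rigidity theorem of \cite{ghosh2023margulis} to get a primitive in $\fk g^\tL\ltimes\bR^{2n+1}$, and then using Zariski density and simplicity of $\sG^\tL$ to kill its $\fk g^\tL$-component — a detail your formulation in terms of injectivity on $H^1(\Gamma,\bR^{2n+1})$ quietly subsumes.
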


\begin{proof}
    Let $\rho = (\tL_{\rho},u_{\rho})$ and $\varrho = (\tL_{\varrho}, u_{\varrho})$ are two Margulis spacetimes such that $\tL_{\rho} = \tL_{\varrho} = \tL$. Let $\widetilde{\rho} = (\tL_{\widetilde{\rho}}, u_{\widetilde{\rho}})$ and $\widetilde{\varrho}=(\tL_{\widetilde{\varrho}}, u_{\widetilde{\varrho}})$ be two Margulis spacetimes defined by 
    \[
    \widetilde{\rho} := (\tL, h_{\rho}u_{\rho}), \quad  \widetilde{\varrho} := (\tL, h_{\varrho} u_{\varrho}).
    \]
   Suppose $h_{\widetilde{\rho}}$ (resp. $h_{\widetilde{\varrho}}$) denotes the topological entropy of $\widetilde{\rho}$ (resp. $\widetilde{\varrho}$). Then by \cite[Lemma 3.2]{ghosh2023margulis} $h_{\widetilde{\rho}} = h_{\widetilde{\varrho}} = 1$. Therefore, $\widetilde{\rho}, \widetilde{\varrho} \in \M_1(\Gamma, \sG)$. Moreover, scaling property of Margulis invariants, we have 
    \[
    \alpha_{\widetilde{\rho}} (\gamma) = h_{\rho}\alpha_{\rho} (\gamma) = h_{\varrho}\alpha_{\varrho} (\gamma) =\alpha_{\widetilde{\varrho}} (\gamma), \quad \text{for all } \gamma \in \Gamma.
    \]
    In  $\M_1(\Gamma, \sG)$, we define a  path $\rho_t = (\tL_{\rho_t}, u_{\rho_t})$
    \[
    \rho_{t}: = (\tL, (1-t)u_{\widetilde{\rho}} + t u_{\widetilde{\varrho}}), \quad \text{for } t \in [0,1].
    \]
    Then $\rho_0 = \widetilde{\rho}$, and $\rho_1 = \widetilde{\varrho}$. We note that since the linear part of $\rho_t$ is fixed, then the unit $1$-eigenvector  of $\tL_{\rho_t}(\gamma)$ is same for all $t$. Then by the linearity of Margulis invariant, we have 
    \[
    \alpha_{\rho_t} (\gamma) = (1-t)\alpha_{\widetilde{\rho}} (\gamma) + t \alpha_{\widetilde{\varrho}}(\gamma).
    \]
    Now since $\alpha_{\widetilde{\rho}} (\gamma) =\alpha_{\widetilde{\varrho}}(\gamma)$, we have $\alpha_{\rho_t} (\gamma) = \alpha_{\rho} (\gamma)$.   Thus, we have 
    \[
    U(\gamma) := \frac{d}{dt}\bigg|_{t=0} \rho_t (\gamma) \widetilde{\rho}(\gamma)^{-1} =  (0, u_{\widetilde{\varrho}}(\gamma) - u_{\widetilde{\rho}}(\gamma)) .
    \]
    Note that $U(\gamma) \in Z^1_{Ad \circ \widetilde{\rho}} (\Gamma, \mathesstixfrak{g}^\tL \ltimes \bR^{2n+1})$. Indeed, by a straightforward calculation shows that 
     \[
     U(\gamma_1 \gamma_2) = \frac{d}{dt}\bigg|_{t=0} \rho_t (\gamma_1 \gamma_2) \widetilde{\rho}(\gamma_1 \gamma_2) = Ad (\widetilde{\rho}(\gamma_1) U(\gamma_2) + U(\gamma_1).
     \]
     Since $\frac{d}{dt}\big|_{t=0}\alpha_{\rho_t}(\gamma) = 0$, by \cite[Theorem 2.1]{ghosh2023margulis} $U(\gamma) \in B^1_{Ad \circ \widetilde{\rho}}(\Gamma, \mathesstixfrak{g}^\tL \times \bR^{2n+1})$. Thus, there exists $G \in \mathesstixfrak{g}^\tL \ltimes \bR^{2n+1}$ such that $U(\gamma) = G - Ad(\widetilde{\rho}(\gamma))G$. Let $G = (H,v) \in \fk g^\tL \ltimes \bR^{2n+1}$, then 
     \[
     Ad(\widetilde{\rho}(\gamma)) G = Ad(\tL(\gamma), u_{\widetilde{\rho}})(X,v) = (\tL(\gamma)X\tL(\gamma)^{-1}, \tL(\gamma) v- (\tL(\gamma)X\tL(\gamma)^{-1})u_{\widetilde{\rho}}(\gamma)).
     \]
     Therefore, 
     \[
     G-Ad(\widetilde{\rho}(\gamma))G = (X-\tL(\gamma)X\tL(\gamma)^{-1}, v- \tL(\gamma)v+(\tL(\gamma)X\tL(\gamma)^{-1})u_{\widetilde{\rho}(\gamma)}).
     \]
     Solving the equation $U(\gamma) = G-Ad(\widetilde{\rho}(\gamma))G$, yields $X= \tL(\gamma)X\tL(\gamma)^{-1}$. Since $\tL(\Gamma)$ is Zariski dense in $\sG^\tL$, and $\sG^\tL$ is simple, this forces $X=0$. Therefore, there exists $v \in \bR^{2n+1}$ such that 
    \[
    v - \rho(\gamma)v  = u_{\widetilde{\varrho}}(\gamma) - u_{\widetilde{\rho}}(\gamma). 
    \]
    Now if we take $g: = (I, v)$, then 
    \begin{align*}
        g\widetilde{\rho}(\gamma)g^{-1} &= (I,v)(\tL(\gamma), u_{\widetilde{\rho}}(\gamma))(I, -v)\\
        &= (\tL(\gamma),u_{\widetilde{\rho}}(\gamma) + v-\tL(\gamma)v)\\
        &= (\tL(\gamma), u_{\widetilde{\rho}}(\gamma) + u_{\widetilde{\varrho}}(\gamma) - u_{\widetilde{\rho}}(\gamma))\\
        & = (\tL(\gamma), u_{\widetilde{\varrho}}(\gamma)\\
        &= \widetilde{\varrho} (\gamma).
    \end{align*}
    Let $\phi_{\lambda} \in \mathrm{Aut}(\sG)$ defined by $\phi_{\lambda}(A,u) = (A, \lambda u)$. Then, we have 
    \[
    \phi_{h_{\rho}}(\rho) = \widetilde{\rho}, \quad \phi_{h_{\varrho}}(\rho) = \widetilde{\varrho}
    \]
    We define $\sigma: = \phi_{h_{\varrho}^{-1}} \circ Ad_g \circ \phi_{h_{\rho}} \in \mathrm{Aut}(\sG)$, and see that 
    \begin{align*}
        \sigma \circ \rho(\gamma) &= \phi_{h_{\varrho}^{-1}} (Ad_g \circ \phi_{h_{\rho}} (\rho(\gamma))\\
        &= \phi_{h_{\varrho}^{-1}} (g \phi_{h_{\rho}} (\rho(\gamma)g^{-1})\\
        &= \phi_{h_{\varrho}^{-1}} (g \widetilde{\rho}(\gamma)g^{-1})\\
        &= \phi_{h_{\varrho}^{-1}}(\widetilde{\varrho}(\gamma))\\
        &=\varrho(\gamma).
    \end{align*}
    Therefore, we get $\sigma \circ \rho = \varrho$.
\end{proof}

\subsection{Proof of Theorem~\ref{Th:Rigidity}} We need the following result from \cite{kim2005affine}
 and \cite{ghosh2020isospectrality} to prove the theorem in its general form.

\begin{thm} \label{Thm:Kim}   Let $\rho, \varrho: \Gamma \to \sG$ be two Margulis spacetimes. Then if the equality $\alpha_{\rho} = \alpha_{\varrho}$ holds, there exists an isomorphism $\sigma: \sG \to \sG$ such that $\sigma \circ \rho = \varrho$. 
\end{thm}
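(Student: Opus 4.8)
The plan is to reduce to the case of equal linear parts and then run the first‑cohomology argument already used in the proof of Theorem~\ref{Th: fixed }.

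\textbf{Linear parts.} Since $\rho$ and $\varrho$ are Margulis spacetimes, $\tL_\rho$ and $\tL_\varrho$ are Zariski‑dense Anosov representations of $\Gamma$ into $\SO^0(n+1,n)$, and I would invoke the marked‑spectrum isospectrality of Kim~\cite{kim2005affine} (see also Ghosh~\cite{ghosh2020isospectrality}) to conclude from $\alpha_\rho=\alpha_\varrho$ that $\tL_\rho$ and $\tL_\varrho$ are conjugate in $\SO^0(n+1,n)$. After conjugating $\varrho$ by the corresponding element $(h,0)\in\sG$ — an inner automorphism which by Lemma~\ref{Rmk: Conjugacy invar Margulis} leaves the equality $\alpha_\rho=\alpha_\varrho$ undisturbed — I may assume $\tL_\rho=\tL_\varrho=:\tL$. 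At this stage $\alpha_\rho=\alpha_\varrho$ also forces $f_\rho\sim_\phi f_\varrho$ by the Liv\v{s}ic Theorem~\ref{thm:Livsic} (the two Labourie--Margulis functions have equal integrals over every periodic orbit), hence $h_\rho=h_\varrho$, so one option is to finish by quoting Theorem~\ref{Th: fixed } verbatim.

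\textbf{Translational parts.} Alternatively I would argue directly. Write $\rho=(\tL,u_\rho)$, $\varrho=(\tL,u_\varrho)$ and consider the affine path $\rho_t:=(\tL,(1-t)u_\rho+tu_\varrho)$. Because the neutral section $\nu_{\rho_t}=\nu\circ\xi_{\tL}$ depends only on the fixed linear part, linearity of the Margulis invariant in the translational variable gives $\alpha_{\rho_t}(\gamma)=(1-t)\alpha_\rho(\gamma)+t\alpha_\varrho(\gamma)=\alpha_\rho(\gamma)$, so $\alpha_{\rho_t}$ is constant along the path. Hence the variation cocycle $U(\gamma)=\frac{d}{dt}\big|_{t=0}\rho_t(\gamma)\rho(\gamma)^{-1}=(0,\,u_\varrho(\gamma)-u_\rho(\gamma))$ lies in $Z^1_{\Ad\circ\rho}(\Gamma,\mathesstixfrak{g}^{\tL}\ltimes\bR^{2n+1})$ with vanishing derivative of the Margulis invariant, and \cite[Theorem~2.1]{ghosh2023margulis} makes it a coboundary $U(\gamma)=G-\Ad(\rho(\gamma))G$ with $G=(X,v)$. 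The $\mathesstixfrak{g}^{\tL}$‑component of this equation is $X=\tL(\gamma)X\tL(\gamma)^{-1}$ for all $\gamma$; Zariski density of $\tL(\Gamma)$ and simplicity of $\sG^{\tL}$ force $X=0$, and the remaining component reads $v-\tL(\gamma)v=u_\varrho(\gamma)-u_\rho(\gamma)$, so $g:=(I,v)\in\sG$ conjugates $\rho$ to $\varrho$. Composing this inner automorphism with the one from the first step produces the desired $\sigma\in\Aut(\sG)$.

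\textbf{Main obstacle.} The genuine difficulty is the linear‑part step: $\alpha_\rho=\alpha_\varrho$ is a priori a constraint only on the translational cocycles, and deducing conjugacy of the linear parts from it is precisely the nontrivial isospectrality input of Kim~\cite{kim2005affine} and Ghosh~\cite{ghosh2020isospectrality}, which I would treat as a black box rather than reprove. Everything downstream is the explicit degree‑one computation with the affine cocycle, identical in form to the argument for Theorem~\ref{Th: fixed }, together with routine bookkeeping of the inner (and, if one goes through the renormalized route, scaling) automorphisms of $\sG$.
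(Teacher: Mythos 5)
The first thing to note is that the paper does not prove this statement at all: Theorem~\ref{Thm:Kim} is quoted verbatim from Kim~\cite{kim2005affine} and Ghosh~\cite{ghosh2020isospectrality} as an external input, and is then used to deduce Theorem~\ref{Th:Rigidity}. Measured against that, your proposal does not supply an independent proof either, and its decomposition hides a circularity. The step you isolate as a ``black box'' --- that $\alpha_\rho=\alpha_\varrho$ forces $\tL_\rho$ and $\tL_\varrho$ to be conjugate in $\SO^0(n+1,n)$ --- is not a separately stated ``marked-spectrum isospectrality'' result in those references: the Margulis invariant spectrum is a priori a constraint on the affine cocycles, and extracting from it any control on the linear parts is precisely the nontrivial content of the theorems of Kim and Ghosh, whose conclusions are the \emph{full} affine conjugacy (i.e.\ the statement of Theorem~\ref{Thm:Kim} itself), not a linear-part statement that one could quote and then complete by hand. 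So either you invoke their theorems in full, in which case your subsequent cocycle computation proves nothing new, or you invoke only the linear-part fragment, in which case there is a genuine gap: no such standalone result is available to cite, and you give no argument for it.

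The downstream part of your proposal is fine but redundant: once the linear parts are equal, the observation that $\alpha_\rho=\alpha_\varrho$ forces $h_\rho=h_\varrho$ (directly from the counting definition of entropy, or via Liv\v{s}ic as you say), followed by the affine path $\rho_t$, the vanishing of $\frac{d}{dt}\alpha_{\rho_t}(\gamma)$, the coboundary conclusion from \cite[Theorem~2.1]{ghosh2023margulis}, Zariski density killing the $\mathesstixfrak{g}^\tL$-component, and conjugation by $(I,v)$, is exactly the paper's proof of Theorem~\ref{Th: fixed }, and your bookkeeping with Lemma~\ref{Rmk: Conjugacy invar Margulis} is correct. In short: the translational-part argument is sound but duplicates Theorem~\ref{Th: fixed }, while the linear-part step --- the only part that matters for the general statement --- is assumed rather than proved, and is not honestly reducible to a citation weaker than Theorem~\ref{Thm:Kim} itself.
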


Now, suppose $\rho=(\tL_{\rho}, u_{\rho})$ and $\varrho = (\tL_{\varrho}, u_{\varrho})$ be two Margulis spacetimes, with corresponding  entropies  $h_{\rho}$ and $h_{\varrho}$. We also consider $\widetilde{\rho} = (\tL_{\widetilde{\rho}}, u_{\widetilde{\rho}})$ and $\widetilde{\varrho}=(\tL_{\widetilde{\varrho}}, u_{\widetilde{\varrho}})$ be two Margulis spacetimes whose  translation parts are given by $u_{\widetilde{\rho}} = h_{\rho}u_{\rho}$, and $u_{\widetilde{\varrho}} = h_{\varrho}u_{\varrho}$. Then, we have 
     \[
    \alpha_{\widetilde{\rho}} (\gamma) = h_{\rho}\alpha_{\rho} (\gamma) = h_{\varrho}\alpha_{\varrho} (\gamma) =\alpha_{\widetilde{\varrho}} (\gamma), \quad \text{for all } \gamma \in \Gamma.
    \]
    By Theorem~\ref{Thm:Kim}, there exist $g \in \sG$ such that 
    $g \widetilde{\rho}(\gamma)g^{-1} = \widetilde{\varrho}(\gamma)$ for all $\gamma\in \Gamma$.     Let $\phi_{\lambda} \in \mathrm{Aut}(\sG)$ defined by $\phi_{\lambda}(h,u) = (h, \lambda u)$. Then, we have  $\phi_{h_{\rho}}(\rho) = \widetilde{\rho}$, and   $\phi_{h_{\varrho}}(\rho) = \widetilde{\varrho}$. We define $\sigma: = \phi_{h_{\varrho}^{-1}} \circ Ad_g \circ \phi_{h_{\rho}} \in \mathrm{Aut}(\sG)$, and see that 
    \begin{align*}
        \sigma \circ \rho &= \phi_{h_{\varrho}^{-1}} (Ad_g \circ \phi_{h_{\rho}} (\rho))\\
        &= \phi_{h_{\varrho}^{-1}} (g \phi_{h_{\rho}} (\rho) g^{-1})\\
        &= \phi_{h_{\varrho}^{-1}} (g \widetilde{\rho}g^{-1})\\
        &= \phi_{h_{\varrho}^{-1}}(\widetilde{\varrho})\\
        &=\varrho.
    \end{align*}
    Therefore, we get $\sigma \circ \rho = \varrho$.
\qed

\section{Finsler norm on Margulis spacetimes}\label{Section 4}

In this section, we will use the the map $\tR$ (See Subsection~\ref{Def: Pull back}) to pull back the Finsler norm on $\bP\HR^v(\phi)$ to $\M(\Gamma, \sG)$.

A family of representation $\{\rho: \Gamma \rightarrow \sG\}$ is parametrized by a real analytic disk $D$ is \emph{real analytic} if for all $\gamma \in \Gamma$, the map $z \mapsto \rho_z (\gamma)$ is real analytic. We fix a real  analytic neighbourhood of $\rho \in  \M(\Gamma, \sG)$ and a real analytic family $\{\rho_z\}_{z \in D} \subset \M(\Gamma, \sG)$, parametrized by some real analytic disk $D$ around $0$ so that $\rho_0 =\rho$ and $\cup_{z \in D} \rho_z$ coincides with this neighbourhood. By abuse of notation we will sometimes identify the neighbourhood with $D$ itself.

\begin{dfn}\label{Def:Finsler norm}
    Given a tangent vector $v \in \rT_{\rho} \M(\Gamma, \sG)$ we set 
    \[
    \lVert v\rVert_\Th := \sup_{[\gamma] \in [\Gamma_\tH]} \frac{
       \dot{h}_{\rho}(v)\,\alpha_\rho(\gamma)
       + h_\rho\, \dot{\alpha}_{\rho}(\gamma))(v)
     }{h_\rho\,\alpha_\rho(\gamma)},
    \]
    where $\dot{h}_{\rho}$ (resp. $\dot{\alpha}_{\rho}(\gamma)$ is the  derivative of $\varrho \mapsto h_{\varrho}$ (resp. $\varrho \mapsto \alpha_{\varrho} (\gamma)$) at $\varrho$. In particular, if $\varrho \mapsto h_{\varrho}$ is constant, that is, if $\varrho \in \M_k(\Gamma, \sG)$, then one has
    \[
    \lVert v \rVert_\Th = \sup_{[\gamma] \in [\Gamma_\tH]} \frac{\dot{\alpha}_{\rho} (\gamma) (v)}{\alpha_{\rho}(\gamma)}.
    \]
\end{dfn}

\begin{rem}
    $(1)$ By~\cite[Section~3]{ghosh2023margulis}, entropy varies in an analytic way over $\M(\Gamma, \sG)$. In particular, the map $\rho \mapsto h_{\rho}$ is differentiable.
    
    $(2)$ By~\cite[Section~2]{ghosh2023margulis}, the Margulis invariant varies analytically over moduli spaces. In particuar, the map $\rho \mapsto \alpha_{\rho}(\gamma)$ is differentiable.
\end{rem}

The next proposition generalizes \cite[Proposition 4.2.1]{ghosh2018pressure} to the setting of higher-dimensional Margulis spacetimes. The proof follows the spirit of Ghosh’s original argument.

\begin{prop}\label{Prop:Main Finsler}
    Let $\{\rho_z\}_{z \in D} \subset \Hom_\M (\Gamma, \sG)$  be a real analytic family of Margulis spacetimes parametrized by an open disk $D$ around $0$. Then  after possibly restricting $D$ to a smaller disk around $0$, there exists  $\upsilon >0$, and a  real analytic family of positive H\"older continuous functions $\{\widetilde{g}_z: \U\Gamma \rightarrow \bR\}_{z \in D} \subset \mathcal{H}^{\upsilon}(\U\Gamma)$ such that for each $z \in D$, 
    \[
    \int_{0}^{\ell(\gamma)} \widetilde{g}_z(\phi_t(x)) dt = \alpha_{\rho_z}(\gamma).
    \]
    Moreover, the map $D \to \bP\HR(\phi)$, defined by $z \mapsto  [\phi^{\widetilde{g}_z}]$ is real analytic.
\end{prop}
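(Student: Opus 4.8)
\textbf{Proof proposal for Proposition~\ref{Prop:Main Finsler}.}

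The plan is to realize each Margulis invariant $\alpha_{\rho_z}(\gamma)$ as a period of a reparametrized flow and to produce the reparametrizing functions $\widetilde{g}_z$ by averaging the Labourie--Margulis cocycle against a real analytic family of equivariant data. First I would recall from Definition~\ref{Def:LM} and Theorem~\ref{Opposite Sign Lemma} that for each fixed $z$ the Labourie--Margulis invariant is a Liv\v{s}ic cohomology class $[f_{\rho_z}]$ of a H\"older continuous function with $\int_\gamma f_{\rho_z}\,d\mu_\gamma = \alpha_{\rho_z}(\gamma)/\ell_\gamma$, and that since $\rho_z$ is a Margulis spacetime this class admits a strictly positive representative. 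The first step is therefore to fix, for $z=0$, a positive H\"older representative $\widetilde g_0$ of $[f_{\rho_0}]$; one then has $\int_0^{\ell(\gamma)}\widetilde g_0(\phi_t(x))\,dt = \alpha_{\rho_0}(\gamma)$ for all periodic orbits.

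The second and main step is to upgrade this to an \emph{analytic family}. Here I would invoke the analytic dependence of the Anosov limit maps $\xi^\pm_{\tL_{\rho_z}}$ on $z$ (Theorem~6.1 of \cite{bridgeman2015pressure}, quoted in the excerpt) together with the analytic dependence of the translational cocycle $u_{\rho_z}$, to write down an explicit H\"older continuous function $F_z$ on $\widetilde{\U\Gamma}$ built from $u_{\rho_z}$ and the neutral section $\nu_{\rho_z} = \nu\circ\xi_{\tL_{\rho_z}}$ whose periods along orbits compute $\alpha_{\rho_z}(\gamma)$ — this is precisely the construction of the Labourie--Margulis invariant in \cite{ghosh2023margulis}, and the point is that every ingredient in it varies real analytically in $z$ with a uniform H\"older exponent $\upsilon>0$ on a possibly smaller disk. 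Descending $F_z$ to $\U\Gamma$ gives a H\"older function $g_z$ with $\int_0^{\ell(\gamma)}g_z\,d t = \alpha_{\rho_z}(\gamma)$; since $\alpha_{\rho_z}(\gamma)>0$ for all $\gamma$, the cohomology class $[g_z]$ has a positive representative, and after shrinking $D$ (using that positivity is an open condition and that $g_0$ can be taken positive) one can arrange $\widetilde g_z := g_z$ itself to be positive for all $z\in D$. Analyticity of $z\mapsto\widetilde g_z$ as a map into $\mathcal H^\upsilon(\U\Gamma)$ then follows from analyticity of the defining data, exactly as in the proof of \cite[Proposition 4.2.1]{ghosh2018pressure}.

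The final step is to deduce that $z\mapsto[\phi^{\widetilde g_z}]\in\bP\HR(\phi)$ is real analytic. By construction $\widetilde g_z\in\mathcal H^\upsilon(\U\Gamma)$ depends analytically on $z$, so it defines an analytic family in $\HR^\upsilon(\phi)$; composing with the projection $\HR^\upsilon(\phi)\to\bP\HR^\upsilon(\phi)$, which by the discussion in Subsection~\ref{hr} is the quotient by the closed subspace $\mathcal B^\upsilon(\U\Gamma)$ and hence a real analytic (indeed linear) submersion onto the analytic manifold $\mathcal P^\upsilon(\U\Gamma)$, preserves analyticity. One should also note that the normalization $[\phi^{\widetilde g_z}]$ in $\bP\HR(\phi)$ implicitly rescales by the entropy $h_{\phi^{\widetilde g_z}}$, which by Theorem~\ref{Analyticity of pressure} (analyticity of pressure, via Proposition~\ref{prop: pressureZero}) also varies analytically; so the composite map is analytic. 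The main obstacle I anticipate is the second step: proving that the Labourie--Margulis function can be chosen H\"older \emph{with a single exponent $\upsilon$ valid uniformly over a neighborhood} and depending analytically on $z$ as an element of the Banach space $\mathcal H^\upsilon$, rather than merely pointwise or orbit-by-orbit; this requires carefully tracking the regularity of the limit maps under deformation and is where the restriction to a smaller disk $D$ is really used.
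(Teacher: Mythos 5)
Your overall strategy --- realize $\alpha_{\rho_z}(\gamma)$ as periods of a real analytic family of H\"older functions and then descend to $\bP\HR(\phi)$ --- is the same as the paper's, but the step you yourself flag as ``the main obstacle'' is exactly the step you leave unproved, and it is the heart of the proposition. The Margulis invariant is built from the group cocycle $u_{\rho_z}:\Gamma\to\bR^{2n+1}$, which is not a function on the flow space; to diffuse it into a H\"older function on $\U\Gamma$ one must choose a $\rho_z$-equivariant map $\widetilde{\U\Gamma}\to\bR^{2n+1}$ (a section of the flat affine bundle) and measure its displacement along the flow against the neutral direction. Analytic dependence of the limit maps $\xi^\pm_{\tL_{\rho_z}}$ gives analyticity of the neutral section, but it does not by itself produce such an equivariant section varying real analytically in $z$ with a single H\"older exponent $\upsilon$ valid on a neighborhood. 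The paper supplies precisely this mechanism in Lemma~\ref{Lem:contracting_section}: it forms the bundle $E=(D\times\widetilde{\U\Gamma}\times\bR^{2n+1})/\Gamma$, checks that the time-$t_0$ map fixes and fiberwise contracts along a neutralized section $\sigma_0$ for $\rho_0$, and invokes the stable-section theorem of \cite{bridgeman2015pressure} (Theorem~6.5 there) to extend $\sigma_0$ to an $\upsilon$-H\"older, transversely real analytic section over a smaller disk; the cocycle and the family $\mathfrak{f}_z$ are then extracted from this section in Lemma~\ref{Lem:Holder_function}. Without some substitute for this contraction argument, your ``explicit formula built from $u_{\rho_z}$ and $\nu_{\rho_z}$'' is an assertion, not a construction.

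The positivity step is also not correct as written. Having $\int_0^{\ell(\gamma)}g_z\,dt=\alpha_{\rho_z}(\gamma)>0$ on every periodic orbit does not make $g_z$ pointwise positive, and the cocycle-derived function need not be positive even at $z=0$; the existence of a positive representative is the content of Theorem~\ref{Opposite Sign Lemma}, and choosing positive representatives separately for each $z$ destroys analyticity of the family. Your parenthetical ``$g_0$ can be taken positive, positivity is open'' can be repaired --- for instance, subtract from the whole family the $z$-independent coboundary that turns $g_0$ into a positive representative; this preserves periods, H\"older regularity and analyticity, and positivity then persists for $z$ in a smaller disk --- but you must say this, since as stated you conflate positivity of periods with positivity of the function. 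The paper instead obtains positivity for the whole family at once by the time-averaging Lemmas A.1--A.2 of \cite{goldman2012geodesics}, setting $\widetilde g_z(x,y,t_0)=\frac1T\int_0^T\mathfrak{f}_z(x,y,t_0+s)\,ds>0$ after shrinking the disk. Your final step (analyticity of $z\mapsto[\phi^{\widetilde g_z}]$ via the quotient by the closed subspace of coboundaries, with entropy varying analytically) is fine and agrees with the paper.
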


We shall divide the proof of the Proposition~\ref{Prop:Main Finsler} into the following two lemmas.

\begin{lemma}\label{Lem:contracting_section}
Under the same hypothesis as in Proposition~\ref{Prop:Main Finsler}, after possibly shrinking $D$ to a smaller disk around $0$, there exist $t_0>0$, $\upsilon>0$, and an 
$\upsilon$–H\"{o}lder, transversely real analytic section 
\[
\sigma : D \times \U\Gamma \to E
\]
of the affine bundle 
\[
\pi : E=\bigl(D\times\widetilde{\U\Gamma}\times\mathbb{R}^{2n+1}\bigr)/\Gamma 
     \longrightarrow D\times \U\Gamma
\]
such that:
\begin{enumerate}
    \item $\Psi_{t_0}(\sigma(z,p))=\sigma(z,\psi_{t_0}(p))$,
    \item $\sigma(0,p)=\sigma_0(p)$, where $\sigma_0$ is a neutralized section for $\rho_0$,
    \item $\Psi_{t_0}$ is uniformly contracting along the fibers of $E$ restricted to $\sigma$.
\end{enumerate}
\end{lemma}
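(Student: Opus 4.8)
The plan is to construct the section $\sigma$ by the standard graph-transform / contraction argument in the affine bundle $E$, carried out with an explicit parameter dependence on $z\in D$ so that transverse real analyticity is retained. First I would recall the set-up for the fixed representation $\rho_0$: the linear part $\tL_{\rho_0}$ is Anosov and Zariski dense, so the associated flat $\bR^{2n+1}$-bundle over $\U\Gamma$ (obtained as $\bigl(\widetilde{\U\Gamma}\times\bR^{2n+1}\bigr)/\Gamma$ via $\Ad\circ\tL_{\rho_0}$) splits equivariantly along the limit maps into stable and unstable subbundles $V^+\oplus V^-$, on which the lifted flow is respectively contracting and expanding; the cocycle describing the affine (translational) part of $\rho_0$ then determines, by summing a geometric series along the flow, a distinguished \emph{neutralized} section $\sigma_0$ satisfying $\Psi_{t_0}(\sigma_0(p))=\sigma_0(\psi_{t_0}(p))$ and contracting transversally. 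This is exactly the content underlying Definition~\ref{Def:LM} and the construction of the Labourie--Margulis invariant in \cite{ghosh2023margulis}, \cite{ghosh2023affine}, and I would cite those for the $z=0$ case, which gives conclusions (2) and (3) at the base point.

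Next I would put in the parameter. Over $D\times\U\Gamma$ the affine bundle $\pi:E\to D\times\U\Gamma$ carries a flow $\Psi_t$ covering $\mathrm{id}_D\times\psi_t$ whose restriction to the slice $\{z\}\times\U\Gamma$ is the affine flow attached to $\rho_z$. By openness of the Anosov condition (Theorem of Guichard--Wienhard quoted above) and real analyticity of the limit maps $z\mapsto\xi^\pm_{\tL_{\rho_z}}$ in $\mathcal{C}^\upsilon(\partial_\infty\Gamma,\sG^\tL/\sP^\pm)$ (Theorem~6.1 of \cite{bridgeman2015pressure}), after shrinking $D$ the stable/unstable splitting persists, varies $\upsilon$-H\"older in $p$ and real-analytically in $z$, and $\Psi_{t_0}$ is uniformly fiberwise contracting for a single $t_0>0$ and a uniform rate over all of $D$. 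Then I would realize $\sigma$ as the unique fixed point of the graph transform $T:\mathcal{S}\to\mathcal{S}$ on the Banach space (or complete metric space) $\mathcal{S}$ of bounded $\upsilon$-H\"older sections of $E\to D\times\U\Gamma$, where $T$ sends a section $s$ to $q\mapsto \Psi_{t_0}\bigl(s(\Psi_{t_0}^{-1}\text{-preimage})\bigr)$ pulled back appropriately along $\psi_{t_0}$; uniform fiberwise contraction of $\Psi_{t_0}$ makes $T$ a contraction, so the Banach fixed point theorem produces a unique $\sigma\in\mathcal{S}$ with $\Psi_{t_0}(\sigma(z,p))=\sigma(z,\psi_{t_0}(p))$, giving (1). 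Uniqueness forces $\sigma(0,\cdot)=\sigma_0$ by comparison with the $z=0$ construction, giving (2), and the fiberwise contraction of $\Psi_{t_0}$ restricted to the graph of $\sigma$ is inherited from the ambient contraction, giving (3).

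The remaining point is transverse real analyticity in $z$, and this is where I expect the main obstacle to lie: one wants $\sigma$ to depend real-analytically on $z$ while only H\"older in $p$, i.e.\ "transversely real analytic" in the sense of \cite{bridgeman2015pressure}. I would handle this by working in the Banach space of sections that are $\upsilon$-H\"older in $p$ \emph{and} extend holomorphically in a complexified neighbourhood of $D$ (with uniform bounds), observing that $T$ preserves this space because the data defining $T$ — the flow $\psi_{t_0}$ on $\U\Gamma$, the limit maps, and the affine cocycle of $\rho_z$ — are all real analytic in $z$ by hypothesis and by the cited analyticity theorems; since $T$ is still a contraction on this smaller complete space, its fixed point $\sigma$ automatically lands there, hence is transversely real analytic. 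Finally, integrating the neutral component of $\sigma(z,\cdot)$ against the periodic orbit measures recovers $\alpha_{\rho_z}(\gamma)$ (by the very definition of the neutralized section and the Margulis invariant), which is the property that Proposition~\ref{Prop:Main Finsler} will extract from $\sigma$ in the next lemma. The one technical subtlety to be careful about is that $E$ is only an affine (not vector) bundle, so "bounded section" and the contraction estimate must be phrased relative to a fixed background section (e.g.\ any continuous section coming from a partition of unity); I would fix such a reference section at the outset and measure everything by the induced fiberwise distance.
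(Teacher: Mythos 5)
Your overall architecture matches the paper's proof: you build the same parametrized affine bundle $E=\bigl(D\times\widetilde{\U\Gamma}\times\bR^{2n+1}\bigr)/\Gamma$ with the flow $\Psi_{t_0}$ covering $\mathrm{id}\times\psi_{t_0}$, you take the $z=0$ input (the neutralized section and its contraction property) from \cite{ghosh2023affine}, and you then extend over a smaller disk; the paper's only remaining step is to verify the hypotheses of \cite[Theorem~6.5]{bridgeman2015pressure} (invariance of $\sigma_0$ under $\Psi_{t_0}$ and the fiberwise contraction along $\sigma_0$ in the sense of \cite[Definition~4.4]{ghosh2023affine}) and invoke that theorem, whereas you propose to re-prove this extension step by a graph transform on transversely holomorphic H\"older sections. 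Re-deriving the cited theorem instead of quoting it is legitimate in principle.

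The genuine gap is in the contraction you feed into that graph transform. You assert that the flat $\bR^{2n+1}$-bundle splits as $V^+\oplus V^-$ and that, after shrinking $D$, $\Psi_{t_0}$ is uniformly contracting on the fibers of $E$, so that the transform is a contraction on \emph{all} bounded H\"older sections and its fixed point is unique. In fact the splitting is $V^+\oplus V^0\oplus V^-$ with a one-dimensional neutral line $V^0$ (this is precisely where the Margulis invariant lives), and the fiberwise holonomy expands one isotropic summand and is neutral along $V^0$: around a periodic orbit it is conjugate to $\tL_{\rho_z}(\gamma)$, which has $1$ as an eigenvalue, so its fiberwise Lipschitz constant cannot be uniformly $<1$ in any bounded family of norms. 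Concretely, if $\sigma$ is any $\Psi_{t_0}$-invariant H\"older section, then so is $\sigma+c\,\nu_{\rho_z}$ for every constant $c$, where $\nu_{\rho_z}$ is the flow-parallel neutral section; this kills both the global contraction and the uniqueness you invoke to deduce item (2), and item (3) cannot simply be ``inherited from the ambient contraction'' because there is no ambient contraction on the full fiber. What the argument actually needs is the affine-Anosov contraction \emph{along the neutralized section} established in \cite[Definition~4.4, Proposition~5.2]{ghosh2023affine} --- equivalently, one runs the geometric-series construction separately on the stable and unstable components, forward and backward in time, leaving the neutral component free (this is exactly what produces the Margulis cocycle) --- and then the parametrized, transversely real analytic propagation over a smaller disk is precisely the content of \cite[Theorem~6.5]{bridgeman2015pressure}, which is how the paper concludes.
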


\begin{proof}
       We define the trivial affine bundle over $D \times \widetilde{\U\Gamma}$ by  
\[
\widetilde{E}= D \times \widetilde{\mathsf{U}\Gamma} \times \mathbb{R}^{2n+1}.
\]
The group $\Gamma$ acts on $\widetilde{E}$ via  
\[
\gamma \cdot (u, \tilde{p}, v)= (u, \gamma \cdot \tilde{p}, \rho_z(\gamma)v),
\]
where $\gamma \cdot \tilde{p}$ denotes the diagonal action on $\partial_\infty \Gamma^{(2)} \times \mathbb{R}$.  
The quotient bundle is
\[
\pi : E =  \widetilde{E}/\Gamma \longrightarrow D \times \mathsf{U}\Gamma .
\]
The bundle $E$ is Lipschitz and transversely real analytic (see Definition~6.4 of \cite{bridgeman2015pressure}).  

Fix $t_0>0$. Define the flow
\[
\Psi_{t_0}(z,\widetilde{p},v) = (z,\widetilde{\psi}_{t_0}(\widetilde{p}),v).
\]
Since $\Psi_{t_0}$ is $\Gamma$–equivariant, it descends to a map, still denoted $\Psi_{t_0}$,
\[
\Psi_{t_0}\bigl([z,\widetilde{p},v]\bigr)= [z,\widetilde{\psi}_{t_0}(\widetilde{p}),v],
\]
where $[\cdot]$ denotes the $\Gamma$–orbit.  
The Gromov flow is $\widetilde{\psi}_{t_0}(x,y,t)=(x,y,t+t_0)$ on $\widetilde{\U\Gamma}$.

Let $\pi_{\Gamma} : \widetilde{\U\Gamma} \to \U\Gamma$ be the projection $\pi_{\Gamma}(\widetilde{p})=\Gamma\cdot \widetilde{p}$.  
Since $\widetilde{\psi}_{t_0}$ commutes with the $\Gamma$–action,
\[
\widetilde{\psi}_{t_0}(\gamma \cdot \widetilde{p}) = \gamma \cdot \widetilde{\psi}_{t_0}(\widetilde{p}),
\]
the flow descends to a flow $\psi_{t_0}$ on $\U\Gamma$ satisfying  
\[
\pi_{\Gamma}\circ \widetilde{\psi}_{t_0} = \psi_{t_0}\circ \pi_{\Gamma}.
\]

For $[z,\widetilde{p},v]\in E$, the base point is  
\[
\pi([z,\widetilde{p},v]) = (z,\pi_{\Gamma}(\widetilde{p}))=(z,p),
\]
and
\[
\pi\bigl(\Psi_{t_0}([z,\widetilde{p},v])\bigr)
   = (z,\psi_{t_0}(p)).
\]
Thus $\Psi_{t_0}$ sends the fiber over $(z,p)$ to the fiber over $(z,\psi_{t_0}(p))$, and it covers $\mathrm{id}\times \psi_{t_0}$. Hence $\Psi_{t_0}$ is a bundle automorphism of $E$.

By \cite[Proposition~5.2]{ghosh2023affine}, for $\rho_0$ there exists a neutralized section
\[
f_0:\U\Gamma\to \mathrm{R}_{\rho_0}.
\]
Choose a lift $\widetilde{f}_0$ and define  
\[
\widetilde{\sigma}_0(\gamma\cdot\widetilde{p})=\rho_0(\gamma)\widetilde{\sigma}_0(\widetilde{p}).
\]
Then a section $\sigma_0 : \U\Gamma\to E$ is given by  
\[
\sigma_0(p) = [0,\widetilde{p},\widetilde{\sigma}_0(\widetilde{p})], \qquad 
\widetilde{p}\in \pi_{\Gamma}^{-1}(p).
\]

We check that $\Psi_{t_0}$ fixes $\sigma_0$. Indeed,
\[
\Psi_{t_0}(\sigma_0(p))
   = [0,\widetilde{\psi}_{t_0}(\widetilde{p}),\widetilde{\sigma}_0(\widetilde{p})],
\]
while
\[
\sigma_0(\psi_{t_0}(p))
   = [0,\widetilde{\psi}_{t_0}(\widetilde{p}),
        \widetilde{\sigma}_0(\widetilde{\psi}_{t_0}(\widetilde{p}))].
\]
Since $f_0$ is parallel along flow lines,
\[
\widetilde{\sigma}_0(\widetilde{\psi}_{t_0}(\widetilde{p}))
   = \widetilde{\sigma}_0(\widetilde{p}),
\]
and hence
\[
\Psi_{t_0}(\sigma_0(p))=\sigma_0(\psi_{t_0}(p)).
\]

By \cite[Definition~4.4]{ghosh2023affine}, there exist $C,c>0$ such that the fiberwise derivative
\[
T_{\sigma_0(p)}\pi^{-1}(p)\to T_{\sigma_0(\psi_{t_0}(p))}\pi^{-1}(\psi_{t_0}(p))
\]
satisfies
\[
\left\lVert D^f\Psi_{t_0}\Big|_{T_{\sigma_0(p)}\pi^{-1}(p)} \right\rVert
   \le Ce^{-ct_0} < 1 
   \qquad\text{(for $t_0$ large)}.
\]
Thus $\Psi_{t_0}$ contracts along $\sigma_0$.

Hence, by \cite[Theorem~6.5]{bridgeman2015pressure}, there exist an open disk $D_0\subset D$ about $0$, a constant $\upsilon>0$, and an $\upsilon$–Hölder transversely real analytic section  
\[
\sigma : D_0 \times \U\Gamma \to E
\]
such that  
\begin{enumerate}
\item $\Psi_{t_0}(\sigma(z,p)) = \sigma(z,\psi_{t_0}(p))$,
\item $\sigma|_{\{0\}\times \U\Gamma} = \sigma_0$,
\item $\Psi_{t_0}$ contracts along $\sigma$.
\end{enumerate}
This completes the proof of the lemma.
\end{proof}

\begin{lemma}\label{Lem:Holder_function}
Let $\sigma$ be the section obtained in Lemma~\ref{Lem:contracting_section}.
Then for each $z\in D$ there exists a positive $\upsilon$–Hölder function 
$\widetilde{g}_z:\U\Gamma\to\mathbb R$,
real analytically depending on $z$, such that for every infinite order
$\gamma\in\Gamma$,
\[
\int_0^{\ell(\gamma)} \widetilde{g}_z(\phi_t(x))\,dt
   = \alpha_{\rho_z}(\gamma).
\]
Moreover, the map $D \to \bP\HR(\phi)$, defined by $z\mapsto [\phi^{\widetilde{g}_z}]$ is real analytic.
\end{lemma}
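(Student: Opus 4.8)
\textbf{Proof plan for Lemma~\ref{Lem:Holder_function}.}
The plan is to extract the desired $\upsilon$-H\"older function $\widetilde{g}_z$ from the contracting section $\sigma$ produced in Lemma~\ref{Lem:contracting_section}, by differentiating the linear-part dynamics along the flow. Concretely, working on the universal cover, write $\sigma(z,\widetilde p) = [z,\widetilde p, \widetilde\sigma_z(\widetilde p)]$ with $\widetilde\sigma_z : \widetilde{\U\Gamma}\to \mathbb R^{2n+1}$ a $\rho_z$-equivariant lift, i.e. $\widetilde\sigma_z(\gamma\cdot\widetilde p) = \rho_z(\gamma)\widetilde\sigma_z(\widetilde p)$. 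The invariance relation $\Psi_{t_0}(\sigma(z,p)) = \sigma(z,\psi_{t_0}(p))$ shows that the translational part $u_{\rho_z}$ of $\rho_z$ is recorded by the failure of $\widetilde\sigma_z$ to be flow-invariant, and the neutral component of this defect is exactly a coboundary-free multiple of the Margulis cocycle. Following Ghosh's argument for \cite[Proposition~4.2.1]{ghosh2018pressure}, I would define $\widetilde g_z(p)$ to be the flow derivative of the paired quantity $\langle\, \widetilde\sigma_z(\cdot)\mid \nu\circ\xi_{\tL_{\rho_z}}(\cdot)\,\rangle$ (the inner product of the section with the neutral section), so that $\widetilde g_z = \frac{d}{dt}\big|_{t=0}\langle \widetilde\sigma_z\mid \nu_{\rho_z}\rangle\circ\phi_t$; this is manifestly $\upsilon$-H\"older in $p$ and real analytic in $z$ because $\sigma$ is $\upsilon$-H\"older transversely real analytic and $z\mapsto\xi_{\tL_{\rho_z}}$ is real analytic by \cite[Theorem~6.1]{bridgeman2015pressure}.

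The next step is the period computation. Integrating $\widetilde g_z$ over the periodic orbit corresponding to an infinite-order $\gamma$, the integral telescopes:
\[
\int_0^{\ell(\gamma)}\widetilde g_z(\phi_t(x))\,dt
 = \bigl\langle \widetilde\sigma_z(\widetilde\psi_{\ell(\gamma)}\widetilde p)\mid \nu_{\rho_z}(\gamma^+,\gamma^-)\bigr\rangle
 - \bigl\langle \widetilde\sigma_z(\widetilde p)\mid \nu_{\rho_z}(\gamma^+,\gamma^-)\bigr\rangle.
\]
Since $\widetilde\psi_{\ell(\gamma)}\widetilde p = \gamma\cdot\widetilde p$ on the axis of $\gamma$ and $\widetilde\sigma_z$ is $\rho_z$-equivariant with $\rho_z(\gamma) = (\tL_{\rho_z}(\gamma), u_{\rho_z}(\gamma))$, the first term becomes $\langle \tL_{\rho_z}(\gamma)\widetilde\sigma_z(\widetilde p) + u_{\rho_z}(\gamma)\mid \nu_{\rho_z}(\gamma^+,\gamma^-)\rangle$. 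Using that $\nu_{\rho_z}(\gamma^+,\gamma^-)$ is, up to normalization, the unit $1$-eigenvector $v_0$ of $\tL_{\rho_z}(\gamma)$ and that $\tL_{\rho_z}(\gamma)$ is an isometry of $\langle\cdot\mid\cdot\rangle$ fixing $v_0$, the $\tL_{\rho_z}(\gamma)\widetilde\sigma_z$-term pairs with $v_0$ the same way $\widetilde\sigma_z$ does, so the two boundary contributions involving $\widetilde\sigma_z$ cancel and what survives is $\langle u_{\rho_z}(\gamma)\mid \nu_{\rho_z}(\gamma^+,\gamma^-)\rangle = \alpha_{\rho_z}(\gamma)$, which is the claimed identity. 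One must also check positivity of $\widetilde g_z$: by Theorem~\ref{Opposite Sign Lemma} the Labourie--Margulis invariant of a Margulis spacetime has a fixed sign, so all periods $\alpha_{\rho_z}(\gamma)$ are positive, and after possibly adding a Liv\v{s}ic coboundary (which does not change periods) one may assume $\widetilde g_z > 0$ pointwise; shrinking $D$ preserves this by continuity.

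Finally, the real analyticity of $z\mapsto[\phi^{\widetilde g_z}]\in\bP\HR(\phi)$ follows from that of $z\mapsto\widetilde g_z$ as a map into $\mathcal H^\upsilon(\U\Gamma)$: the reparametrization class depends only on the Liv\v{s}ic class of $h_{\widetilde g_z}\widetilde g_z$ where $h_{\widetilde g_z}$ is the entropy, and by Proposition~\ref{prop: pressureZero} together with the analyticity of pressure (Theorem~\ref{Analyticity of pressure}) and the implicit function theorem, $z\mapsto h_{\rho_z}$ and hence the normalized class vary analytically; descending to $\mathcal L^\upsilon(\U\Gamma)$ is continuous linear, so the composite is real analytic. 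I expect the main obstacle to be the cancellation step: one has to be careful that $\nu_{\rho_z}$ evaluated along the orbit of $\gamma$ genuinely is the normalized $1$-eigenvector and that the boundary terms match despite $\widetilde\sigma_z$ being defined only up to the equivariance on $\widetilde{\U\Gamma}$ — this requires invoking the precise defining property of the neutral map and the neutralized section from \cite[Proposition~5.2]{ghosh2023affine} rather than just formal manipulation.
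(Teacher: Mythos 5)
Your overall strategy is the same as the paper's (and Ghosh's original argument): lift the section $\sigma$ from Lemma~\ref{Lem:contracting_section} equivariantly, measure its displacement along the flow against the neutral direction, show by equivariance that the resulting periods are exactly $\alpha_{\rho_z}(\gamma)$, and then fix positivity. But there are two genuine gaps. The first is at the very definition of $\widetilde g_z$: you set $\widetilde g_z=\frac{d}{dt}\big|_{t=0}\langle \widetilde\sigma_z\mid\nu_{\rho_z}\rangle\circ\phi_t$, but the section produced by the contraction argument is only $\upsilon$-H\"older (transversely real analytic); nothing guarantees it is differentiable along flow lines, so this derivative need not exist. The paper avoids exactly this problem: it records the displacement as a H\"older cocycle $k_s(z,\widetilde p)$ defined by $\widetilde\sigma_z(\widetilde\psi_s(\widetilde p))=\widetilde\sigma_z(\widetilde p)+k_s(z,\widetilde p)\,\nu(\cdot)$, and then regularizes by the exponential-averaging trick, setting $\fk K_t=\log\bigl(\int_0^r e^{k_{s+t}}\,ds\big/\int_0^r e^{k_s}\,ds\bigr)$ and $\fk f_z=\partial_t|_{t=0}\fk K_t=(e^{k_r}-1)\big/\int_0^r e^{k_s}\,ds$; this is differentiable in $t$ by a change of variables using only the cocycle identity, is $\upsilon$-H\"older in $\widetilde p$, real analytic in $z$, and still has orbit integrals equal to $\alpha_{\rho_z}(\gamma)$. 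Your telescoping computation of the periods is fine in spirit (it is the same cancellation $\langle \tL(\gamma)\widetilde\sigma_z\mid\nu\rangle=\langle\widetilde\sigma_z\mid\nu\rangle$ that gives $k_{t_\gamma}=\alpha_{\rho_z}(\gamma)$ in the paper), but it presupposes the differentiability you have not established.

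The second gap is positivity together with analytic dependence on $z$. Saying that ``after possibly adding a Liv\v{s}ic coboundary one may assume $\widetilde g_z>0$'' is doubly insufficient: positivity of all periods alone does not yield a positive representative (that is precisely the nontrivial content of properness, i.e.\ Theorem~\ref{Opposite Sign Lemma}), and even granting a positive representative for each fixed $z$, an arbitrary choice of coboundary for each $z$ destroys the required real-analytic dependence of $z\mapsto\widetilde g_z$ as a map into $\mathcal H^{\upsilon}(\U\Gamma)$ --- which is what Proposition~\ref{Prop:Main Finsler} actually needs. The paper resolves this with a concrete, $z$-uniform construction: after shrinking $D$, it invokes Lemmas~A.1 and~A.2 of \cite{goldman2012geodesics} to find one $T>0$ such that the flow average $\widetilde g_z(x,y,t_0)=\frac1T\int_0^T\fk f_z(x,y,t_0+s)\,ds$ is pointwise positive for all $z$ in the smaller disk; this average is Liv\v{s}ic cohomologous to $\fk f_z$ (so the periods are unchanged) and manifestly preserves H\"older regularity and real analyticity in $z$. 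To repair your argument you would need to replace the pointwise flow derivative by such an averaged cocycle construction and to make the positivity step uniform in $z$; your final paragraph on the analyticity of $z\mapsto[\phi^{\widetilde g_z}]$ is then essentially correct.
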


\begin{proof}

Lift $\sigma$ to a $\Gamma$–equivariant section  
\[
\widetilde{\sigma}:D_0\times \widetilde{\U\Gamma}\to \mathbb{R}^{2n+1},
\qquad
\widetilde{\sigma}(z,\gamma\cdot \widetilde{p}) 
   = \rho_z(\gamma)\widetilde{\sigma}(z,\widetilde{p}).
\]
For each $z\in D_0$, set $\sigma_z(\widetilde{p})=\widetilde{\sigma}(z,\widetilde{p})$.

For $z\in D_0$ and $\widetilde{p}=(x,y,t)\in\widetilde{\U\Gamma}$, define $k_s(z,\widetilde{p})$ by
\[
\widetilde{\sigma}_z(\widetilde{\psi}_s(\widetilde{p}))
  = \widetilde{\sigma}_z(\widetilde{p})
    + k_s(z,\widetilde{p})\cdot 
      \nu(\widetilde{\sigma}_{\rL_{\rho_z}}(x,y)),
\]
where $\widetilde{\sigma}_{\rL_{\rho_z}}$ is the lift of the Anosov section for $\rL_{\rho_z}$.  
The cocycle property holds:
\[
k_{t+t'}(z,\widetilde{p})
   = k_t(z,\widetilde{\psi}_{t'}(\widetilde{p}))
     + k_{t'}(z,\widetilde{p}).
\]
The map $(z,\widetilde{p})\mapsto k_t(z,\widetilde{p})$ is $\upsilon$–Hölder in $\widetilde{p}$ and real analytic in $z$.

Fix $r>0$. Define
\[
\mathfrak{K}_t(z,\widetilde{p})
   = \log\!\left(
      \frac{\displaystyle\int_0^r \exp(k_{s+t}(z,\widetilde{p}))\,ds}
           {\displaystyle\int_0^r \exp(k_s(z,\widetilde{p}))\,ds}
     \right).
\]
Using the cocycle identity,  
\[
\mathfrak{K}_{t+t'}
   = \mathfrak{K}_t(\cdot,\widetilde{\psi}_{t'}(\cdot))
       + \mathfrak{K}_{t'}(\cdot).
\]
Define
\[
\mathfrak{f}_z(\widetilde{p})
  = \left.\frac{\partial}{\partial t}\right|_{t=0}
       \mathfrak{K}_t(z,\widetilde{p}).
\]

To compute explicitly $\fk f_z(\widetilde{p})$, we let $A_t = \displaystyle\int_0^r \exp(k_{s+t}(z,\widetilde{p}))\,ds$, and $A_0 = \displaystyle\int_0^r \exp(k_{s}(z,\widetilde{p}))\,ds$. Then, 
\[
\fk K(z,\widetilde{p}) = \log A_t - \log A_0.
\]
Differentiating with respect to $t$,  we get 
\[
\frac{\partial}{\partial t} \fk K_t(z, \widetilde{p})= \frac{A_t'}{A_t},
\]
where $A'_t= \frac{d}{dt}A_t$. More explicitly, 
\[
A_t' = \frac{d}{dt} \displaystyle \int_0^r \exp(k_{s+t}(z, \widetilde{p}) =\displaystyle \int_0^r \exp(k_{s+t}(z, \widetilde{p})) \cdot \frac{\partial}{\partial t} k_{s+t}(z, \widetilde{p}))\, ds, 
\]
and 
\[
A_0' = \displaystyle \int_0^r \exp(k_{s}(z, \widetilde{p})) \cdot \frac{\partial}{\partial t} k_{s}(z, \widetilde{p}))\, ds = \exp(k_s(z, \widetilde{p})) - \exp(k_0(z, \widetilde{p})).
\]
Since $k_0(z, \widetilde{p}) =0$, and $\exp(k_0(z,\widetilde{p}))= 1$, we have 
$$A_0'= \exp(k_r(z, \widetilde{p}))-1.$$
Therefore, 
\[
\fk f_z(\widetilde{p}) = \frac{A_0'}{A_0} =   \frac{\exp(k_r(z,\widetilde{p})) - 1}
      {\displaystyle\int_0^r \exp(k_s(z,\widetilde{p}))\,ds}.
\]

Note that $\mathfrak{f}_z$ is $\upsilon$–H\"{o}lder in $\widetilde{p}$ and real analytic in $z$. 

Let $\widetilde{p}_0 = (\gamma^-, \gamma^+,0)$. Then the element $\gamma \in \Gamma$ acts by $\gamma \cdot \widetilde{p}_0 = (\gamma^-, \gamma^+, t_{\gamma})$, where $t_{\gamma}$ is the period of $\gamma$. Since $\widetilde{\sigma}_u$ is $\Gamma$-equivariant, then $\widetilde{\sigma}_z(\gamma\cdot \widetilde{p}_0) = \rho_z(\gamma) \widetilde{\sigma}_z(\widetilde{p}_0)$. On the other hand, by the definition of $k_t$, we have 
\[
\widetilde{\sigma}_z(\gamma \cdot \widetilde{p}_0) = \widetilde{\sigma}_z(\widetilde{\psi}_{t_{\gamma}}(\widetilde{p}_0)) = \widetilde{\sigma}_z(\widetilde{p}_0) + k_{t_{\gamma}}(z, \widetilde{p}_0) \cdot\nu(\widetilde{\sigma}_{\tL_{\rho}}(\gamma^-, \gamma^+)).
\]
Also, we have $\rho_z(\gamma) \widetilde{\sigma}_z(\widetilde{p}_0) = \widetilde{\sigma}_z(\widetilde{p}_0) + \alpha_{\rho_z}(\gamma) \cdot \nu (\widetilde{\sigma}_{\tL_{\rho}}(\gamma^-, \gamma^+))$. Therefore, by comparing we get
$$k_{t_{\gamma}}(z, \widetilde{p}_0) = \alpha_{\rho_z}(\gamma).$$
Also, by the cocycle property for $t=t_{\gamma}$, we get 
\[
k_{s+t_{\gamma}}(z, \widetilde{p}_0) = k_s(z, \widetilde{p}_0) + \alpha_{\rho_z}(\gamma).
\]
Thus, $\fk K_t$ satisfies the following cocycle property:
\[
\fk K_{t+s} (z, \widetilde{p}) = \fk K_t (z, \widetilde{\psi}_t(\widetilde{p})) + \fk K_s(z, \widetilde{p}). 
\]
Differentiating with respect to $t$ at $t=0$, we get 
\[
\frac{\partial}{\partial t}\Bigg|_{t=0} \fk K_{t+s}(z, \widetilde{p}) = \frac{\partial}{\partial t}\Bigg|_{t=0} \fk K_{t}(z, \widetilde{\psi}_s(\widetilde{p}))= \fk f_z(\widetilde{\psi}_s(\widetilde{p})).
\]
Therefore, 
\[
\displaystyle \int_0^{t_{\gamma}} \fk f_z(\widetilde{\psi}_s(\widetilde{p}_0))\, ds = \fk K_{t_{\gamma}}(z, \widetilde{p}_0) - \fk K_0(z, \widetilde{p}_0) = \fk K_{t_{\gamma}}(z, \widetilde{p}_0) -1.
\]

For $\gamma\in\Gamma$ of infinite order with period $t_{\gamma}$,  
\[
\int_0^{t_{\gamma}}
   \mathfrak{f}_z(\gamma^{-},\gamma^{+},s)\,ds
   = {\mathfrak{K}}_{t_{\gamma}}(z,(\gamma^{-},\gamma^{+},0))
   = \alpha_{\rho_z}(\gamma).
\]
Since $k_{t+t_{\gamma}} = k_s + \alpha_{\rho_z}(\gamma)$,
\[
\displaystyle \int_0^r \exp(k_{s+t_{\gamma}})\, ds =\exp(\alpha_{\rho_z}(\gamma)) \int_0^r \exp(k_s)\, ds.
\]
Therefore, 
\[
\fk K_{t_{\gamma}}(z, \widetilde{p}_0) = \log(\exp(\alpha_{\rho_z}(\gamma))) = \alpha_{\rho_z}(\gamma). 
\]
Since $\widetilde{\psi}_s(\widetilde{p}_0) = (\gamma^-, \gamma^+, s)$, we have
\[
\displaystyle \int_0^{t_{\gamma}} \fk f_z(\gamma^-, \gamma^+, s)\, ds = \alpha_{\rho_z}(\gamma).
\]
Hence, by the Liv\v{s}ic theorem~\ref{thm:Livsic}, $\fk f_z$ is Liv\v{s}ic cohomologous to $\fk f_{\rho_z}$. Now using Lemmas A.1 and A.2 of \cite{goldman2012geodesics}, there exists an open disk $D_1 \subset D_0$, a positive real number $T$ such that for all $z \in D_1$
\[
\widetilde{g}_z(x,y,t_0) := \frac{1}{T} \displaystyle \int_0^T \fk f_z(x,y, t_0+s)\, ds >0.   
\]
This completes our proof.
\end{proof}

Let $\{\tilde{g}_z\}$ be a real analytic family  as in Proposition~\ref{Prop:Main Finsler}, then by \cite[Proposition~3.12]{bridgeman2015pressure}, the map  $z \mapsto h_{\phi^{\tilde{g}_z}}$ is real analytic. Therefore, $z \mapsto h_{\rho_z}$ is also real analytic.

    We observe that in Definition~\ref{Def:Finsler norm} it is sufficient to consider only  periodic orbits while taking the supremum, that is, 
     \[
    \lVert v\rVert_\Th = \sup_{\gamma \in O} \frac{
       \dot{h}_{\rho}(v)\,\alpha_\rho(\gamma)
       + h_\rho\, \dot{\alpha}_{\rho}(\gamma))(v)
     }{h_\rho\,\alpha_\rho(\gamma)}.
    \]
    Indeed, since  the periodic orbits  of the flow  are in one-to-one correspondence with conjugacy classes of infinite order elements of $\Gamma$.

\subsection{Proof of Theorem~\ref{Lemma:Tgnt Vec}}
    We begin by noting that, the derivative of $h_s$ (resp. $\alpha_s(\gamma)$) at $s = 0$ is 
    $\dot{h}_{\rho}(v) = \frac{d}{ds}\big|_{s=0} h_s$ (resp.$\dot{\alpha}_{\rho} (\gamma)(v) = \frac{d}{ds}\big|_{s=0} \alpha_{s}(\gamma)$). Then by Proposition~\ref{Prop:Main Finsler} we have
    \[\lVert v\rVert_\Th = \sup_{a \in O} \frac{d}{ds}\bigg|_{s=0} \frac{h_s}{h_{\rho}} \frac{\alpha_s(\gamma)}{\alpha_{\rho} (\gamma)} =  \sup_{a \in O} \frac{d}{ds}\bigg|_{s=0} \frac{h_s}{h_0} \frac{\int_{\U\Gamma} \tilde{g}_s d \mu_{\gamma}}{\int_{\U\Gamma} \tilde{g}_0 \sd \mu_{\gamma}}.
    \]
    Hence, 
    \[
    \lVert v\rVert_\Th = \sup_{a \in O} \frac{d}{ds}\bigg|_{s=0} \frac{\int_{\U\Gamma}g_s d\mu_{\gamma}}{\int_{\U\Gamma} g_0d\mu_{\gamma}}=  \sup_{a \in O} \frac{\int_{\U\Gamma} \dot{g}_0d \mu_{\gamma}}{\int_{\U\Gamma} g_0 d\mu_{\gamma}}.
    \]
Since periodic orbits are dense, the supremum over periodic orbits $O$ equals the supremum over all invariant probability measures. Combining this  with  Theorem~\ref{Thm:Fins1}, we obtain our desired conclusion.
\qed

\begin{cor}\label{Coro:derivative of thurston metric}
    Let $\{\rho_z\}_{z \in D} \subset \Hom_\M(\Gamma, \sG)$ be a real analytic family of Margulis spacetimes parameterizing a neighborhood of $\rho = \rho_0$. Fix an analytic path $z: (-1,1) \rightarrow D$ with $z(0) = 0$, and let $\rho_s = \rho_{z(s)}$ and $v = \frac{d}{d s}\big|_{s=0} \rho_s\rho^{-1}$ be the tangent vector at $\rho$. Then $s \mapsto d_\Th(\rho, \rho_s)$ is differentiable at $s=0$ and 
    \[\lVert v\rVert_{\Th} = \frac{d}{d s}\bigg|_{s=0} d_{\Th} (\rho, \rho_s).\]
\end{cor}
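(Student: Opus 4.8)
The plan is to differentiate the defining expression of $d_\Th(\rho,\rho_s)$ at $s=0$ and to recognise the result as the Thurston norm $\lVert v\rVert_\Th$, reusing the thermodynamic setup built for Theorem~\ref{Lemma:Tgnt Vec}. First I would rewrite $d_\Th(\rho,\rho_s)$ in terms of flow data: by Proposition~\ref{Prop:Main Finsler}, after shrinking $D$ there are $\upsilon>0$ and a real-analytic family of positive $\upsilon$-H\"older functions $\{\widetilde g_z\}_{z\in D}$ with $\int_0^{\ell(\gamma)}\widetilde g_z(\phi_t x)\,dt=\alpha_{\rho_z}(\gamma)$. Set $g_s:=h_{\rho_s}\widetilde g_{z(s)}$, so that integrating $g_s$ along the periodic orbit $a\in O$ corresponding to the conjugacy class of $\gamma$ gives $h_{\rho_s}\alpha_{\rho_s}(\gamma)$, hence $\int_{\U\Gamma}g_s\,d\mu_a=h_{\rho_s}\alpha_{\rho_s}(\gamma)/\ell(a)$ with $\mu_a$ the $\phi$-invariant probability measure on $a$. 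Using the bijection between conjugacy classes of infinite-order elements of $\Gamma$ and periodic orbits, Definition~\ref{Def:Asym} (equivalently Lemma~\ref{Lemma:Th 1} with Theorem~\ref{Th:Thasm-Therm}) gives
\[
d_\Th(\rho,\rho_s)=\log\Big(\sup_{a\in O}R_a(s)\Big),\qquad R_a(s):=\frac{\int_{\U\Gamma}g_s\,d\mu_a}{\int_{\U\Gamma}g_0\,d\mu_a},
\]
where each $R_a$ is positive, real-analytic in $s$, and $R_a(0)=1$.

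Next I would differentiate $s\mapsto\log\sup_{a\in O}R_a(s)$ at $s=0$. Since $R_a(0)=1$ for \emph{every} $a$, the function vanishes at $s=0$ and its derivative there equals that of $\sup_{a\in O}R_a(s)$; the point is to push the derivative through the supremum, obtaining
\[
\frac{d}{ds}\Big|_{s=0}d_\Th(\rho,\rho_s)=\sup_{a\in O}\dot R_a(0).
\]
The right-hand side is exactly the quantity identified with $\lVert v\rVert_\Th$ in the proof of Theorem~\ref{Lemma:Tgnt Vec} (equivalently, as periodic-orbit measures are dense in $\mathcal P(\phi)$, the supremum over $O$ equals that over $\mathcal P(\phi)$, and one reads off $\lVert v\rVert_\Th$ from Theorem~\ref{Thm:Fins1}), which gives both the differentiability at $0$ and the claimed formula. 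A more conceptual packaging: by Lemma~\ref{Lemma:Th 1}, $d_\Th(\rho,\rho_s)=\sd_\Th(\tR(\rho),\tR(\rho_s))$, the curve $s\mapsto\tR(\rho_s)$ is real-analytic into $\bP\HR^\upsilon(\phi)$ with velocity the Liv\v{s}ic class $[\dot g_0]\in T_{[\phi^{g_0}]}\bP\HR^\upsilon(\phi)$, and differentiating $\sd_\Th$ along it reduces, by the same interchange, to $\lVert[\dot g_0]\rVert_\sd$, which equals $\lVert v\rVert_\Th$ by Theorem~\ref{Lemma:Tgnt Vec}.

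The main obstacle is the interchange $\tfrac{d}{ds}\big|_{s=0}\sup_{a}R_a(s)=\sup_a\dot R_a(0)$, i.e.\ showing that $R_a(s)=1+s\,\dot R_a(0)+o(s)$ holds \emph{uniformly in $a\in O$}. This is where the real-analyticity in Proposition~\ref{Prop:Main Finsler} is essential: since $z\mapsto\widetilde g_z$, hence $s\mapsto g_s$, is analytic into the Banach space $\mathcal H^\upsilon(\U\Gamma)$, on a small interval the norms $\lVert g_s\rVert_\upsilon,\lVert\dot g_s\rVert_\upsilon,\lVert\ddot g_s\rVert_\upsilon$ are bounded; together with a uniform positive lower bound for $\int_{\U\Gamma}g_0\,d\mu$ over the weak-$*$ compact set $\mathcal P(\phi)$ (valid as $g_0$ is continuous and strictly positive), Taylor's theorem yields $|R_a(s)-1-s\,\dot R_a(0)|\le C s^{2}$ with $C$ independent of $a$; in particular $\sup_a|\dot R_a(0)|<\infty$. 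Inserting this into $\sup_a R_a(s)$ gives $\sup_a R_a(s)=1+s\sup_a\dot R_a(0)+O(s^{2})$ as $s\to 0^{+}$, which is the one-sided derivative $\tfrac{d}{ds}\big|_{s=0^{+}}$ — the natural reading for an asymmetric metric. All remaining ingredients (analyticity of $s\mapsto h_{\rho_s}$, of $s\mapsto\alpha_{\rho_s}(\gamma)$, and of the period integrals $\int_{\U\Gamma}g_s\,d\mu_a$) are already supplied by Proposition~\ref{Prop:Main Finsler} and \cite{ghosh2023margulis,bridgeman2015pressure}, so once the uniform estimate is in place the rest is routine.
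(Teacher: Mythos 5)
Your proposal is correct and follows the same overall reduction as the paper: use Proposition~\ref{Prop:Main Finsler} and Lemma~\ref{Lemma:Th 1} to transport $d_\Th(\rho,\rho_s)$ to the reparametrization space as $\log\sup_{a\in O}R_a(s)$ with $R_a(s)$ a ratio of period integrals of $g_s=h_{\rho_s}\widetilde g_{z(s)}$, and then identify the resulting derivative with $\lVert v\rVert_\Th$ through Theorem~\ref{Lemma:Tgnt Vec} and Theorem~\ref{Thm:Fins1}. The genuine difference is at the key analytic step: the paper's proof stops at the reduction and delegates the differentiability of $s\mapsto \sd_\Th$ along the analytic path $s\mapsto[\phi^{g_s}]$ to the cited thermodynamic results of Carvajales--Dai--Pozzetti--Wienhard, whereas you prove the interchange $\tfrac{d}{ds}\big|_{s=0}\sup_a R_a(s)=\sup_a\dot R_a(0)$ directly, using real-analyticity of $s\mapsto g_s$ into $\mathcal H^{\upsilon}(\U\Gamma)$, the uniform lower bound $\int g_0\,d\mu\ge \min g_0>0$ over $\mathcal P(\phi)$, and a Taylor remainder estimate uniform over periodic orbits. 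This buys a self-contained verification of exactly the point the paper's citation hides, and it also makes explicit that the clean conclusion is the one-sided derivative at $s=0^{+}$ (two-sided differentiability would force $\sup_a\dot R_a(0)=\inf_a\dot R_a(0)$), a subtlety the paper does not address; this is the natural reading for an asymmetric metric and matches the intended statement. One caveat on conventions: your $R_a(s)$ puts the data of $\rho_s$ in the numerator, which is the reading under which the derivative equals $\lVert v\rVert_\Th$ (and matches Theorem~\ref{Th:Thasm-Therm} and the computation in the proof of Theorem~\ref{Lemma:Tgnt Vec}), even though Definition~\ref{Def:Asym} literally places the first argument on top; you have silently resolved this internal inconsistency of the paper in the direction required for the corollary to hold, which is the right call but worth flagging.
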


\begin{proof}
Observe that, if we set  $h_s := h_{\rho_z}$ and $g_s := h_s \tilde{g}_{z(s)}$, then $[\phi^{g_s}] = \left[\phi^{\tilde{g}_z}\right]$. It then follows from Lemma~\ref{Lemma:Th 1} and Proposition~\ref{Prop:Main Finsler}  that $d_\Th (\rho, \rho_s) = d_\Th \left(\left[\phi^{\tilde{g}_0}\right], \left[\phi^{\tilde{g}_z}\right]\right)$. Therefore, by  Theorem~\ref{Thm:Fins1} and Lemma~\ref{Lemma:Tgnt Vec}, the map $s \mapsto d_\Th(\rho, \rho_s)$ is differentiable at $s=0$ and 
\[
\frac{d}{d s}\bigg|_{s=0} d_{\Th} (\rho, \rho_s) = \lVert v\rVert_\Th.  
\]
This completes the proof. 
\end{proof}

\subsection{Proof of Theorem~\ref{Theorem:Properties of Finsler norm} }   By positivity in Theorem~\ref{Thm:Fins1} and Lemma~\ref{Lemma:Tgnt Vec}, the function $\lVert\cdot\rVert_\Th$ is real valued, non-negative, $\bR_{>0}$-homogeneous, and satisfies the triangle inequality.  We note that if we set  $h_s := h_{\rho_z}$ and $g_s := h_s \tilde{g}_{z(s)}$, then, by Lemma~\ref{Lemma:Tgnt Vec}, the condition $\lVert v\rVert_\Th = 0$ implies $\lVert\left[\dot{g}_0]\right\rVert_\Th =0$. Therefore, by positivity in Theorem~\ref{Thm:Fins1}, we have $\left[\dot{g}_0\right]= 0$, By the Liv\v{s}ic Theorem~\ref{thm:Livsic}, it follows that $\dot{g}_0$ is Liv\v{s}ic cohomologous to $0$; that is,  for all $\gamma \in O$
    \[
    \int_0^{\ell(\gamma)} \frac{d}{ds}\bigg|_{s=0} g_s (\phi_t(x))dt = \frac{d}{ds}\bigg|_{s=0} \int_{0}^{\ell(\gamma)} g_s(\phi_t(x))dt = 0,
    \]
    Hence, by Proposition~\ref{Prop:Main Finsler}, we have 
    \[
    0 = \frac{d}{ds}\bigg|_{s=0} h_s \int_0^{\ell(\gamma)} \tilde{g}_s (\phi_t (x))dt = \frac{d}{ds}\bigg|_{s=0} h_s \alpha_{\rho_z} (\gamma). 
    \]
    Therefore, by chain rule,
    \[
    \dot{h}_{\rho}(v) \alpha_{\rho}(\gamma) + h_{\rho} \dot{\alpha}_{\rho} (\gamma))(v) =0.
    \]
    Hence,
    \[
    \dot{\alpha}_{\rho} (\gamma)(v) = -\frac{\dot{h}_{\rho}(v)}{h_{\rho}} \alpha_{\rho}(\gamma).
    \]

    Moreover, if $\rho \in \M_k (\Gamma, \sG)$, then $\dot{h}_{\rho}(v) = 0$, which in turn implies that $\dot{\alpha}_{\rho} (\gamma)(v) = 0$. By  Theorem~2.1 of Ghosh~\cite{ghosh2023margulis}, we conclude that the function  
    \[
    \lVert v\rVert_\Th = \sup_{[\gamma]\in [\Gamma_\tH]} \frac{d}{ds}\bigg|_{t=0}\log \alpha_{\rho_s}(\gamma)
    \]
    defines a Finsler norm on $\M_k(\Gamma, \sG)$.
\qed

\subsection{Pressure duality and the infinitesimal form}\label{Pressure metric}

Let $f_{\rho}$ denote the Labourie--Margulis invariant
and $\mu_{-h_{\rho}f_{\rho}}$ the equilibrium state for the potential $-h_{\rho}f_{\rho}$.
Following \cite{bridgeman2015pressure}, we define the \emph{pressure form} by
\[
 \mathcal{P}_{\rho}(v,w)
=
\int g_v\, g_w \, d\mu_{-h_{\rho}f_{\rho}}
-
\Big(\int g_v\,d\mu_{-h_{\rho}f_{\rho}}\Big)
\Big(\int g_w\,d\mu_{-h_{\rho}f_{\rho}}\Big),
\]
where $g_v,g_w$ represent the Liv\v{s}ic cohomology classes
associated to the tangent vectors $v,w\in T_{\rho}\M_z(\Gamma,\sG)$.
The form ${\mathbf P}_{\rho}$ is positive semi-definite and  
induces a Riemannian metric on the subspace of constant entropy.

\begin{prop}\label{pressure-finsler}
For each $\rho\in \M_k(\Gamma,\sG)$ and $v\in T_{\rho}\M_k(\Gamma,\sG)$,
the Finsler norm satisfies the inequality
\[
\|v\|_{\Th}
\;\leq\;
C\,\sqrt{\mathcal{P}_{\rho}(v,v)},
\]
for a uniform constant $C>0$ depending only on the choice of normalization of $f_{\rho}$.
\end{prop}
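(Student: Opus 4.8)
The plan is to transport the inequality to the thermodynamic model $\bP\HR^{\upsilon}(\phi)$ along the map $\tR$ of Subsection~\ref{Def: Pull back}, and then to compare two norms on the \emph{finite–dimensional} vector space $\rT_{\rho}\M_k(\Gamma,\sG)$. Write $g_0:=h_{\rho}f_{\rho}$, so that $\tR(\rho)=[\phi^{g_0}]$. Since $\rho\in\M_k(\Gamma,\sG)$ has constant entropy we may take $g_s=h_s\widetilde g_{z(s)}=k\,\widetilde g_{z(s)}$ in Theorem~\ref{Lemma:Tgnt Vec}, whose derivative $\dot g_0$ represents the Liv\v{s}ic class $g_v$ attached to $v$ and lies in $T_{[\phi^{g_0}]}\bP\HR^{\upsilon}(\phi)$; by the description of that tangent space in Subsection~\ref{hr} one has $\int g_v\,d\mu_{-g_0}=0$, where $\mu_{-g_0}=\mu_{-h_{\rho}f_{\rho}}$ is the relevant equilibrium state. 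Combining Theorem~\ref{Lemma:Tgnt Vec} with the formula of Theorem~\ref{Thm:Fins1} gives
\[
\|v\|_{\Th}=\|g_v\|_{\sd}=\sup_{\mu\in\mathcal P(\phi)}\frac{\int g_v\,d\mu}{\int g_0\,d\mu},
\]
and, because $\int g_v\,d\mu_{-g_0}=0$, the pressure form of the statement collapses to the plain variance, $\mathcal P_{\rho}(v,v)=\int g_v^{2}\,d\mu_{-g_0}=\|g_v\|_{L^2(\mu_{-g_0})}^{2}$.

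Next I would bound the denominator from below. By Theorem~\ref{Opposite Sign Lemma} the Labourie--Margulis potential $f_{\rho}$ has a fixed sign; normalizing it to be positive, its continuity on the compact space $\U\Gamma$ gives $m:=\min_{\U\Gamma}g_0>0$, hence $\int g_0\,d\mu\ge m$ for every $\mu\in\mathcal P(\phi)$. Since $\|v\|_{\Th}\ge 0$ by Theorem~\ref{Theorem:Properties of Finsler norm}, this already yields
\[
\|v\|_{\Th}\;\le\;\frac1m\,\sup_{\mu\in\mathcal P(\phi)}\Bigl|\int g_v\,d\mu\Bigr|\;\le\;\frac{\|g_v\|_{\infty}}{m},
\]
so that $\|\cdot\|_{\Th}$ is a finite–valued, convex, $\bR_{>0}$–homogeneous function on $\rT_{\rho}\M_k(\Gamma,\sG)$ (convexity and homogeneity being part of Theorem~\ref{Theorem:Properties of Finsler norm}).

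For the core step I would invoke that, by the extension of the Bridgeman--Canary--Labourie--Sambarino construction to Margulis spacetimes in \cite{ghosh2023margulis} (see also the discussion preceding Proposition~\ref{pressure-finsler}), the pressure form $\mathcal P_{\rho}$ is positive \emph{definite} on $\rT_{\rho}\M_k(\Gamma,\sG)$; thus $v\mapsto\sqrt{\mathcal P_{\rho}(v,v)}$ is a genuine norm on this finite–dimensional space. A finite–valued convex function on a finite–dimensional space is continuous, hence $\|\cdot\|_{\Th}$ attains a finite maximum $C_0$ on the compact unit sphere $\{\,v:\mathcal P_{\rho}(v,v)=1\,\}$, and $\bR_{>0}$–homogeneity upgrades this to $\|v\|_{\Th}\le C_0\sqrt{\mathcal P_{\rho}(v,v)}$ for all $v\in\rT_{\rho}\M_k(\Gamma,\sG)$. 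Tracking the estimate $\|v\|_{\Th}\le m^{-1}\sup_{\mu}|\int g_v\,d\mu|$ through this comparison exhibits the constant as controlled by $m$ — that is, by the chosen normalization of $f_{\rho}$ — together with the shape of the pressure unit ball.

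The main obstacle is exactly this comparison step. For an \emph{arbitrary} H\"older function $g$ on $\U\Gamma$ with $\int g\,d\mu_{-g_0}=0$ the quantity $\sup_{\mu}|\int g\,d\mu|$ is \emph{not} controlled by $\|g\|_{L^2(\mu_{-g_0})}$ — a potential sharply concentrated near a single periodic orbit to which $\mu_{-g_0}$ assigns little mass is a counterexample — so one genuinely cannot avoid using that the classes $g_v$ sweep out only the image of the finite–dimensional tangent space $\rT_{\rho}\M_k(\Gamma,\sG)$, on which $\mathcal P_{\rho}$ is nondegenerate, which forces the two positively–homogeneous quantities to be comparable. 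A secondary technical point is bookkeeping: the identity $\|v\|_{\Th}=\|g_v\|_{\sd}$ with $\int g_v\,d\mu_{-g_0}=0$ must be applied with the constant–entropy normalization $g_s=k\,\widetilde g_{z(s)}$ of Proposition~\ref{Prop:Main Finsler} and Theorem~\ref{Lemma:Tgnt Vec}, and the supremum over $\mathcal P(\phi)$ should first be reduced to one over periodic orbits using density of periodic measures, exactly as in the proof of Theorem~\ref{Lemma:Tgnt Vec}.
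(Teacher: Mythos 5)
Your argument is correct (granted the inputs the paper itself supplies), but it proceeds along a genuinely different route from the paper's proof. The paper's proof is a one-line application of Cauchy--Schwarz against the equilibrium state: starting from $\|v\|_{\Th}=\sup_{\mu\in\mathcal P(\phi)}\frac{\int g_v\,d\mu}{\int f_\rho\,d\mu}$, it bounds $\bigl|\int g_v\,d\mu_{-h_\rho f_\rho}\bigr|\le\sqrt{\mathcal P_\rho(v,v)}\,\sqrt{\int f_\rho\,d\mu_{-h_\rho f_\rho}}$ and concludes, so its constant is explicit, essentially $C=\bigl(\int f_\rho\,d\mu_{-h_\rho f_\rho}\bigr)^{-1/2}$ (up to the harmless factor $k$ coming from your normalization $g_0=h_\rho f_\rho$). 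That estimate, however, only controls the ratio evaluated at the single measure $\mu_{-h_\rho f_\rho}$, whereas the Thurston norm is a supremum over all of $\mathcal P(\phi)$ with a $\mu$-dependent denominator; this is exactly the difficulty you isolate in your ``main obstacle'' paragraph, and your remark about a mean-zero potential concentrated near a periodic orbit of small equilibrium mass explains why an $L^2(\mu_{-h_\rho f_\rho})$ bound cannot dominate that supremum for arbitrary H\"older classes. Your route --- finiteness of $\|\cdot\|_{\Th}$ via the lower bound $\int g_0\,d\mu\ge m>0$, convexity and $\bR_{>0}$-homogeneity from Theorem~\ref{Theorem:Properties of Finsler norm}, hence continuity on the finite-dimensional space $T_\rho\M_k(\Gamma,\sG)$, then compactness of the pressure unit sphere using the positive definiteness of $\mathcal P_\rho$ on the constant-entropy slice asserted in Subsection~\ref{Pressure metric} (following \cite{ghosh2023margulis}) --- does bound the genuine supremum, at the price of a non-explicit constant: your $C_0$ depends on $\rho$ through the shape of the pressure unit ball, so it delivers a weaker statement about the constant than the claim that $C$ depends only on the normalization of $f_\rho$ (a dependence which, in fairness, is already present in the equilibrium-state quantity entering the paper's own constant). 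In short, the paper buys an explicit constant by testing only against the equilibrium measure, while you buy control of the full supremum by a finite-dimensional norm comparison; the two proofs are logically independent, and yours is the more robust, if less quantitative, of the two.
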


\begin{proof}
By Lemma~\ref{Lemma:Tgnt Vec},
$\|v\|_{\Th}=\sup_{\mu\in\mathcal P(\phi)}
\frac{\int g_v\,d\mu}{\int f_{\rho}\,d\mu}$.
By the Cauchy--Schwarz inequality for the equilibrium measure
$\mu_{-h_{\rho}f_{\rho}}$ we have
\[
\Big|\int g_v\,d\mu_{-h_{\rho}f_{\rho}}\Big|
\le
\sqrt{\mathcal{P}_{\rho}(v,v)}\,
\sqrt{\int f_{\rho}\,d\mu_{-h_{\rho}f_{\rho}} }.
\]
Since the denominator in the definition of $\|\cdot\|_{\Th}$ is precisely
$\int f_{\rho}\,d\mu_{-h_{\rho}f_{\rho}}$,
the desired estimate follows.
\end{proof}

\subsection{Closed balls in Thurston metric}
We next observe that the Thurston norm for Margulis spacetimes enjoys the same compactness property for closed unit balls as the Finsler norm on the Teichm\'{u}ller space.
\begin{prop}
    Let $\Gamma$  be a finitely generated word hyperbolic group, then the closed unit ball $$\{v \in \rT_{\rho}\M_k(\Gamma, \sG) \mid \lVert v\rVert_\Th \leq 1\}$$ is compact.
\end{prop}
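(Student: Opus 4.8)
The plan is to reduce the statement to the elementary fact that a finite‑valued, non‑negative, $\bR_{>0}$‑homogeneous, subadditive functional on a finite‑dimensional real vector space has compact sublevel sets as soon as it vanishes only at the origin. The tangent space $\rT_{\rho}\M_k(\Gamma,\sG)$ is finite‑dimensional: since $\Gamma$ is finitely generated, $\Hom_\M(\Gamma,\sG)$ is a subvariety of a finite product of copies of the Lie group $\sG$, and by \cite{ghosh2018pressure} the moduli space $\M(\Gamma,\sG)$ is an analytic manifold, so its analytic hypersurface $\M_k(\Gamma,\sG)=\M(\Gamma,\sG)\cap h^{-1}(k)$ has finite‑dimensional tangent spaces. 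I would fix once and for all an auxiliary Euclidean norm $\lVert\cdot\rVert_0$ on $\rT_{\rho}\M_k(\Gamma,\sG)$, with associated compact unit sphere $S=\{v:\lVert v\rVert_0=1\}$.

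First I would record the analytic properties of $\lVert\cdot\rVert_\Th$ on $\rT_{\rho}\M_k(\Gamma,\sG)$. By Theorem~\ref{Theorem:Properties of Finsler norm}, the function $\lVert\cdot\rVert_\Th$ is real‑valued (never $+\infty$), non‑negative, $\bR_{>0}$‑homogeneous, and satisfies the triangle inequality. Positive homogeneity together with subadditivity gives, for $\lambda\in[0,1]$ and $v,w\in\rT_{\rho}\M_k(\Gamma,\sG)$,
\[
\lVert\lambda v+(1-\lambda)w\rVert_\Th\le\lVert\lambda v\rVert_\Th+\lVert(1-\lambda)w\rVert_\Th=\lambda\lVert v\rVert_\Th+(1-\lambda)\lVert w\rVert_\Th,
\]
so $\lVert\cdot\rVert_\Th$ is convex; a finite convex function on a finite‑dimensional space is continuous, so $\lVert\cdot\rVert_\Th$ is continuous. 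Next I would invoke the non‑degeneracy part of Theorem~\ref{Theorem:Properties of Finsler norm}: on $\M_k(\Gamma,\sG)$ one has $\lVert v\rVert_\Th=0$ if and only if $\dot{\alpha}_{\rho}(\gamma)(v)=0$ for every $\gamma\in\Gamma_\tH$, and by \cite[Theorem~2.1]{ghosh2023margulis} the infinitesimal marked Margulis spectrum is injective on $\rT_{\rho}\M_k(\Gamma,\sG)$, whence $\lVert v\rVert_\Th=0$ forces $v=0$.

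With these two facts compactness is immediate. The restriction $\lVert\cdot\rVert_\Th|_{S}$ is continuous and strictly positive on the compact set $S$, so $m:=\min_{v\in S}\lVert v\rVert_\Th>0$; by $\bR_{>0}$‑homogeneity this yields $\lVert v\rVert_\Th\ge m\lVert v\rVert_0$ for all $v$, hence
\[
\{v\in\rT_{\rho}\M_k(\Gamma,\sG):\lVert v\rVert_\Th\le1\}\subseteq\{v:\lVert v\rVert_0\le 1/m\},
\]
which is bounded. It is also closed, being the preimage of $(-\infty,1]$ under the continuous map $\lVert\cdot\rVert_\Th$. A closed bounded subset of a finite‑dimensional normed space is compact, so the closed unit ball is compact.

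The only genuine input beyond soft functional analysis is the non‑degeneracy of $\lVert\cdot\rVert_\Th$, i.e.\ the infinitesimal rigidity of the marked Margulis invariant spectrum on $\M_k(\Gamma,\sG)$ coming from \cite[Theorem~2.1]{ghosh2023margulis}; this is exactly the ingredient already used to conclude Theorem~\ref{Theorem:Properties of Finsler norm}, so no new obstacle appears. The apparent subtlety that $\lVert\cdot\rVert_\Th$ is a supremum over the infinite family $[\Gamma_\tH]$, and hence \emph{a priori} only lower semicontinuous, is dissolved by the convexity argument above, which promotes finiteness to continuity automatically.
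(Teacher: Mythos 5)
Your proof is correct and follows essentially the same route as the paper: reduce to finite-dimensionality of $\rT_{\rho}\M_k(\Gamma,\sG)$ (which the paper gets from finiteness of $\dim H^1_{\Ad\circ\rho}(\Gamma,\fk g^\tL\ltimes\bR^{2n+1})$ for finitely generated $\Gamma$, while you invoke the analytic manifold structure of $\M(\Gamma,\sG)$ and the hypersurface $\M_k(\Gamma,\sG)$), and then conclude compactness of the unit ball of $\lVert\cdot\rVert_\Th$ in a finite-dimensional space. Your elaboration of this last step --- continuity of $\lVert\cdot\rVert_\Th$ via convexity from homogeneity and subadditivity, and definiteness via the characterization in Theorem~\ref{Theorem:Properties of Finsler norm} together with the infinitesimal rigidity of the Margulis spectrum --- correctly supplies the details that the paper compresses into the assertion that closed unit balls of norms in finite dimensions are compact.
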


\begin{proof}
   We first note that $\dim (\fk g \ltimes \bR^{2n+1}) = n(2n+1) +2n+1 = (n+1)(2n+1)$. Thus, $\fk g^\tL \ltimes \bR^{2n+1}$ is a finite-dimensional real vector space. The action of $\Gamma$ on $\fk g^\tL \ltimes \bR^{2n+1}$  is given by $$\gamma \cdot v = Ad (\rho (\gamma)) (v), \quad \gamma\in \Gamma,\, v \in \fk g^\tL \ltimes\bR^{2n+1}.$$ 
    Since  $\rho(\gamma) \in \sG$, and $\sG$ is a finite-dimensional Lie group, $Ad(\rho(\gamma))$ is a linear automorphism of $\fk g^\tL \ltimes \bR^{2n+1}$. Hence, $\fk g^\tL \ltimes \bR^{2n+1}$ is a finite-dimensional $\Gamma$-module (over $\bR$). Now since $\Gamma$ is finite generated, and $\fk g^\tL \ltimes\bR^{2n+1}$ is a finite dimensional $\Gamma$-module, the first cohomology group $\dim H^1_{Ad \circ \rho} (\Gamma, \mathesstixfrak g^\tL \ltimes \bR^{2n+1}) < \infty$, where $\fk g^\tL$ is the Lie algebra of $\sG^\tL$. Thus, the tangent space $\rT_{\rho}\M_k(\Gamma, \sG)$ is a finite dimensional vector space.  Since in a finite dimensional vector space, the closed unit ball of a norm is compact, the desired conclusion follows.
\end{proof}

\section{Convexity}\label{Section:5}
In this section we prove Theorem~\ref{Theorem:Convexity intro}. The proof has been  divided into the following components, which together establish the result.

\begin{thm}\label{Thm:convexity1}
    Let $\Gamma$ be a word hyperbolic group and let $\rho_0, \rho_t \in \M_k(\Gamma, \sG)$ be two Margulis spacetimes with constant entropy $k$.  Let $\rho_t = (\tL, (1-t)u_0 + tu_1)$ for $t \in [0,1]$, where $\tL$ is a fixed Anosov linear part and $u_0,u_1$  are translational parts such that $\rho_t \in \M(\Gamma, \sG)$ for all $t$.  Then the function 
    $$t \mapsto d_\Th(\rho, \rho_t)$$ is convex on $[0,1]$.
\end{thm}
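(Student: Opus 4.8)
The plan is to reduce the statement to the convexity of a supremum of convex functions, after understanding the $t$-dependence of the quantity $d_\Th(\rho,\rho_t)$. Since the linear part $\tL$ is fixed along the path $\rho_t = (\tL, (1-t)u_0 + tu_1)$, the unit $1$-eigenvector $v_0^{\tL(\gamma)}$ of $\tL(\gamma)$ is the same for every $t$, and by linearity of the Margulis invariant in the translational part (as used in the proof of Theorem~\ref{Th: fixed }) one has $\alpha_{\rho_t}(\gamma) = (1-t)\alpha_{\rho_0}(\gamma) + t\,\alpha_{\rho_1}(\gamma)$ for each infinite-order $\gamma$. Moreover, since $\rho_t \in \M_k(\Gamma,\sG)$ for all $t$, the entropy $h_{\rho_t} = k$ is constant, so the renormalization by entropy drops out and $d_\Th(\rho,\rho_t) = \log\bigl(\sup_{[\gamma]\in[\Gamma_\tH]} \alpha_\rho(\gamma)/\alpha_{\rho_t}(\gamma)\bigr)$, exactly as noted in the discussion following the first theorem of the introduction.

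With this reduction in hand, the key observation is that for each fixed $\gamma$ the function $t \mapsto 1/\alpha_{\rho_t}(\gamma)$ is convex on $[0,1]$: indeed $\alpha_{\rho_t}(\gamma)$ is affine and positive in $t$ (positivity by Remark~\ref{Rem: Positive}, since each $\rho_t$ is a Margulis spacetime), and the reciprocal of a positive affine function is convex. Hence $t \mapsto \alpha_\rho(\gamma)/\alpha_{\rho_t}(\gamma)$ is convex and positive for each $\gamma$. The supremum $t \mapsto \sup_{[\gamma]} \alpha_\rho(\gamma)/\alpha_{\rho_t}(\gamma)$ is therefore a pointwise supremum of convex functions, hence convex (and it takes values in $(0,\infty]$). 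Finally, composing with $\log$, which is increasing and concave, does \emph{not} a priori preserve convexity --- so the last step requires a little more care: I would instead argue directly that $\log$ of a supremum of functions each of which is the logarithm of a convex function is convex, i.e. that $t\mapsto \log\bigl(\alpha_\rho(\gamma)/\alpha_{\rho_t}(\gamma)\bigr) = \log\alpha_\rho(\gamma) - \log\alpha_{\rho_t}(\gamma)$ is convex for each $\gamma$ (this is part $(2)$ of Theorem~\ref{Theorem:Convexity intro}, the convexity of $t\mapsto -\log\alpha_{\rho_t}(\gamma)$, which follows since $\alpha_{\rho_t}(\gamma)$ is positive and affine, hence log-concave), and then take the supremum over $\gamma$ of these convex functions.

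So the proof structure I would write is: (i) fix the linear part and record $\alpha_{\rho_t}(\gamma) = (1-t)\alpha_{\rho_0}(\gamma)+t\alpha_{\rho_1}(\gamma)$, noting positivity; (ii) invoke constancy of entropy to write $d_\Th(\rho,\rho_t) = \sup_{[\gamma]} \bigl(\log\alpha_\rho(\gamma) - \log\alpha_{\rho_t}(\gamma)\bigr)$; (iii) observe that for each fixed $\gamma$ the map $t \mapsto -\log\alpha_{\rho_t}(\gamma)$ is convex because $\alpha_{\rho_t}(\gamma)$ is a positive affine function of $t$ and $-\log$ of such a function is convex; (iv) conclude that $t\mapsto d_\Th(\rho,\rho_t)$, being a pointwise supremum over $[\gamma]\in[\Gamma_\tH]$ of convex functions of $t$, is convex on $[0,1]$ (allowing the value $+\infty$, which does not affect convexity).

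The main obstacle, or rather the main point requiring attention, is the interaction between the logarithm and the supremum: one must be careful not to claim that $\log$ of a supremum of convex functions is convex, which is false in general. The correct move is to bring the $\log$ inside the supremum first (valid since $\log$ is monotone) and then use that each $t\mapsto\log\bigl(\alpha_\rho(\gamma)/\alpha_{\rho_t}(\gamma)\bigr)$ is individually convex, which reduces everything to the elementary fact that the reciprocal---equivalently the negative logarithm---of a positive affine function is convex. A secondary, more routine point is to justify that the $\log$-convexity statement still makes sense when the supremum is $+\infty$ on part of $[0,1]$; this is handled by the standard convention that a convex function may take the value $+\infty$, and a pointwise supremum of such functions remains convex. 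Everything else is bookkeeping already carried out in Section~\ref{Section 3} and in the proof of Theorem~\ref{Th: fixed }.
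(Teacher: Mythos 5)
Your proposal is correct and follows essentially the same route as the paper: fix the linear part, use linearity of the Margulis invariant to get $\alpha_{\rho_t}(\gamma)=(1-t)\alpha_{\rho_0}(\gamma)+t\,\alpha_{\rho_1}(\gamma)$, drop the entropy factor because $h_{\rho_t}=k$ is constant, and reduce to an elementary convexity statement about positive affine functions of $t$. The only real difference is the final step: the paper writes $d_\Th(\rho_0,\rho_t)=-\log\bigl(\inf_{[\gamma]}\,(1+t(\alpha_{\rho_1}(\gamma)/\alpha_{\rho_0}(\gamma)-1))\bigr)$, notes that the infimum of affine functions is concave, and then checks convexity of $-\log f$ by a second-derivative computation; you instead move the logarithm inside the supremum (valid since $\log$ is increasing) and use that each $t\mapsto\log\alpha_{\rho}(\gamma)-\log\alpha_{\rho_t}(\gamma)$ is convex, together with the fact that a pointwise supremum of convex functions is convex. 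Your version is marginally cleaner: it sidesteps the implicit smoothness assumption in the paper's computation of $\varphi''$ (an infimum of infinitely many affine functions need not be differentiable; the paper's conclusion is still fine via the standard composition rule, convex decreasing after concave, but your argument never needs it), and your explicit caution that one may not simply apply $\log$ to a supremum of convex functions, plus the remark that values in $(0,\infty]$ cause no harm, are both correct and handled appropriately.
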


\begin{proof}
    Since $\rho_0, \rho_1, \rho_t$ are Margulis spacetimes, then by Remark~\ref{Rem: Positive}, the Margulis invariants $\alpha_{\rho_0}(\gamma), \alpha_{\rho_1}(\gamma), \alpha_{\rho_t}(\gamma)$ are positive for all $\gamma \in \Gamma$. Since Margulis invariant is linear in translational part, then $\alpha_{\rho_t}(\gamma) = (1-t)\alpha_{\rho_0}(\gamma + t \alpha_{\rho_1}(\gamma)$.
    We now note that 
    \[
    \alpha_{\rho_t}(\gamma) = \alpha_{\rho_0}(\gamma) \left[(1-t)+t\frac{\alpha_{\rho_1}(\gamma)}{\alpha_{\rho_0(\gamma)}}\right],
    \]
    which implies that 
    \[
    \frac{\alpha_{\rho_0}(\gamma)}{\alpha_{\rho_t}(\gamma)} = \frac{1}{1+t\left(\frac{\alpha_{\rho_1}(\gamma)}{\alpha_{\rho_0}(\gamma)}-1\right)}.
    \]
    Using Definition~\ref{Def:Asym}, we have 
    \begin{align*}
         d_\Th(\rho_0, \rho_t)&= \log\left(\sup_{[\gamma] \in [\Gamma_\tH]}\frac{\alpha_{\rho_1}(\gamma)}{\alpha_{\rho_0}(\gamma)}\right)\\
         = &\log\left(\sup_{[\gamma]\in [\Gamma_\tH]}\frac{1}{1+t\left(\frac{\alpha_{\rho_1}(\gamma)}{\alpha_{\rho_0}(\gamma)}-1\right)}\right)\\
         =& \log\left(\frac{1}{\inf_{[\gamma]\in [\Gamma_\tH]}1+t\left(\frac{\alpha_{\rho_1}(\gamma)}{\alpha_{\rho_0}(\gamma)}-1\right)}\right)\\
         =& -\log\left(\inf_{[\gamma]\in [\Gamma_\tH]} 1+ t \left(\frac{\alpha_{\rho_1}(\gamma)}{\alpha_{\rho_0}(\gamma)}-1\right)\right).
    \end{align*}
    Now consider the function $f(t)= \inf_{[\gamma]\in [\Gamma_\tH]} \left(1+ t \left(\frac{\alpha_{\rho_1}(\gamma)}{\alpha_{\rho_0}(\gamma)}-1\right)\right)$. Let for each conjugacy class, we define  $g_{\gamma}(t):= 1+ t \left(\frac{\alpha_{\rho_1}(\gamma)}{\alpha_{\rho_0}(\gamma)}-1\right)$. Now for each $\gamma$, $g_{\gamma}(t)$ is affine in $t$. The infimum of a family of affine functions in concave. Hence, $f(t)$ is concave on $[0,1]$. Now we see that $d_\Th(\rho_0, \rho_t) = -\log (f(t))$. Consider the function $\varphi(t) = -\log(f(t))$. Since $f(t) >0$ for $t \in [0,1]$ (as $\frac{\alpha_{\rho_1}(\gamma)}{\alpha_{\rho_0}(\gamma)}>0$), we compute 
    \[
    \varphi^\prime (t) = - \frac{f^\prime(t)}{f(t)}, \quad \varphi''(t) = -\frac{f''(t)f(t) - (f^\prime(t))^2}{f(t)^2} =  -\frac{f''(t)}{f(t)} + \left(\frac{f^\prime(t)}{f(t)}\right)^2.
    \]
    Since $f$ is concave, $f''(t)\le 0$, we get that $\varphi''(t) \ge 0$, so $\varphi$ is convex. Hence, the function $t \mapsto d_\Th(\rho_0, \rho_t)$ is convex on $[0,1]$.
\end{proof}

\begin{lemma}\label{Lemma:log}
 Let $\Gamma$ be a word hyperbolic group and let $\rho_0, \rho_t \in \M_k(\Gamma, \sG)$ be two Margulis spacetimes with constant entropy $k$.  Let $\rho_t = (\tL, (1-t)u_0 + tu_1)$ for $t \in [0,1]$, where $\tL$ is a fixed Anosov linear part and $u_0,u_1$  are translational parts such that $\rho_t \in \M(\Gamma, \sG)$ for all $t$.  Then the function  
 $$t \mapsto -\log \alpha_{\rho_t}$$ is convex on $[0,1]$.

 Let  $v = \frac{d}{dt}\big|_{t=0}\rho_t\rho_0^{-1} \in T_{\rho}\M_k(\Gamma, \sG)$ be the tangent vector at $\rho$. Then the function $$t \mapsto -\lVert v\rVert_\Th$$ is also convex on $[0,1]$.
\end{lemma}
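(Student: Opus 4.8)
The plan is to push both assertions through elementary one–variable calculus, in exactly the spirit of the proof of Theorem~\ref{Thm:convexity1}, after recording that along this segment the Margulis invariant is \emph{linear} in the parameter. Since the linear part $\tL$ is frozen, the neutral vector $\nu_{\tL}(\gamma^{+},\gamma^{-})$ appearing in $\alpha_{(\tL,u)}(\gamma)=\langle u(\gamma)\mid\nu_{\tL}(\gamma^{+},\gamma^{-})\rangle$ does not move with $t$, so for each infinite–order $\gamma$
\[
\alpha_{\rho_t}(\gamma)=(1-t)\,\alpha_{\rho_0}(\gamma)+t\,\alpha_{\rho_1}(\gamma),
\]
an affine function of $t$. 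As $\rho_0,\rho_1$ are Margulis spacetimes, Remark~\ref{Rem: Positive} (equivalently the Opposite Sign Lemma, Theorem~\ref{Opposite Sign Lemma}) gives $\alpha_{\rho_0}(\gamma),\alpha_{\rho_1}(\gamma)>0$, hence $\alpha_{\rho_t}(\gamma)>0$ on $[0,1]$ as a convex combination of positive numbers. For the first assertion I would then just note that $-\log$ of a positive affine function is convex — directly, $\tfrac{d^2}{dt^2}\bigl(-\log\alpha_{\rho_t}(\gamma)\bigr)=\bigl(\alpha_{\rho_1}(\gamma)-\alpha_{\rho_0}(\gamma)\bigr)^2/\alpha_{\rho_t}(\gamma)^2\ge 0$ — and that a pointwise supremum of convex functions is convex, so the conclusion survives taking $\sup_{[\gamma]\in[\Gamma_{\tH}]}$. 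I do not expect any difficulty here.

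For the second assertion the first step is to check that the path velocity
\[
v=\tfrac{d}{dt}\big|_{t=0}\,\rho_t\rho_0^{-1}=(0,\,u_1-u_0)
\]
defines a class lying in $T_{\rho_t}\M_k(\Gamma,\sG)$ for \emph{every} $t$, since the adjoint action on the translational directions of $\fk g^{\tL}\ltimes\bR^{2n+1}$ only sees the fixed linear part; hence $\lVert v\rVert_{\Th}$ may be evaluated at each base point $\rho_t$, a value I will write $\lVert v\rVert_{\Th,\rho_t}$. Using the constant–entropy form of Lemma~\ref{Lemma:Tgnt Vec} together with the linearity above — and observing that the numerator $\dot\alpha_{\rho_t}(\gamma)(v)=\alpha_{\rho_1}(\gamma)-\alpha_{\rho_0}(\gamma)$ is independent of $t$ — one gets
\[
\lVert v\rVert_{\Th,\rho_t}=\sup_{[\gamma]\in[\Gamma_{\tH}]}\frac{\alpha_{\rho_1}(\gamma)-\alpha_{\rho_0}(\gamma)}{\alpha_{\rho_t}(\gamma)}=\sup_{[\gamma]\in[\Gamma_{\tH}]}\frac{x_\gamma}{1+t\,x_\gamma},
\]
where $x_\gamma:=\alpha_{\rho_1}(\gamma)/\alpha_{\rho_0}(\gamma)-1$ and $\alpha_{\rho_t}(\gamma)=\alpha_{\rho_0}(\gamma)(1+t\,x_\gamma)>0$. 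The map $x\mapsto x/(1+tx)$ is strictly increasing and continuous wherever $1+tx>0$, so the supremum collapses to the single extremal value $X:=\sup_{[\gamma]}x_\gamma=e^{\,d_{\Th}(\rho_1,\rho_0)}-1$, which is finite and $\ge 0$ (nonnegativity is the non-negativity of $d_{\Th}$, Corollary~\ref{Thm:Thasm}), giving the closed form $\lVert v\rVert_{\Th,\rho_t}=X/(1+tX)$. Since $1+tX$ is a positive affine function of $t$ on $[0,1]$ and $\tfrac{d}{dt}\log(1+tX)=X/(1+tX)$, the asserted convexity of $t\mapsto-\lVert v\rVert_{\Th}$ then follows from precisely the second–derivative computation carried out for Theorem~\ref{Thm:convexity1}, i.e.\ from the concavity of $t\mapsto\log(1+tX)$. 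As an alternative I would record that, by Corollary~\ref{Coro:derivative of thurston metric}, $\lVert v\rVert_{\Th,\rho_t}$ is the derivative at $\tau=0$ of $\tau\mapsto d_{\Th}(\rho_t,\rho_{t+\tau})$ for the segment restarted at $\rho_t$, which routes the claim back through Theorem~\ref{Thm:convexity1} directly.

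The step I expect to need the most care is the reduction from the supremum over the infinitely many conjugacy classes $[\gamma]\in[\Gamma_{\tH}]$ to the single quantity $X$: this rests on monotonicity and continuity of the linear–fractional map $x\mapsto x/(1+tx)$ on the range $\{x_\gamma\}\subset(-1,\infty)$, together with the finiteness of $X$ — which is exactly the finiteness of $d_{\Th}(\rho_1,\rho_0)$, the implicit non-degeneracy hypothesis on the segment. Everything downstream of that reduction is the same elementary calculus already used for Theorem~\ref{Thm:convexity1}, fed by the positivity and linearity of the Margulis invariant recalled in the first paragraph.
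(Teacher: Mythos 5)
Your treatment of the first assertion is correct and is essentially the paper's own argument: with the linear part frozen, $\alpha_{\rho_t}(\gamma)=(1-t)\alpha_{\rho_0}(\gamma)+t\alpha_{\rho_1}(\gamma)$ is a positive affine function of $t$, and the second-derivative computation for $-\log$ of such a function gives convexity for each $\gamma$ (and survives the supremum). No issue there.

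The second assertion is where the proposal has a genuine gap. Your closed form $\lVert v\rVert_{\Th,\rho_t}=\frac{X}{1+tX}$ with $X=\sup_{[\gamma]}\bigl(\alpha_{\rho_1}(\gamma)/\alpha_{\rho_0}(\gamma)-1\bigr)\ge 0$ is correctly derived (the monotone collapse of the supremum is fine, granted finiteness of $d_{\Th}$), but the last inference is a non sequitur: concavity of $t\mapsto\log(1+tX)$ only says that its derivative $\frac{X}{1+tX}=\lVert v\rVert_{\Th,\rho_t}$ is non-increasing, i.e.\ that $t\mapsto-\lVert v\rVert_{\Th,\rho_t}$ is non-decreasing — monotonicity, not convexity. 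Worse, your own formula decides the question the other way: $\frac{d^{2}}{dt^{2}}\Bigl(-\frac{X}{1+tX}\Bigr)=-\frac{2X^{3}}{(1+tX)^{3}}\le 0$, so under your moving-basepoint reading the function is \emph{concave}, strictly so whenever $X>0$, and convex only in the degenerate case $X=0$. So the argument as written does not establish the stated conclusion, and the explicit reduction to $X$ cannot be patched to yield it under that reading. Note also the interpretational tension: the statement literally fixes $v\in T_{\rho_0}\M_k(\Gamma,\sG)$ and its norm at $\rho_0$, under which reading $t\mapsto-\lVert v\rVert_{\Th}$ is constant in $t$ and there is nothing to prove; the paper's own (one-line) proof instead deduces the claim from the first part together with Theorem~\ref{Theorem:Properties of Finsler norm}, i.e.\ the representation of the norm as a supremum of logarithmic derivatives, and never passes through your collapse to the extremal value $X$. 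Either way, your write-up does not prove the second part: the final step is invalid, and your computation in fact exhibits concavity rather than convexity whenever $X>0$.
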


\begin{proof}
    Since $\rho_t = (\tL, u_t)$ with fixed linear part $\tL$ and $u_t = (1 - t)u_0 + t u_1$, we have $\alpha_{\rho_t}(\gamma) = (1 - t)\alpha_{\rho_0}(\gamma) + t \alpha_{\rho_1}(\gamma)$. Thus, $\alpha_{\rho_t}$ is an affine function in $t$, and strictly positive for all $\gamma \in \Gamma$. Define the function $f_{\gamma}(t) := -\log \alpha_{\rho_t}(\gamma)$, whose derivative is
\[
f_{\gamma}'(t) = -\frac{d}{dt} \log \left( (1 - t)\alpha_{\rho_0}(\gamma) + t \alpha_{\rho_1}(\gamma) \right)
= \frac{\alpha_{\rho_0}(\gamma) - \alpha_{\rho_1}(\gamma)}{(1 - t)\alpha_{\rho_0}(\gamma) + t \alpha_{\rho_1}(\gamma)}.
\]
Let $a := \alpha_{\rho_0}(\gamma) > 0$, $b := \alpha_{\rho_1}(\gamma) > 0$. Then
\[
f_{\gamma}'(t) = \frac{a - b}{(1 - t)a + tb }, \quad
f_{\gamma}''(t) 
= \frac{(a - b)^2}{((1 - t)a + tb)^2 } \ge 0.
\]
Thus, $f_{\gamma}'(t) $ is convex in $t$. Therefore, the function $t\mapsto -\log\alpha_{\rho_t}$ in convex on $[0,1]$. 

The convexity of $t \mapsto -\lVert v\rVert_\Th$ is an immediate consequence of the above and  Theorem~\ref{Theorem:Properties of Finsler norm}.
\end{proof}

\subsection*{Acknowledgments}
The authors thank Sourav Ghosh for valuable comments and suggestions on this paper. 

Much of this work was completed while Gongopadhyay was visiting the Institut des Hautes Études Scientifiques (IHES). He gratefully acknowledges the hospitality and support of IHES during the course of this work.

\end{document}